\declaretheorem[numberwithin=section]{theorem}
\declaretheorem[sibling=theorem]{proposition}
\declaretheorem[sibling=theorem]{definition}
\declaretheorem[sibling=theorem]{corollary}
\declaretheorem[sibling=theorem]{lemma}
\declaretheorem[sibling=theorem]{notation}
\declaretheorem[sibling=theorem,name=Theorem / Definition]{thmdef}
\declaretheorem[sibling=theorem,style=remark]{remark}
\declaretheorem[sibling=theorem,style=remark]{example}
\numberwithin{equation}{section} 
\def\R{\mathbb R}
\def\C{\mathbb C}
\def\Z{\mathbb Z}
\def\N{\mathbb N}
\def\P{\mathbb P}
\def\E{\mathbb E}
\def\Var{\mathrm{Var}}
\def\Cov{\mathrm{Cov}}
\def\u{\mathfrak{u}}
\def\gl{\mathfrak{gl}}
\def\deg{\mathrm{deg}}
\def\nor{\mathrm{nor}}
\def\1{\mathbbm 1}
\def\d{\delta}
\def\p{\varphi}
\def\ex{\varepsilon}
\def\t{\tau}
\def\M{\mathbb{M}}
\def\U{\mathbb{U}}
\def\GL{\mathbb{GL}}
\def\A{\mathscr{A}}
\def\EX{\mathscr{E}}
\def\PP{\mathscr{P}}
\def\D{\mathcal{D}}
\def\L{\mathcal{L}}
\def\CC{\mathcal{C}}
\def\u{\mathfrak{u}}
\def\tensor{\otimes}
\def\del{\partial}
\newcommand{\tr}{\mathrm{tr}}
\newcommand{\Tr}{\mathrm{Tr}}
\newcommand{\mx}[1]{\mathbf{#1}}
\renewcommand\emptyset{\varnothing}
\begin{document}

\title{The Large-$N$ Limits of Brownian Motions on $\mathbb{GL}_N$}
\author{Todd Kemp\thanks{Supported by NSF CAREER Award DMS-1254807} \\
Department of Mathematics \\
University of California, San Diego \\
La Jolla, CA 92093-0112 \\
\texttt{tkemp@math.ucsd.edu}
}

\date{\today}

\maketitle

\begin{abstract} We introduce a two-parameter family of diffusion processes $(B_{r,s}^N(t))_{t\ge 0}$, $r,s>0$, on the general linear group $\mathbb{GL}_N$ that are Brownian motions with respect to certain natural metrics on the group.  At the same time, we introduce a two-parameter family of free It\^o processes $(b_{r,s}(t))_{t\ge 0}$ in a faithful, tracial $W^\ast$-probability space, and we prove that the full process $(B^N_{r,s}(t))_{t\ge 0}$ converges to $(b_{r,s}(t))_{t\ge 0}$ in noncommutative distribution as $N\to\infty$ for each $r,s>0$.  The processes $(b_{r,s}(t))_{t\ge 0}$ interpolate between the free unitary Brownian motion when $(r,s)=(1,0)$, and the free multiplicative Brownian motion when $r=s=\frac12$; we thus resolve the open problem of convergence of the Brownian motion on $\mathbb{GL}_N$ posed by Biane in \cite{Biane1997c}.   \end{abstract}

\tableofcontents

\section{Introduction\label{Section Introduction}}

Let $\M_N$ denote the space of $N\times N$ complex matrices, and let $\GL_N$ denoted the Lie group of invertible matrices in $\M_N$; its Lie algebra is the full matrix algebra $\gl_N = \M_N$.  The Lie algebra $\gl_N$ possesses no $\mathrm{Ad}(\GL_N)$-invariant inner product.  By contrast, the Lie group $\U_N=\{U\in\M_N\colon UU^\ast=I_N\}$ of unitary matrices in $\M_N$ is compact, and the Hilbert-Schmidt inner product $\langle \xi,\eta\rangle = -\Tr(\xi\eta)$ is $\mathrm{Ad}(\U_N)$-invariant on the Lie algebra $\u_N =\{\xi\in\M_N\colon \xi^\ast=-\xi\}$.  (If we restrict to $\mathfrak{su}_N$, this is the the {\em unique} $\mathrm{Ad}(\mathbb{SU}_N)$-invariant inner product, up to scale.)  In fact, the Hilbert-Schmidt complex inner product $\langle \xi,\eta\rangle = \Tr(\xi\eta^\ast)$ on $\gl_N$ is also invariant under conjugation by elements of $\U_N$.  

The group $\GL_N$ is the complexification of $\U_N$, which is to say that the Lie algebras satisfy $\gl_N = \u_N\oplus i\u_N$.  Both of the complementary real subspaces $\u_N$ (skew-Hermitian matrices) and $i\u_N$ (Hermitian matrices) are invariant under conjugation by elements of $\U_N$.  It follows immediately that the following {\em real} inner products are all $\mathrm{Ad}(\U_N)$-invariant.

\begin{definition} Let $r,s>0$.  Define the real inner product $\langle\cdot,\cdot\rangle_{r,s}$ on $\gl_N$ by
\begin{equation} \label{e.innprodrs} \langle \xi_1+i\eta_1,\xi_2+i\eta_2\rangle^N_{r,s} = -\frac1r N\Tr(\xi_1\xi_2) -\frac1s N\Tr(\eta_1\eta_2), \qquad \xi_1,\xi_2,\eta_1,\eta_2\in\u_N. \end{equation}
That is: $\langle \cdot,\cdot\rangle^N_{r,s}$ makes $\u_N$ and $i\u_N$ orthogonal, and its restrictions to these two orthocomplementary subspaces are positive scalar multiples of the Hilbert-Schmidt inner product.
\end{definition}

\begin{remark} The inner product $\langle\cdot,\cdot\rangle^N_{r,s}$ may alternatively be written in the form
\[ \langle A,B\rangle = \frac12\left(\frac1s+\frac1r\right)N\Re\Tr(AB^\ast) + \frac12\left(\frac1s-\frac1r\right)N\Re\Tr(AB). \]
We scale with $N\Tr$ in order to produce a meaningful limit as $N\to\infty$.
\end{remark}

Any real inner product on $\gl_N$ gives rise to a left-invariant Riemannian metric on $\GL_N$, and hence to a left-invariant Laplace-Beltrami operator, and associated diffusion process: the Brownian motion.

\begin{definition} \label{d.BM0} Let $r,s>0$.  Let $\Delta_{r,s}^N$ denote the Laplace-Beltrami operator on $\GL_N$ associated to the left-invariant Riemannian metric induced by the inner product $\langle\cdot,\cdot\rangle^N_{r,s}$.  The Markov diffusion $B^N_{r,s}(t)$ on $\GL_N$, started at $B^N_{r,s}(0)=I_N$, with generator $\frac12\Delta^N_{r,s}$, is called a {\bf $\U_N$-invariant Brownian motion}.  Fix a probability space ($\Omega,\mathscr{F},\mathbb{P})$ from which the random matrices $B^N_{r,s}(t)$ are sampled, and denote by $\E = \int_\Omega\cdot\,d\mathbb{P}$.
\end{definition}

\begin{remark} \label{r.UNinvB} Since the inner product is $\mathrm{Ad}_{\U_N}$-invariant, its Laplace operator is also unitarily invariant.  That is: for $f\in C^\infty(\GL_N)$ and $U\in \U_N$, let $(\mathrm{Ad}_U^\ast f)(A) = f(\mathrm{Ad}_UA)$; then $\Delta_{r,s}(\mathrm{Ad}_U^\ast f) = \Delta_{r,s} f$ for all $U\in \U_N$.  It follows that the law of the Brownian motion $B_{r,s}^N(t)$ (the heat kernel) is also invariant under the $\mathrm{Ad}^\ast$-action of $\U_N$; hence, it is appropriate to call it $\U_N$-invariant Brownian motion.
\end{remark}

For convenience, we now fix a (large) probability space $(\Omega,\mathscr{F},\P)$ on which all of the random matrices $\{B^N_{r,s}(t)\colon r,s>0,t\ge 0,N\in\N\}$ live.  As usual, for random variables $F$ on $(\Omega,\mathscr{F}$), we denote $\int_\Omega F\,d\P = \E(F)$.  We will characterize the large-$N$ limit of $B^N_{r,s}(t)$ as a noncommutative stochastic process.  To do so, we introduce the following free stochastic processes (for a discussion of free stochastic calculus, see Section \ref{section free stochastic calculus}).

\begin{definition} \label{d.fibm} Fix $r,s\ge 0$.  Let $(\A,t)$ be a $W^\ast$-probability space that contains two freely independent free semicircular Brownian motions $x(t),y(t)$.  Let
\begin{equation} \label{e.w} w_{r,s}(t) = i\sqrt{r}\,x(t) + \sqrt{s}\,y(t). \end{equation}
The {\bf free multiplicative Brownian motion} of parameters $r,s$, denoted $b_{r,s}(t)$, is the unique solution to the following free stochastic differential equation (fSDE):
\begin{equation} \label{e.fSDE0} db_{r,s}(t) = b_{r,s}(t)\,dw_{r,s}(t) - \frac12(r-s)b_{r,s}(t)\,dt, \quad b_{r,s}(0) = 1. \end{equation}
\end{definition}

Let $\tr$ denote the {\em normalized} trace, $\tr = \frac1N\Tr$ on $\M_N$. The main theorem of this paper is as follows; it is proved in Section \ref{section Convergence of the Brownian Motions}.

\begin{theorem} \label{t.main} For $r,s>0$, the Brownian motion $(B^N_{r,s}(t))_{t\ge 0}$ on $\GL_N$ converges, as a noncommutative stochastic process, to $(b_{r,s}(t))_{t\ge 0}$ as $N\to\infty$.  That is to say: if $n\in\N$, $t_1,\ldots,t_n\ge 0$, and $\ex_1,\ldots,\ex_n\in\{1,\ast\}$, then
\[ \lim_{N\to\infty} \E\tr\!\left(B^N_{r,s}(t_1)^{\ex_1} \cdots B^N_{r,s}(t_n)^{\ex_n}\right) = \t\!\left(b_{r,s}(t_1)^{\ex_1} \cdots b_{r,s}(t_n)^{\ex_n}\right). \]
\end{theorem}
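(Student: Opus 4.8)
The plan is to follow the by-now standard route for convergence of matrix Brownian motions to free limits: realize $B^N_{r,s}$ as the solution of an It\^o equation on $\M_N$, extract from It\^o's formula and the Lie-algebra ``magic formulas'' a hierarchy of ODEs for the expected normalized-trace moments, control the fluctuations of the traces so that this hierarchy closes in the limit, and identify the limiting hierarchy with the one satisfied by $b_{r,s}$. To set up the finite-dimensional SDE, note that if $\{X_k\}_{k=1}^{N^2}$ is an orthonormal basis of $(\u_N,-N\Tr)$ then $\{\sqrt r\,X_k\}\cup\{\sqrt s\,iX_k\}$ is an orthonormal basis of $(\gl_N,\langle\cdot,\cdot\rangle^N_{r,s})$, $\Delta^N_{r,s}$ is the sum of second left-invariant derivatives along these vectors, and converting the resulting Stratonovich equation to It\^o form (using $\sum_k X_k^2 = -I_N$, the value of the $\u_N$-Casimir in this normalization) gives
\[ dB^N_{r,s}(t) = B^N_{r,s}(t)\,dW^N_{r,s}(t) - \tfrac12(r-s)\,B^N_{r,s}(t)\,dt, \qquad B^N_{r,s}(0) = I_N, \]
where $W^N_{r,s} = i\sqrt r\,X^N + \sqrt s\,Y^N$ with $X^N,Y^N$ independent Hermitian Brownian motions normalized by $\E[\Tr(AX^N(t))\Tr(BX^N(t))] = \frac tN\Tr(AB)$; taking adjoints gives the companion equation for $(B^N_{r,s})^\ast$. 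The drift already matches the one in \eqref{e.fSDE0}, and $(r,s)=(1,0)$ recovers Brownian motion on $\U_N$.

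Next, for a word $W = B^N_{r,s}(t_1)^{\ex_1}\cdots B^N_{r,s}(t_n)^{\ex_n}$ I would apply It\^o's formula to $\tr(W)$ and take expectations: the martingale parts drop, the letter-drifts contribute $-\tfrac12 n(r-s)\,\E\tr(W)$, and each cross-correction between two letters pairs the relevant $dW^N$'s via the matrix covariations $dW^N\,A\,dW^N = -(r-s)\tr(A)I_N\,dt$ and $(dW^N)^\ast A\,dW^N = (r+s)\tr(A)I_N\,dt$. Differentiating in the largest occurring time (the earlier letters being frozen), this expresses $\frac{d}{dt}\E\tr(W)$ as a linear combination of $\E\tr(W)$ and of products $\E[\tr(W_1)\tr(W_2)]$ over the ways to split $W$ at two of its letters, with coefficients depending only on $r,s$ and on the adjacent exponents, and — crucially — with \emph{no} explicit $1/N$ corrections, exactly as in the free-unitary and free-multiplicative cases.

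The corresponding free It\^o computation for $\t$ of a word in $b_{r,s}(t),b_{r,s}(t)^\ast$, using $db_{r,s} = b_{r,s}\,dw_{r,s} - \tfrac12(r-s)b_{r,s}\,dt$ together with the free covariations $dw_{r,s}\,a\,dw_{r,s} = -(r-s)\t(a)\,dt$ and $dw_{r,s}^\ast a\,dw_{r,s} = (r+s)\t(a)\,dt$ — the $N\to\infty$ shadows of the matrix identities above — yields literally the same system, with the cross terms now honest products $\t(W_1)\t(W_2)$ and with the same initial data. Since that system is triangular in word length (a split produces strictly shorter words) with locally Lipschitz right-hand side, its solution is unique; so it remains only to show that the matrix moments converge to a solution of it.

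The main obstacle is precisely that the finite-$N$ hierarchy does not close, because of the factors $\E[\tr(W_1)\tr(W_2)]$: I need $\E[\tr(W_1)\tr(W_2)] = \E\tr(W_1)\,\E\tr(W_2) + O(N^{-2})$ locally uniformly in time. For this I would run the same machine on the centered covariances $\psi^N(W_1,W_2) = \E[\tr(W_1)\tr(W_2)] - \E\tr(W_1)\E\tr(W_2)$: the It\^o covariation $d\langle\tr(W_1),\tr(W_2)\rangle$ pairs a $dW^N$ from a letter of $W_1$ with one from a letter of $W_2$, and such a \emph{cross-trace} pairing carries a genuine factor $N^{-2}$ (the identity $\sum_k\tr(AX_k)\tr(X_kB) = -N^{-2}\tr(AB)$). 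Hence $\frac{d}{dt}\psi^N$ is a multilinear combination of other $\psi^N$'s plus an $O(N^{-2})$ term; since $\psi^N \equiv 0$ at $t=0$ and all coefficients are controlled by uniform-in-$N$ a priori bounds on $\E\tr\big((B^N_{r,s}(t)B^N_{r,s}(t)^\ast)^p\big)$ (obtained from the moment hierarchy, which is triangular in a suitable word-length grading and hence Gronwall-stable), Gronwall gives $\psi^N = O(N^{-2})$ on compact time intervals. Feeding this back, the moment ODEs converge to the limiting hierarchy above, whose unique solution is $\t(b_{r,s}(t_1)^{\ex_1}\cdots b_{r,s}(t_n)^{\ex_n})$; this is the theorem. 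Exhibiting a grading under which every sub-hierarchy is triangular, and keeping the $O(N^{-2})$ uniform, is where essentially all the work lies. (Alternatively, one could prove the one-time statement first — all $\ast$-moments of the single random matrix $B^N_{r,s}(t)$ converge to those of $b_{r,s}(t)$ — straight from the ODE hierarchy with all $t_i$ equal, and then deduce the multi-time statement from the multiplicative increment structure $B^N_{r,s}(t_j) = B^N_{r,s}(t_i)\cdot(\text{independent }\mathrm{Ad}_{\U_N}\text{-invariant factor})$, Voiculescu's asymptotic freeness, and the corresponding free-increment structure of $b_{r,s}$.)
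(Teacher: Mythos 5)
Your main route diverges from the paper's at the crucial technical step, and as written the concentration argument has a gap. You propose to show $\E[\tr(W_1)\tr(W_2)]-\E\tr(W_1)\E\tr(W_2)=O(N^{-2})$ by running an It\^o hierarchy on the covariances $\psi^N(W_1,W_2)$, noting that the cross quadratic covariation $d\langle\tr W_1,\tr W_2\rangle$ carries a factor $N^{-2}$ from the magic formula $\sum_\xi\tr(A\xi)\tr(\xi B)=-N^{-2}\tr(AB)$. That part is fine. But the drift of $\tr(W_1)$ itself contains split terms of the form $\tr(W_1')\tr(W_1'')$ coming from contractions \emph{within} $W_1$, and these contribute to $\frac{d}{dt}\psi^N(W_1,W_2)$ a term
\[
\E\bigl[\tr(W_1')\tr(W_1'')\tr(W_2)\bigr]-\E\bigl[\tr(W_1')\tr(W_1'')\bigr]\,\E\bigl[\tr(W_2)\bigr],
\]
the covariance of a \emph{product of two traces} against a single trace. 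Decomposing this into pairwise covariances leaves over the third classical cumulant of $\tr W_1',\tr W_1'',\tr W_2$, and iterating your scheme forces you to control all higher-order mixed cumulants of normalized traces. The pairwise system does not close, so Gronwall does not apply as stated. (Those higher cumulants are indeed small --- that is essentially the content of second-order freeness --- but establishing it is a substantial extra layer that your sketch does not supply.)

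The paper avoids the cumulant hierarchy entirely for the concentration step. Theorems \ref{t.intertwine0} and \ref{t.intertwine1} show that $\Delta^{j,N}_{r,s}$ intertwines the evaluation map $P\mapsto P_N$ on abstract trace polynomials $P\in\PP(J)$ with an operator $\D^j_{r,s}+\tfrac1{N^2}\L^j_{r,s}$ preserving the finite-dimensional spaces $\PP_d(J)$ of bounded trace degree. The limiting operator $\D^{\mx t}_{r,s}$ is first order, so $e^{\D^{\mx t}_{r,s}}$ is an algebra homomorphism and covariances vanish in the limit; the $O(N^{-2})$ rate then falls out of the finite-dimensional perturbation bound $\|e^{D+\epsilon L}-e^{D}\|\le C|\epsilon|$ (Lemma \ref{l.findim}, Proposition \ref{p.conc1}, Theorem \ref{t.cov0}) uniformly over all trace polynomials of bounded degree, with no cumulant bookkeeping at all. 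The multivariate version (Theorem \ref{t.multi-cov}) also supplies asymptotic freeness of the independent samples via the fluctuation criterion of Theorem \ref{t.asympfree} (Corollary \ref{c.asympfree2}).

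Your parenthetical alternative --- single-time convergence from the one-time moment ODE, then multiplicative increments plus asymptotic freeness --- is the paper's actual proof architecture (Theorem \ref{t.main-fixed-t}, then Section \ref{subsection Asymp Freeness} via Propositions \ref{p.B2} and \ref{p.b2}, Lemmas \ref{l.time1}--\ref{l.freeness}, and Corollary \ref{c.asympfree2}). Note, though, that ``Voiculescu's asymptotic freeness'' cannot be invoked off the shelf: the increments of $B^N_{r,s}$ are not Haar unitaries, so $\U_N$-invariance alone does not give freeness --- one must verify the $O(N^{-2})$ fluctuation hypothesis (condition (3) of Theorem \ref{t.asympfree}), which is again the concentration estimate. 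So this route has the same missing ingredient. Replacing your covariance hierarchy with the intertwining/perturbation machinery would make either of your routes rigorous, and that substitution is precisely what the paper does.
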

\noindent This theorem resolves a conjecture left open by Biane in \cite{Biane1997c}.  Indeed, let $G^N(t) = B^N_{1/2,1/2}(t)$, and let $g(t) = b_{1/2,1/2}(t)$; then (\ref{e.fSDE0}) becomes
\begin{equation} \label{e.fmbm} dg(t) = g(t)\,dw(t), \qquad g(0)=1, \end{equation}
where $w(t) = (y(t)+ix(t))/\sqrt{2}$ is a free circular Brownian motion.  The process $g(t)$ is referred to as {\em free multiplicative Brownian motion} in \cite{Biane1997c,Biane1997b}, where it was conjectures that $(G^N(t))_{t\ge0}$ converges to $(g(t))_{t\ge 0}$ as a noncommutative stochastic process.  Recent progress on this conjecture was made by Guillaume C\'ebron in  \cite[Theorem 4.6]{Guillaume2013}, where he showed that, for each fixed $t\ge 0$, the random matrix $G^N(t)$ converges in noncommutative distribution to $g(t)$. At the same time, the present author in \cite{Kemp2013a} independently proved that, for each fixed $t\ge 0$, the empirical  noncommutative distribution of the random matrix $B^N_{r,s}(t)$ converges almost surely to a linear functional $\p_{r,s}(t)\colon\C\langle X,X^\ast\rangle\to\C$, which is the noncommutative distribution of an operator in a tracial noncommutative probability space;
it was left open whether the trace is faithful. Theorem \ref{t.main} resolves this question as well.  Our present techniques are quite different from those in \cite{Guillaume2013,Kemp2013a}.

\begin{remark} We may also consider the ``special case'' $(r,s)=(1,0)$.  Let $u(t) = b_{1,0}(t)$; then (\ref{e.fSDE0}) becomes
\begin{equation} \label{e.fubm} du(t) = iu(t)\,dx(t) -\frac12u(t)\,dt, \qquad u(0)=1 \end{equation}
which is the fSDE for the (left) {\em free unitary Brownian motion}, introduced in \cite{Biane1997c}.  The main theorem \cite[Theorem 1]{Biane1997c} of that paper was the convergence of the Brownian motion $(U^N(t))_{t\ge 0}$ on $\U_N$ (with respect to to the inner product $-N\Tr(\xi\eta)$) to $(u(t))_{t\ge 0}$.  Some of the ideas we present here are motivated by this example.\end{remark}

In order to prove Theorem \ref{t.main}, we need to describe more concretely the noncommutative distribution of $b_{r,s}(t)$; to that end, we introduce the following indispensable constants.

\begin{thmdef}[\cite{Bercovici1992,Biane1997c}] \label{d.fubm} For each $t\in\R$, there exists a unique probability measure $\nu_t$ on $\C^\ast=\C\setminus\{0\}$ with the following properties.  For $t>0$, $\nu_t$ is supported in the unit circle $\U$; for $t<0$, $\nu_t$ is supported in $\R_+=(0,\infty)$; and $\nu_0=\delta_1$.  In all cases, $\nu_t$ is determined by its moments: $\nu_0(t)\equiv 1$ and, for $n\in\Z\setminus\{0\}$,
\begin{equation} \label{e.nuNu0} \nu_n(t) \equiv \int_{\C^\ast} u^n\,\nu_t(du) = e^{-\frac{|n|}{2}t}\sum_{k=0}^{|n|-1} \frac{(-t)^k}{k!}|n|^{k-1}\binom{|n|}{k+1}. \end{equation}
\end{thmdef}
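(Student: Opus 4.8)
The plan is to prove existence and uniqueness of $\nu_t$ separately in the three regimes $t=0$, $t>0$, $t<0$, in each of which the measure has a different nature. I note first that for every fixed $n\in\Z$ the right-hand side of \eqref{e.nuNu0} is an entire function of $t$ (a polynomial in $t$ times $e^{-|n|t/2}$), depending on $n$ only through $|n|$; the regimes $t>0$ and $t<0$ are thus tied together by analytic continuation, and I will exploit this. The case $t=0$ is immediate: at $t=0$ only the $k=0$ summand of \eqref{e.nuNu0} survives, equal to $|n|^{-1}\binom{|n|}{1}=1$, so $\delta_1$ has all the prescribed moments, and it is obviously the unique probability measure on $\C^\ast$ every moment of which equals $1$.

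For $t>0$ I would identify $\nu_t$ with the spectral distribution of the free unitary Brownian motion $u(t)=b_{1,0}(t)$ solving \eqref{e.fubm}. A short application of the free It\^o formula of Section \ref{section free stochastic calculus} gives $d\!\left(u(t)u(t)^\ast\right)=0$, so $u(t)$ is unitary and $\nu_t$ is supported on the compact group $\U$; probability measures on $\U$ are determined by their (trigonometric) moments, which yields uniqueness. For the explicit formula, apply the free It\^o formula to $u(t)^n$, take the normalized trace, and use traciality together with the product rule $\tau(a\,dx\,b\,dx)=\tau(a)\tau(b)\,dt$ to reduce to the closed, triangular system (each $\nu_n'$ depending only on $\nu_1,\dots,\nu_n$)
\[ \frac{d}{dt}\,\nu_n(t) = -\frac{n}{2}\,\nu_n(t) - \sum_{m=1}^{n-1}(n-m)\,\nu_m(t)\,\nu_{n-m}(t), \qquad \nu_n(0)=1, \]
which has a unique solution; one then checks by induction on $n$ (or by passing to the Burgers-type PDE for $\sum_{n\ge1}\nu_n(t)z^n$, solvable by characteristics, and inverting by the Lagrange formula, which is where the binomial coefficient in \eqref{e.nuNu0} comes from) that the expression in \eqref{e.nuNu0} is that solution. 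Alternatively, this entire regime is \cite{Biane1997c}.

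For $t<0$, set $\sigma=-t>0$. The crux is \emph{existence}: one must produce a probability measure on $(0,\infty)$ whose integer moments are the $\nu_n(-\sigma)$, and positivity of the corresponding Hankel forms is not apparent from the formula. My preferred route is the Bercovici--Voiculescu theory of $\boxtimes$-infinite divisibility \cite{Bercovici1992}: since free unitary Brownian motion has free multiplicative increments, $(\nu_t)_{t>0}$ is a $\boxtimes$-convolution semigroup, so $\Sigma_{\nu_t}=\exp(t\,\Phi)$ for a fixed free multiplicative L\'evy exponent $\Phi$; the function $\exp(t\,\Phi)=\exp\!\left(\sigma\,(-\Phi)\right)$ remains, for $t<0$, the $\Sigma$-transform of a $\boxtimes$-infinitely divisible probability measure, now supported in $\R_+$, and I would take $\nu_t$ to be that measure. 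Its positive moments equal $\nu_n(-\sigma)$ by analytic continuation from the $t>0$ regime, and the consistency of the negative moments (equivalently, invariance of $\nu_t$ under $u\mapsto u^{-1}$ at the level of moments) is a further feature of the Bercovici--Voiculescu construction that must be recorded. Two alternatives to the $\boxtimes$-route: realize $\nu_t$ as the law of an explicitly constructed positive free random variable, or verify the two relevant Hankel determinant sequences of $(\nu_n(-\sigma))_{n\ge0}$ directly. Uniqueness is then soft: bounding each summand of \eqref{e.nuNu0} by $\binom{|n|}{k+1}\le 2^{|n|}$ and $|n|^{k-1}\le|n|^{|n|}$ and using Stirling for $|n|^{|n|}/|n|!$ gives $\nu_n(-\sigma)\le C_\sigma^{\,|n|}$ for a constant $C_\sigma$, hence $\sum_{n\ge1}\nu_n(-\sigma)^{-1/(2n)}=\infty$, so Carleman's condition makes the Stieltjes moment problem for $\nu_t$ determinate.

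The main obstacle is precisely this existence statement for $t<0$: the formula \eqref{e.nuNu0} manifestly defines a real sequence, but its being a genuine (inversion-symmetric) Stieltjes moment sequence is not visible by inspection. I expect to settle it via the $\boxtimes$-semigroup and analytic-continuation argument, leaning on \cite{Bercovici1992}, rather than through a brute-force positivity computation, with \cite{Biane1997c} supplying the $t>0$ input and the $t=0$ and uniqueness parts being routine.
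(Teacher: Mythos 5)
The paper does not prove this statement at all: it is presented as a \texttt{thmdef} attributed to \cite{Bercovici1992,Biane1997c}, and the closest the paper comes is Lemma~\ref{l.identnu}, which records (without derivation, again citing \cite{Biane1997c} and \cite{DHK2013}) the ODE system for the $\varrho_n = e^{nt/2}\nu_n$. So there is no internal proof to compare against; you are reconstructing an argument the paper delegates to the literature. Your plan is in fact essentially the route those references take: for $t>0$, realize $\nu_t$ as the distribution of free unitary Brownian motion, obtain the triangular ODE system by free It\^o calculus and solve it (this is Biane's argument); for $t<0$, appeal to the Bercovici--Voiculescu $\Sigma$-transform/$\boxtimes$-semigroup machinery. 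Two small checks on the $t\ge 0$ part: $d(u(t)u(t)^\ast)=0$ gives $uu^\ast=1$, and one should say that finiteness of $\tau$ upgrades this to $u^\ast u=1$; and your ODE $\nu_n' = -\tfrac n2\nu_n - \sum_{m=1}^{n-1}(n-m)\nu_m\nu_{n-m}$ agrees with the paper's $\varrho_n' = -\sum k\varrho_k\varrho_{n-k}$ after substituting $\varrho_n = e^{nt/2}\nu_n$ and symmetrizing $m\leftrightarrow n-m$. The $t<0$ existence step --- that $\exp(t\Phi)$ remains a legitimate $\Sigma$-transform for $t<0$, with the measure now on $\R_+$ --- is exactly the nontrivial content of \cite{Bercovici1992}; you correctly flag it as the crux and lean on the citation rather than re-deriving it, which is acceptable.

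There is, however, a genuine error in your uniqueness argument for $t<0$. You bound $\binom{|n|}{k+1}\le 2^{|n|}$ and $|n|^{k-1}\le |n|^{|n|}$ and then invoke ``Stirling for $|n|^{|n|}/|n|!$,'' but there is no $|n|!$ in \eqref{e.nuNu0} to cancel the $|n|^{|n|}$: your bounds only give the super-exponential estimate $\nu_n(-\sigma) \lesssim e^{|n|\sigma/2}\,2^{|n|}\,|n|^{|n|}$, which does \emph{not} yield $\nu_n(-\sigma)\le C_\sigma^{|n|}$. The exponential bound is true (indeed $\nu_t$ is compactly supported in $(0,\infty)$ for $t<0$), but one has to keep the factorial structure together. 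For instance, from $\binom{|n|}{k+1}\le |n|^{k+1}/(k+1)!$ one gets each summand $\le \sigma^k |n|^{2k}/\bigl(k!(k+1)!\bigr)$, hence
\[
\nu_n(-\sigma) \;\le\; e^{|n|\sigma/2}\sum_{k\ge 0}\frac{(\sigma |n|^2)^k}{k!(k+1)!} \;=\; \frac{e^{|n|\sigma/2}}{|n|\sqrt{\sigma}}\,I_1\!\bigl(2|n|\sqrt{\sigma}\bigr) \;=\; O\!\left(e^{(\sigma/2+2\sqrt{\sigma})|n|}\right),
\]
which is exponential and does give $\sum_n \nu_n(-\sigma)^{-1/(2n)}=\infty$; with this correction the Carleman/Stieltjes determinacy argument goes through. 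You should also record the uniqueness mechanism for $t>0$ a bit more carefully: a probability measure on $\U$ is determined by $\{\nu_n\colon n\in\Z\}$, and here $\nu_{-n}=\overline{\nu_n}$ together with the manifest realness of \eqref{e.nuNu0} shows consistency with $\nu_n(t)=\nu_{-n}(t)$; for $t<0$ the analogous consistency is the $u\mapsto u^{-1}$ symmetry you mention.
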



\begin{theorem} \label{t.moments} Let $r,s,t\ge 0$ and $n\in\N$.  Then
\begin{align} \label{e.m1} &\t\left[b_{r,s}(t)^n\right] = \t\left[b_{r,s}(t)^{\ast n}\right] = \nu_n((r-s)t), \\ \label{e.m2}
&\t\left[(b_{r,s}(t)b_{r,s}(t)^\ast)^n\right] =  \nu_n(-4st), \\ \label{e.m3}
&\t\left[b_{r,s}(t)^2b_{r,s}(t)^{\ast 2}\right] = e^{4st} + 4st(1+st)e^{(3s-r)t}.
\end{align}
\end{theorem}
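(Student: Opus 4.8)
The plan is to compute the three families of mixed moments by applying the free stochastic calculus to the fSDE (\ref{e.fSDE0}) and deriving closed ODE systems for the relevant traces. The key observation is that $w_{r,s}(t) = i\sqrt{r}\,x(t)+\sqrt{s}\,y(t)$ has free Itô table $dw_{r,s}\,dw_{r,s} = (-r+s)\,dt = -(r-s)\,dt$ and $dw_{r,s}\,dw_{r,s}^\ast = (r+s)\,dt$, while $dw_{r,s}^\ast = -i\sqrt{r}\,dx + \sqrt{s}\,dy$ so that $dw_{r,s}^\ast\,dw_{r,s}^\ast = -(r-s)\,dt$ as well. Writing $b = b_{r,s}(t)$ and $c = r-s$ for brevity, the fSDE is $db = b\,dw_{r,s} - \tfrac12 c\,b\,dt$. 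For (\ref{e.m1}): apply the free Itô product rule to $b^n$. Because $x,y$ are free from the ``past'' of $b$ and the trace factorizes over the bracket terms using traciality and freeness (each bracket $db_j\,db_k$ with $j<k$ separated by powers of $b$ contributes $\t[(\text{stuff})]\cdot\t[db_j\,db_k]$ after centering the increment), one gets
\[ \frac{d}{dt}\t[b^n] = -\frac{c}{2}\,n\,\t[b^n] - \frac{c}{2}\sum_{\substack{j,k\ge 1 \\ j+k=n}} \t[b^j]\,\t[b^k]\cdot\#\{\text{pairings}\}, \]
and matching this recursion against the known ODE/recursion satisfied by $\nu_n(ct)$ from Theorem/Definition \ref{d.fubm} — which is exactly the one Biane derives for free unitary Brownian motion with time rescaled — identifies $\t[b^n] = \nu_n(ct)$. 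The identity $\t[b^{\ast n}] = \t[b^n]$ follows since $b^\ast$ satisfies the adjoint fSDE $db^\ast = dw_{r,s}^\ast\,b^\ast - \tfrac12 c\,b^\ast\,dt$ with the \emph{same} bracket constant $-(r-s) = -c$, hence the same recursion.

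For (\ref{e.m2}): set $p(t) = b_{r,s}(t)b_{r,s}(t)^\ast$, a positive operator. Apply the product rule: $dp = (db)b^\ast + b\,(db^\ast) + db\,db^\ast$. The drift terms $-\tfrac{c}{2}p\,dt$ and $-\tfrac{c}{2}p\,dt$ cancel against the bracket $db\,db^\ast = b\,dw_{r,s}\,dw_{r,s}^\ast\,b^\ast = (r+s)\,p\,dt$ only partially; more carefully one finds $dp = (db)b^\ast + b(db^\ast) + (r+s)p\,dt - c\,p\,dt = b\,dw_{r,s}\,b^\ast + b\,dw_{r,s}^\ast\,b^\ast + 2s\,p\,dt$. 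Then iterating the product rule on $p^n$ and taking the trace: the martingale parts vanish, and each interior bracket $dp_j\,dp_k$ between two copies of $p$ separated by products of $p$'s pulls out a trace factor $\t[p^a]\t[p^b]$ via freeness of the increments $dw_{r,s}, dw_{r,s}^\ast$ from the algebra generated by $b$ up to time $t$; the relevant bracket constant works out to $\t[(b\,dw_{r,s}\,b^\ast + b\,dw_{r,s}^\ast\,b^\ast)^{\cdot 2}/dt] $-type contractions that assemble into the \emph{same} combinatorial recursion as in the first case but with $c$ replaced by $-4s$. Hence $\t[p^n] = \nu_n(-4st)$, which is consistent with the fact that (for $s>0$) $-4st<0$, so $\nu_{-4st}$ is supported on $\R_+$, matching positivity of $p$. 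I expect the main obstacle to be bookkeeping the exact bracket constants and the interior-pairing combinatorics so that the ODE system literally coincides with Biane's; the cleanest route is probably to show the generating function $\sum_n \t[p^n]z^n$ satisfies the same PDE (a complex Burgers / characteristics equation) that characterizes $\sum_n \nu_n(-4st)z^n$, rather than matching recursions term by term.

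For (\ref{e.m3}): this single mixed moment $\t[b^2 b^{\ast 2}]$ is not a moment of $b$ alone nor of $p$ alone, so it must be handled by a direct closed ODE system. The strategy is to let $f(t) = \t[b^2 b^{\ast 2}]$ and apply the full multivariate free Itô formula to the word $b\,b\,b^\ast\,b^\ast$; its time derivative will involve $f$ itself together with lower-complexity traces such as $\t[b^2]\t[b^{\ast 2}] = \nu_2(ct)\nu_2(-ct) = \nu_2(ct)^2$ (using the first formula and its conjugate-time symmetry), $\t[b\,b^\ast]^2 = \nu_1(-4st)^2$, $\t[p^2] = \nu_2(-4st)$, $\t[b^2 b^\ast]\t[b^\ast]$, etc. — all of which are already known in closed form from (\ref{e.m1})–(\ref{e.m2}) and from analogous mixed-word computations at lower degree (e.g.\ $\t[b\,b\,b^\ast]$, obtainable by the same method). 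Assembling these as a forcing term yields a \emph{linear} first-order ODE $f'(t) = \alpha f(t) + (\text{known functions of }t)$ with $f(0)=1$, whose coefficients $\alpha$ and forcing involve only $r,s$ and exponentials $e^{\lambda t}$; solving it by the integrating factor $e^{-\alpha t}$ and simplifying produces $e^{4st} + 4st(1+st)e^{(3s-r)t}$. A useful sanity check is that at $s=r$ (so $c=0$) this should reduce to the fourth moment of the free unitary / multiplicative Brownian motion with parameter $r$, i.e.\ to $e^{4rt} + 4rt(1+rt)e^{2rt}$, which one can cross-check against \cite{Biane1997c} or \cite{Kemp2013a}. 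The hard part throughout is resolving the interior bracket terms of the form $\t[A\,db_j\,db_k\,B]$ when the separating words $A,B$ are nontrivial: one must argue that freeness of the semicircular increments from the filtration, combined with traciality, collapses each such term to $\t[A]\,\t[B]\cdot\t[db_j\,db_k]$ (or the appropriate signed sum when $j,k$ are adjacent), and one has to be careful that the word $b\,b\,b^\ast\,b^\ast$ has \emph{cyclically} adjacent $b^\ast b$ pairs, so the trace's cyclic invariance introduces bracket terms not present in the non-cyclic product rule.
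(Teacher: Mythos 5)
Your approach is essentially the same as the paper's: derive closed ODE recursions for the traces via the free It\^o product rule and the bracket constants of Lemma \ref{l.dwdw*}, match against the $\varrho_n/\nu_n$ recursion of Lemma \ref{l.identnu}, and handle $\t[b^2b^{\ast 2}]$ by a linear first-order ODE forced by the lower-order moments ($\t(bb^\ast)$, $\t(b^2b^\ast)$, $\t[(bb^\ast)^2]$) computed from (\ref{e.m1})--(\ref{e.m2}). The paper streamlines the inductions by the substitutions $a=e^{\frac12(r-s)t}b$ and $c=e^{-st}b$, which kill the drift and give the clean driftless SDEs $da = a\,dw$ and $d(cc^\ast)=2\sqrt{s}\,c\,dy\,c^\ast$, but working directly with $b$ and $p=bb^\ast$ as you propose is a valid alternative; one small correction to your hedge at the end: the free It\^o product rule applied to the word $bbb^\ast b^\ast$ already generates all the second-order cross terms, including the outermost $(1,4)$ pair $db\cdot bb^\ast\cdot db^\ast$, and taking $\t$ afterward does \emph{not} introduce any additional ``cyclic'' bracket terms, so that worry can be dropped (also, your sanity check at $s=r$ should reference free \emph{multiplicative}, not free unitary, Brownian motion).
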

\noindent Theorem \ref{t.moments} is proved in Section \ref{section Moment Calculations}.

\begin{remark} Equations (\ref{e.m1}) and (\ref{e.m2}) were proved in the author's paper \cite[Theorems 1.3 \& 1.5]{Kemp2013a}.  They are included here to show how they can be derived more directly from the limit process $b_{r,s}(t)$.  Equation (\ref{e.m3}) will be needed in the proof of Theorem \ref{t.brst} below.
\end{remark}

In Section \ref{section Properties of the Brownian Motions}, we demonstrate that the process $(b_{r,s}(t))_{t\ge 0}$ inherits all of the invariant properties from $B^N_{r,s}(t)$ that qualify it as a Brownian motion.

\begin{theorem} \label{t.brst} For $r,s>0$ and $N\in\N^\ast$, the $\GL_N$ Brownian motion $(B^N_{r,s}(t))_{t\ge 0}$ has independent, stationary multiplicative increments.  If $N\ge 2$, then, with probability $1$, $B^N_{r,s}(t)$ is not a normal matrix for any $t>0$.

For $r,s\ge 0$, the free multiplicative Brownian motion $(b_{r,s}(t))_{t\ge 0}$ is invertible for all $t\ge0$, and has freely independent, stationary multiplicative increments.  If $s=0$, then $u(t)$ is unitary, and $u(t)\equiv b_{r,0}(t/r)$ is a free unitary Brownian motion for any $r>0$.  If $s>0$, then $b_{r,s}(t)$ is not a normal operator for any $t>0$.
\end{theorem}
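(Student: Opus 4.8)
The claims fall into two groups: the finite-dimensional assertions about $B^N_{r,s}$, and the assertions about the free process $b_{r,s}$.

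For the finite-$N$ part, independence and stationarity of the multiplicative increments $B^N_{r,s}(u)^{-1}B^N_{r,s}(t)$, $u<t$, is the standard feature of a left-invariant Brownian motion on a Lie group: since $\langle\cdot,\cdot\rangle^N_{r,s}$ induces a \emph{left}-invariant metric, $\Delta^N_{r,s}$ commutes with left translations on $\GL_N$, so the transition kernel satisfies $p_t(gx,g\,dy)=p_t(x,dy)$, whence $B^N_{r,s}(u)^{-1}B^N_{r,s}(t)$ has the law of $B^N_{r,s}(t-u)$ and is independent of $\sigma(B^N_{r,s}(v):v\le u)$; equivalently one reads this off the left-invariant Stratonovich SDE $dB^N_{r,s}=\sum_j B^N_{r,s}\xi_j\circ dW^j$ driven by i.i.d. real Brownian motions $W^j$ along an orthonormal basis $\{\xi_j\}$ of $(\gl_N,\langle\cdot,\cdot\rangle^N_{r,s})$. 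For non-normality, when $N\ge 2$ the set $\mathcal N$ of normal matrices in $\GL_N$ is a finite union of smooth submanifolds of real codimension $\ge 2$ (the generic stratum, with distinct eigenvalues, is $(U(N)/\mathbb T^N)\times(\C^N\setminus\text{diagonals})/S_N$, of real dimension $N(N+1)$, hence codimension $N(N-1)\ge 2$). Since for $t>0$ the law of $B^N_{r,s}(t)$ has a smooth positive density against Haar measure on $\GL_N$, we get $\P(B^N_{r,s}(t)\text{ normal})=0$ for each fixed $t>0$; to upgrade this to \emph{all} $t>0$ simultaneously I would use that a countable union of submanifolds of codimension $\ge 2$ is polar for the non-degenerate diffusion $B^N_{r,s}$ (compare locally with Euclidean Brownian motion in the $2N^2\ge 2$ real dimensions of $\GL_N$), so a.s. the path never meets $\mathcal N$ at a positive time. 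This potential-theoretic step is the only genuinely non-routine point on the finite-$N$ side.

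Turning to $b_{r,s}$, write $w=w_{r,s}$ and $a=r-s$, and recall $dw\,dw=(s-r)\,dt=-a\,dt$. For invertibility I would introduce the process $c$ solving $dc=-(dw)\,c-\tfrac a2 c\,dt$, $c(0)=1$ (well posed by the theory of Section~\ref{section free stochastic calculus}), and apply the free It\^o product rule to $b_{r,s}c$: the martingale terms $b_{r,s}(dw)c$ and $-b_{r,s}(dw)c$ cancel, while the drift terms sum to $\big(-\tfrac a2-\tfrac a2+a\big)b_{r,s}c\,dt=0$, the last $+a$ coming from $(db_{r,s})(dc)=-b_{r,s}(dw\,dw)c=a\,b_{r,s}c\,dt$; hence $d(b_{r,s}c)=0$ and $b_{r,s}(t)c(t)=1$ for all $t$. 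Since $\A$ is a finite (tracial) von Neumann algebra, a one-sided inverse is two-sided, so $b_{r,s}(t)$ is invertible with inverse $c(t)$. For the increments, fix $u<t$ and put $\tilde b(v)=b_{r,s}(u)^{-1}b_{r,s}(u+v)$; then $d\tilde b(v)=\tilde b(v)\,d\tilde w(v)-\tfrac12(r-s)\tilde b(v)\,dv$, $\tilde b(0)=1$, with $\tilde w(v)=w(u+v)-w(u)$. Because $x$ and $y$ (hence $w$) have free increments, $\tilde w$ is a free copy of $w$ that is free from $W^\ast(w(v):v\le u)\ni b_{r,s}(u),b_{r,s}(u)^{-1}$; by uniqueness $\tilde b$ has the same law as $b_{r,s}$, and iterating over $0<t_1<\cdots<t_n$ the successive increments lie in the mutually free algebras $W^\ast(w(v)-w(t_{k-1}):t_{k-1}\le v\le t_k)$, yielding stationary, freely independent multiplicative increments.

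When $s=0$ we have $w_{r,0}=i\sqrt r\,x$, $a=r$, and $du=i\sqrt r\,u\,dx-\tfrac r2 u\,dt$, $du^\ast=-i\sqrt r\,dx\,u^\ast-\tfrac r2 u^\ast\,dt$ with $u=b_{r,0}$; applying the free It\^o product rule to $p=uu^\ast$, the terms $\pm i\sqrt r\,u\,dx\,u^\ast$ cancel, $(du)(du^\ast)=r\,u(dx\,dx)u^\ast=r\,p\,dt$, and the drifts give $dp=\big(-\tfrac r2-\tfrac r2+r\big)p\,dt=0$, so $u(t)u(t)^\ast=1$; combined with the invertibility above this forces $u(t)^\ast=u(t)^{-1}$, i.e. $u(t)$ is unitary. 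Brownian scaling (the fact that $x(rt)$ is again a free semicircular Brownian motion after dividing by $\sqrt r$) converts the fSDE for $b_{r,0}$ into that for $b_{1,0}$ run at speed $r$, so $b_{r,0}(t)$ has the law of $b_{1,0}(rt)$, hence $b_{r,0}(t/r)$ has the law of $b_{1,0}(t)=u(t)$, the free unitary Brownian motion of \eqref{e.fubm}. Finally, for $s>0$: $[b_{r,s}(t),b_{r,s}(t)^\ast]=b_{r,s}(t)b_{r,s}(t)^\ast-b_{r,s}(t)^\ast b_{r,s}(t)$ is self-adjoint, so by faithfulness of $\t$ the operator $b_{r,s}(t)$ is normal iff $\t\big([b_{r,s}(t),b_{r,s}(t)^\ast]^2\big)=0$. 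Expanding via the trace property,
\[ \t\big([b_{r,s}(t),b_{r,s}(t)^\ast]^2\big)=2\,\t\big[(b_{r,s}(t)b_{r,s}(t)^\ast)^2\big]-2\,\t\big[b_{r,s}(t)^2b_{r,s}(t)^{\ast 2}\big]. \]
By \eqref{e.m2} and \eqref{e.nuNu0}, $\t[(b_{r,s}(t)b_{r,s}(t)^\ast)^2]=\nu_2(-4st)=e^{4st}(1+4st)$ (since $\nu_2(\tau)=e^{-\tau}(1-\tau)$), while \eqref{e.m3} gives $\t[b_{r,s}(t)^2b_{r,s}(t)^{\ast 2}]=e^{4st}+4st(1+st)e^{(3s-r)t}$, so
\[ \t\big([b_{r,s}(t),b_{r,s}(t)^\ast]^2\big)=8st\,e^{(3s-r)t}\big(e^{(s+r)t}-1-st\big)>0 \]
for $s,t>0$ and $r\ge 0$, because $e^{(s+r)t}\ge e^{st}>1+st$. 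Hence $b_{r,s}(t)$ is not normal; this is exactly the use of identity \eqref{e.m3} flagged after Theorem~\ref{t.moments}. Apart from this moment computation, the only ingredient beyond routine free It\^o manipulation is the polarity argument for the finite-$N$ non-normality, which I expect to be the main obstacle.
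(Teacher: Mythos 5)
Your proposal is correct and follows essentially the same route as the paper: the SDE argument for increments, the codimension-$\ge 2$ dimension count plus a polarity theorem for non-degenerate diffusions (the paper cites Ramasubramanian's theorem where you argue by local comparison with Euclidean Brownian motion), the free It\^o computation $d(b_{r,s}c)=0$ for invertibility, and $\t([b,b^\ast]^2)=8st\,e^{3st}\bigl(e^{st}-(1+st)e^{-rt}\bigr)>0$ from (\ref{e.m2})--(\ref{e.m3}) for non-normality. The one minor departure is in the invertibility of $b_{r,s}(t)$: the paper carries out a second fSDE computation to verify $c\,b_{r,s}=1$ as well, whereas you instead invoke the fact that a one-sided inverse in a finite (tracial) von Neumann algebra is two-sided, which is a clean and perfectly valid shortcut.
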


\begin{remark} \label{r.B=U} We defined $B_{r,s}^N(t)$ only for $r,s>0$ (indeed, the inner product $\langle\cdot,\cdot\rangle^N_{r,s}$ blows up as $r\to 0$ or $s\to 0$).  In the case $s=0$, it is possible to make sense of $B_{r,0}^N(t)$ as the solution to the matrix SDE (\ref{e.mSDE-B}) below.  In this case, the process is degenerate on $\GL_N$; in fact, $B_{r,0}^N(t)\in\U_N$, and $U^N(t) = B^N_{r,0}(t/r)$ is Brownian motion on $\U_N$, as in the large-$N$ limit.
\end{remark}

The proof of Theorem \ref{t.main} has two main parts: first, we show that $B^N_{r,s}(t)$ converges to $b^N_{r,s}(t)$ in noncommutative distribution for each fixed $t\ge 0$.  We then use Theorem \ref{t.brst}: since the increments of $(b_{r,s}(t))_{t\ge 0}$ are freely independent, to prove convergence of the process it suffices to prove that the increments of $(B^N_{r,s}(t))_{t\ge 0}$ are {\em asymptotically free}.  The key to proving this property is the following multivariate extension of the technology in \cite[Sections 3 \& 4]{DHK2013}.

\begin{theorem} \label{t.multi-cov} Let $n\in\N$, $t_1,\ldots,t_n\ge 0$, and let $B^{1,N}_{r,s}(t_1),\ldots,B^{n,N}_{r,s}(t_n)$ be independent copies of the Brownian motion $B^N_{r,s}(\cdot)$ at these times.  These operators possess a limit joint distribution, and, for any noncommutative polynomials $f,g\in\C\langle X_1,\ldots,X_n,X_1^\ast,\ldots,X_n^\ast\rangle$, there is a constant $C = C(r,s,t_1,\ldots,t_n,f,g)$ such that
\begin{equation} \label{e.multi-cov} \Cov\!\left[\tr(f(B^{1,N}_{r,s}(t_1),\ldots,B^{1,N}_{r,s}(t_n)^\ast)),\tr(g(B^{1,N}_{r,s}(t_1),\ldots,B^{n,N}_{r,s}(t_n)^\ast))\right] \le \frac{C}{N^2}. \end{equation}
\end{theorem}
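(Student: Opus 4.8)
The plan is to set up and analyze a system of PDEs/ODEs for the covariances of trace polynomials, following and extending the heat-kernel/variance technology of \cite[Sections 3 \& 4]{DHK2013}. First I would reduce everything to a single matrix by working on the product group $\GL_N^n$ with the product metric $\langle\cdot,\cdot\rangle^N_{r,s}\oplus\cdots\oplus\langle\cdot,\cdot\rangle^N_{r,s}$, whose Laplacian is $\Delta = \sum_{j=1}^n \Delta_{r,s}^{(j)}$. The independent copies $B^{j,N}_{r,s}(t_j)$ are then the coordinates of a single heat-flow evolution on $\GL_N^n$, and the quantity in \eqref{e.multi-cov} is a covariance under the corresponding heat semigroup $e^{t\Delta/2}$ (run the various factors for their respective times $t_j$). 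The existence of the limit joint distribution follows once we establish that the first moments $\E\tr(P(\mathbf{B},\mathbf{B}^\ast))$ converge for all trace polynomials $P$; this is a standard consequence of Theorem \ref{t.main} together with asymptotic freeness of independent copies, or can be re-derived here by the same ODE machinery.

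The core of the argument is a Duhamel/interpolation identity for the covariance. Writing $Q_t = e^{t\Delta/2}$ for the heat semigroup on (a suitable function space over) $\GL_N^n$, and letting $F = \tr\circ f$, $G = \tr\circ g$ viewed as functions of the $n$ independent matrices, one has
\[
\Cov\big(F(\mathbf{B}), G(\mathbf{B})\big)
= \big(Q_T(FG)\big)(I) - \big(Q_TF\big)(I)\,\big(Q_TG\big)(I)
= \int_0^T \frac{d}{dt}\Big[\big(Q_{T-t}\,(Q_tF\cdot Q_tG)\big)(I)\Big]\,dt,
\]
and the integrand collapses, via $\partial_t Q_t = \tfrac12\Delta Q_t$ and the second-order Leibniz rule $\Delta(\phi\psi) = \phi\,\Delta\psi + \psi\,\Delta\phi + 2\,\Gamma(\phi,\psi)$, to $\int_0^T (Q_{T-t}\,\Gamma(Q_tF, Q_tG))(I)\,dt$, where $\Gamma$ is the carré du champ (the sum of the coordinate-wise gradients paired in the metric). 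The key structural fact — this is exactly the matrix-algebra miracle exploited in \cite{DHK2013} — is that $\Gamma(\tr p, \tr q)$ is again (after expansion using the explicit form of the basis of $\gl_N$ and the scaling $N\Tr$) a $\frac1N$ times a sum of \emph{products} of two trace polynomials, i.e.\ it has the schematic form $\frac1N\sum_\alpha \tr(p_\alpha)\tr(q_\alpha)$. Thus $\Gamma$ carries one explicit factor of $\frac1N$, and applying it to the already-smoothed functions $Q_tF$, $Q_tG$ keeps us inside the algebra generated by traces of words.

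Iterating: $Q_t F$ is not itself a single trace polynomial, but the heat equation $\partial_t (Q_tF) = \tfrac12\Delta(Q_tF)$, together with the fact that $\Delta$ applied to a single trace word produces a bounded-degree sum of single trace words \emph{plus} $\frac1N$ (actually $\frac1{N^2}$) corrections involving products of two traces, shows that $Q_tF$ admits a $\frac1{N^2}$-asymptotic expansion $Q_tF = \phi_0(t) + \frac1{N^2}\phi_1(t) + \cdots$ where each $\phi_k(t)$ lies in a fixed finite-dimensional space of (products of) trace polynomials of degree bounded in terms of $\deg f$, with coefficients solving a closed finite ODE system. Feeding $Q_tF = O(1)$ and $Q_tG = O(1)$ (uniformly in $N$, on the relevant finite time interval) into $\Gamma$ gives $\Gamma(Q_tF,Q_tG) = O(1/N) \cdot (\text{trace polynomial data})$; applying $Q_{T-t}$ preserves this $O(1/N)$ bound (again by the ODE system, using that the homogeneous part is $N$-independent and the inhomogeneity is $O(1/N)$ feeding only $O(1/N)$ forward); and evaluating at $I$ and integrating over $t\in[0,T]$ produces the claimed $O(1/N)$ — not yet $O(1/N^2)$. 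To get the full $\frac1{N^2}$ one exploits a parity/trace-degree argument: the leading $\frac1N$ term in $\Gamma(\phi_0(t),\phi_0(t))$ is a sum $\frac1N\sum \tr(p_\alpha)\tr(q_\alpha)$, and after applying $Q_{T-t}$ and evaluating at $I$, its leading contribution factors as (limit of $\E\tr(\cdots)$)(limit of $\E\tr(\cdots)$); but summed against the Duhamel kernel this leading piece is exactly what reconstructs the \emph{disconnected} part and cancels — more cleanly, one checks that in the matrix model every genuine contribution to a connected correlator of two traces carries $\frac1{N^2}$, so the $\frac1N$ coefficient in the above expansion is forced to vanish identically. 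Concretely I would prove this by tracking the quantity $N^2\Cov$ directly: set $V_N(t) = N^2\big[(Q_t(FG))(I) - (Q_tF)(I)(Q_tG)(I)\big]$ (with the appropriate multi-time bookkeeping), differentiate, and show $V_N$ satisfies a linear ODE with $N$-independent leading coefficients and $O(1)$ forcing, on the finite-dimensional space spanned by $\{$products of $\le 2$ trace words of bounded degree$\}$, whence $V_N(T)$ is bounded uniformly in $N$.

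The main obstacle, as in \cite{DHK2013}, is \textbf{controlling the ODE/PDE system uniformly in $N$}: one must show that (i) $\Delta_{r,s}^N$ applied to any trace word of degree $d$ stays within the finite-dimensional span of trace-word-products of degree $\le d$ with coefficients of the form $a + b/N^2$ for $N$-independent $a,b$ (this is a direct but somewhat involved computation with the inner product \eqref{e.innprodrs}, splitting $\gl_N = \u_N \oplus i\u_N$ and using an orthonormal basis, where the asymmetry between $r$ and $s$ must be handled carefully), and (ii) the resulting linear flow has operator norm bounded on a finite time interval uniformly in $N$, so that the $O(1/N)$ (resp.\ $O(1/N^2)$) estimates propagate through the Duhamel integral without accumulating $N$-dependent constants. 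The multi-time/product-group aspect adds only bookkeeping — since the Laplacians on different factors commute and $\Gamma$ decomposes as a sum over factors — but one must be careful that the ``bounded degree'' spaces are taken large enough to be invariant under all the $\Delta^{(j)}$ simultaneously and under the quadratic operation $\Gamma$. Once the invariant finite-dimensional space and the uniform bound on the flow are in hand, the covariance estimate \eqref{e.multi-cov} and the existence of the limit joint distribution both drop out.
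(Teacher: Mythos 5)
Your proposal is structurally a different route from the paper's.  The paper proves an intertwining formula $\Delta^{j,N}_{r,s}([P]_N) = [(\D^j_{r,s} + \tfrac1{N^2}\L^j_{r,s})P]_N$ on the finite-dimensional trace-polynomial spaces $\PP_d(J)$ (Theorem \ref{t.intertwine1}) and then uses a \emph{static} operator-norm perturbation bound $\|e^{D+\epsilon L}-e^D\|\le C|\epsilon|$ on that finite-dimensional space (Lemma \ref{l.findim}) to compare the genuine heat flow with the $N=\infty$ flow $e^{\D^{\mathbf t}_{r,s}}$; the covariance estimate then falls out because $\D^{\mathbf t}_{r,s}$ is first-order, so $e^{\D^{\mathbf t}_{r,s}}$ is an algebra homomorphism and the limiting covariance vanishes identically.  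Your Duhamel identity $\Cov = \int_0^T \big(Q_{T-t}\,\Gamma(Q_tF,Q_tG)\big)(I)\,dt$ is a legitimate alternative --- a dynamical version of the same comparison --- and would work if carried out correctly.

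However, there is a genuine scaling error that drives the rest of your argument off course.  You assert that $\Gamma(\tr p, \tr q)$ equals $\tfrac1N$ times a sum of \emph{products} of two trace polynomials.  With the paper's normalization $\langle\xi,\eta\rangle_{\u_N} = -N\Tr(\xi\eta)$, the magic formula (\ref{e.ma2}) reads $\sum_{\xi\in\beta_N}\tr(A\xi)\,\xi = -\tfrac1{N^2}A$; since $\del_\xi \tr(p)$ is a sum of terms $\tr(p_\alpha\xi)$, one gets $\sum_\xi (\del_\xi\tr p)(\del_\xi\tr q) = -\tfrac1{N^2}\sum_{\alpha,\beta}\tr(q_\beta\, p_\alpha)$, which carries $\tfrac1{N^2}$ and produces a sum of \emph{single} traces (by concatenation), not products --- this is exactly the content of Theorem \ref{t.intertwine0}(2) and the origin of the operator $\L^j_{r,s}$ with its explicit $\tfrac1{N^2}$.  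With the correct power of $N$, your Duhamel integral gives $O(1/N^2)$ immediately, and the entire ``parity/trace-degree cancellation'' step you invoke (claiming the $\tfrac1N$ coefficient ``is forced to vanish identically'') is not only unnecessary but, as stated, not a proof: it rests on the wrong picture of $\Gamma$ as factoring into a product of traces, and you never verify the asserted cancellation.  What remains once you fix the scaling is precisely your last paragraph: show that the heat flow preserves a fixed finite-dimensional trace-polynomial subspace with operator norm bounded uniformly in $N$ over $[0,T]$, which is what the paper establishes via Corollary \ref{c.intertwine} and Lemma \ref{l.findim}.  Nailing that down closes your argument.
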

\noindent Theorem \ref{t.multi-cov} is proved in Section \ref{section Heat Kernels}.

\section{Background}

In this section, we briefly outline the technology needed to prove the results in this paper: matrix stochastic calculus (particularly for invertible random matrices), the corresponding stochastic calculus in the free probability setting, and the notion of {\em asymptotic freeness} that ties the two together.

\subsection{Stochastic Calculus on $\GL_N$} Let $G$ be a Lie group, with Lie algebra $\mathfrak{g}$.  For $\xi\in\mathfrak{g}$, the associated left-invariant vector field on $G$ is denoted $\del_\xi$:
\begin{equation} \label{e.delxi} \left(\del_\xi f\right)(g)  = \left.\frac{d}{dt}\right|_{t=0} f(g\exp(t\xi)), \qquad f\in C^\infty(G). \end{equation}
Let $\langle\cdot,\cdot\rangle$ be a real inner product on $\mathfrak{g}$, and let $\beta$ be an orthonormal basis for $(\mathfrak{g},\langle\cdot,\cdot\rangle)$.  Then the Laplace-Beltrami operator on $G$ for the Riemannian metric induced by $\langle\cdot,\cdot\rangle$ is
\begin{equation} \label{e.DeltaG} \Delta_G = \sum_{\xi\in\beta} \del_\xi^2, \end{equation}
which does not depend on the particular orthonormal basis used.

If $G\subset\M_N$ is a linear Lie group, then the Brownian motion on $G$ (the diffusion process with generator $\frac12\Delta_G$) may be constructed as the solution to a matrix stochastic differential equation (mSDE).  Fix an orthonormal basis $\beta$ for $\mathfrak{g}$, and let $W(t)$ denote the following Wiener process in $\mathfrak{g}$:
\[ W(t) = \sum_{\xi\in\beta} W_{\xi}(t)\,\xi, \]
where $\{W_\xi\colon \xi\in\beta\}$ are i.i.d.\ standard $\R$-valued Brownian motions.  Then the Brownian motion $B(t)$ is determined by the Stratonovich mSDE
\begin{equation} \label{e.Strat} dB(t) = W(t)\circ dW(t), \qquad W(0) = I_N. \end{equation}
While convenient for proving geometric invariance, the Stratonovich form is less well-adapted to computation.  We can convert (\ref{e.Strat}) to It\^o form.  The result, due to McKean \cite[p.\ 116]{McKean1969} is
\begin{equation} \label{e.B(t)Ito} dB(t) = B(t)\,dW(t) + \frac12B(t)\left(\sum_{\xi\in\beta} \xi^2\right)\,dt, \qquad B(0)=I_N. \end{equation}
See, also, \cite{Gordina2003}.

Let us specialize to the case of interest, with $G=\GL_N$ and $\gl_N$ equipped with an $\mathrm{Ad}_{\U_N}$-invariant inner product $\langle\cdot,\cdot\rangle_{r,s}^N$ of (\ref{e.innprodrs}).  To clarify: let $\langle\cdot,\cdot\rangle_{\u_N}$ denote the following real inner product on $\u_N$:
\begin{equation} \label{e.innproduN} \langle \xi,\eta\rangle_{\u_N} = -N\Tr(\xi\eta). \end{equation}
Then the inner product $\langle\cdot,\cdot\rangle_{r,s}^N$ on $\gl_N=\u_N\oplus i\u_N$ is given by
\begin{equation} \label{e.innprodrs2} \langle \xi_1+i\eta_1,\xi_2+i\eta_2\rangle_{r,s}^N = \frac{1}{r}\langle \xi_1,\xi_2 \rangle_{\u_N} + \frac{1}{s}\langle \eta_1,\eta_2\rangle_{\u_N}. \end{equation}
It is straightforward to check that, if $\beta_N$ is an orthonormal basis for $\u_N$ with respect to $\langle\cdot,\cdot\rangle_{\u_N}$, then
\begin{equation} \label{e.betars} \beta^N_{r,s} = \left\{\sqrt{r}\xi\colon \xi\in \beta_N\right\}\cup\left\{\sqrt{s}i\xi\colon \xi\in\beta_N\right\} \end{equation}
is an orthonormal basis for $\gl_N$ with respect to $\langle\cdot,\cdot\rangle_{r,s}^N$.  Equation (\ref{e.DeltaG}) and a straightforward application of the chain rule in (\ref{e.delxi}) then shows that the Laplace-Beltrami operator is
\begin{equation} \label{e.DeltarsN} \Delta_{r,s}^N = \sum_{\xi\in\beta} (r\del_\xi^2 + s\del_{i\xi}^2). \end{equation}

\begin{remark} In \cite{DHK2013,Kemp2013a}, we used the elliptic operator
\[ A_{s,t}^N = \left(s-\frac{t}{2}\right)\sum_{\xi\in\beta_N} \del_\xi^2 + \frac{t}{2}\sum_{\xi\in\beta_N} \del_{i\xi}^2 = \Delta^N_{s-t/2,t/2}. \]
The linear change of parameters was convenient for our discussion of the two-parameter Segal--Bargmann transform, and so all of the theorems in \cite{Kemp2013a} are stated using this language as well.  \end{remark}

In \cite[Proposition 3.1]{DHK2013}, the following ``magic formula'' was proved.  If $\beta_N$ is an orthonormal basis of $\u_N$, then
\begin{equation} \label{e.magic} \sum_{\xi\in\beta_N} \xi^2 = -I_N. \end{equation}
Combining this with (\ref{e.betars}) gives
\[ \sum_{\xi\in\beta^N_{r,s}} \xi^2 = -(r-s)I_N, \]
and so, by (\ref{e.B(t)Ito}), the $\U_N$-invariant Brownian motion $B^N_{r,s}(t)$ is determined by the mSDE
\begin{equation} \label{e.mSDE-B} dB^N_{r,s}(t) = B^N_{r,s}(t)\,dW^N_{r,s}(t) -\frac12(r-s)B^N_{r,s}(t)\,dt, \end{equation}
where $W^N_{r,s}(t) = \sum_{\xi\in\beta^N_{r,s}} W_\xi(t)\,\xi$.  It will be convenient to express this It\^o process in a slightly different form. Let us choose the following orthonormal basis $\beta_N$ for $\u_N$:
\begin{equation} \label{e.beta_N} \beta_N = \left\{\frac{1}{\sqrt{N}}E_{jj}, \frac{1}{\sqrt{2N}}(E_{jk}-E_{kj}),\frac{1}{\sqrt{2N}}i(E_{jk}+E_{jk})\colon 1\le j<k\le N\right\}, \end{equation}
where $E_{jk}$ is the matrix unit with a $1$ in the $(j,k)$-entry and $0$ elsewhere.  Then it is strightforward to check that
\[ W^N_{r,s}(t) = \sqrt{r}\sum_{\xi\in \beta_N} B_\xi(t)\,\xi + i\sqrt{s} \sum_{\xi\in \beta_N} B_{i\xi}(t)\,\xi = \sqrt{r}\, i X^N(t) + \sqrt{s}\, Y^N(t), \]
where $X^N(t)$ and $Y^N(t)$ are independent {\em $\mathrm{GUE}_N$ Brownian motions}.  That is: all entries of $X^N(t)$ are independent from all entries of $Y^N(t)$; the matrices $X^N(t),Y^N(t)$ are Hermitian; and all entries $[X^N(t)]_{jk}$ and $[Y^N(t)]_{jk}$ with $1\le j\le k\le N$ are i.i.d.\ $\R$-valued Brownian motions of variance $t/N$.  This is a convenient representation, due to the following easily-verified stochastic calculus rules that apply to matrix stochastic integrals with respect to (linear combinations of) $X^N(t)$ and $Y^N(t)$.

\begin{lemma} \label{l.mIto} Let $\Theta(t),\Theta_1(t),\Theta_2(t)$ be $\M_N$-valued stochastic processes that are adapted to the filtration $\mathscr{F}_t$ of $X^N(t)$ and $Y^N(t)$ (in the probability space $(\Omega,\mathscr{F},\mathbb{P})$).  Then the following hold:
\begin{align} \label{e.mIto1} &\E(\Theta_1(t)\,dX^N(t)\,\Theta_2(t)) = \E(\Theta_1(t)\,dY^N(t)\,\Theta_2(t)) = 0 \\
&\label{e.mIto2} dX^N(t)\,\Theta(t)\,dX^N(t) = dY^N(t)\,\Theta(t)\,dY^N(t) = \tr(\Theta(t))I_N\,dt \\
&\label{e.mIto3} dX^N(t)\,\Theta(t)\,dY^N(t) = dY^N(t)\,\Theta(t)\,dX^N(t) = 0 \\
&\label{e.mIto4} \Theta_1(t)\,dX^N(t)\,\Theta_2(t)\,dt = \Theta_1(t)\,dY^N(t)\,\Theta_2(t)\,dt = 0.
\end{align}
Moreover, let $\Theta_1(t)$ and $\Theta_2(t)$ be $\M_N$-valued It\^o processes: solutions to mSDEs of the form
\[ d\Theta(t) = f_1(\Theta(t))\,dX^N(t)\,f_2(\Theta(t)) + g_1(\Theta(t))\,dY^N(t)\,g_2(\Theta(t)) + h(\Theta(t))\,dt, \]
for smooth functions $f_1,f_2,g_1,g_2,h\colon\M_N\to\M_N$.  Then the following It\^o product rule holds:
\begin{equation} \label{e.mItoprod} d(\Theta_1(t)\Theta_2(t)) = d\Theta_1(t)\cdot \Theta_2(t) + \Theta_1(t)\cdot d\Theta_2(t) + d\Theta_1(t)\cdot d\Theta_2(t). \end{equation}
\end{lemma}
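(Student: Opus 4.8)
The plan is to reduce every assertion to one-dimensional It\^o calculus for the scalar entry processes. First I would write $X^N(t) = [X_{jk}(t)]$, $Y^N(t) = [Y_{jk}(t)]$ and record the It\^o multiplication table for the entries. From the description of a $\mathrm{GUE}_N$ Brownian motion (diagonal entries real of variance $t/N$; strictly-upper entries complex with independent real and imaginary parts of variance $t/(2N)$; lower entries the conjugates; the $X$-entries independent of the $Y$-entries), a direct second-moment computation gives
\[ dX_{ik}(t)\,dX_{lj}(t) = dY_{ik}(t)\,dY_{lj}(t) = \tfrac1N\,\delta_{ij}\delta_{kl}\,dt, \quad dX_{ik}(t)\,dY_{lj}(t) = 0, \quad dX_{ik}(t)\,dt = 0. \]
The one place requiring care is the contraction pattern $\delta_{ij}\delta_{kl}$ --- not $\delta_{il}\delta_{kj}$ --- which is exactly the Hermitian symmetry $X_{ki} = \overline{X_{ik}}$ manifesting in the \emph{unconjugated} product $dX_{ik}\cdot dX_{lj}$ (e.g.\ $\E[X_{12}X_{21}] = \E|X_{12}|^2 = t/N$, while $\E[X_{12}X_{12}] = 0$).

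Given this table, (\ref{e.mIto1}) would follow from the fact that a matrix It\^o integral against $X^N$ or $Y^N$ is a local martingale started at $0$: entrywise, $\big(\int_0^t \Theta_1\,dX^N\,\Theta_2\big)_{ij} = \sum_{k,l}\int_0^t (\Theta_1)_{ik}\,(\Theta_2)_{lj}\,dX_{kl}$ is a finite sum of scalar It\^o integrals, hence a local martingale; since in every application here $\Theta_1,\Theta_2$ are polynomials in Brownian motions and thus have moments of all orders, this is a genuine martingale and its expectation vanishes. For (\ref{e.mIto2}) and (\ref{e.mIto3}) I would expand the triple products entrywise, $(dX^N\,\Theta\,dX^N)_{ij} = \sum_{k,l}dX_{ik}\,\Theta_{kl}\,dX_{lj}$, observe that contributions involving $d\Theta$ are of order $dt^{3/2}$ and drop out, and apply the multiplication table to get $\sum_{k,l}\Theta_{kl}\,\tfrac1N\delta_{ij}\delta_{kl}\,dt = \delta_{ij}\,\tr(\Theta)\,dt$, i.e.\ $\tr(\Theta)I_N\,dt$; the mixed products vanish because $dX_{ik}\,dY_{lj} = 0$. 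Equation (\ref{e.mIto4}) is immediate from $dX^N\,dt = 0$.

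Finally, (\ref{e.mItoprod}) is the ordinary multidimensional It\^o product rule read off coordinate by coordinate: writing $(\Theta_1\Theta_2)_{ij} = \sum_k (\Theta_1)_{ik}(\Theta_2)_{kj}$ and applying the scalar product rule to each summand gives $d(\Theta_1\Theta_2)_{ij} = \sum_k\big[ d(\Theta_1)_{ik}\,(\Theta_2)_{kj} + (\Theta_1)_{ik}\,d(\Theta_2)_{kj} + d(\Theta_1)_{ik}\,d(\Theta_2)_{kj}\big]$, and the three sums reassemble into $d\Theta_1\cdot\Theta_2 + \Theta_1\cdot d\Theta_2 + d\Theta_1\cdot d\Theta_2$; the prescribed special form of the $\Theta_i$ enters only to guarantee that the covariation term $d\Theta_1\cdot d\Theta_2$ is well defined, whereupon it is evaluated via (\ref{e.mIto2})--(\ref{e.mIto4}). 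I do not expect any genuine obstacle here: the whole content sits in setting up the entry-level multiplication table with the correct index contractions and then keeping the bookkeeping straight in the triple-product computation.
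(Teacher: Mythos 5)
Your proof is correct and is exactly the entrywise verification the paper declines to write out (the paper only remarks that the lemma ``is straightforward to verify from the standard It\^o calculus for vector-valued processes''). The crux is the contraction rule $dX_{ik}\,dX_{lj}=N^{-1}\delta_{ij}\delta_{kl}\,dt$, and you have correctly identified that the Hermitian symmetry $X_{lj}=\overline{X_{jl}}$ is what forces the pairing $\delta_{ij}\delta_{kl}$ rather than $\delta_{il}\delta_{kj}$, after which (\ref{e.mIto2})--(\ref{e.mIto4}) and the product rule (\ref{e.mItoprod}) are routine index bookkeeping; your remark on local vs.\ genuine martingales for (\ref{e.mIto1}) is a legitimate refinement of the statement as written.
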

\begin{remark} As usual, we abuse notation and write stochastic integral equations in differential form.  For example, the last equality in \ref{e.mIto1} is shorthand for
\[ \E\left(\int_0^t \Theta_1(s)\, dY^N(s)\,\Theta_2(s)\right) = 0, \]
where the matrix stochastic integral is defined exactly as the scalar stochastic integral, using matrix multiplication in the place of scalar multiplication. Lemma \ref{l.mIto} is straightforward to verify from the standard It\^o calculus for vector-valued processes.
\end{remark}

\subsection{Free Stochastic Calculus \label{section free stochastic calculus}}

For an introduction to noncommutative probability theory, and free probability in particular, we refer the reader to \cite{NicaSpeicherBook}.  We assume familiarity with noncommutative probability spaces and $W^\ast$-probability spaces.    The reader is directed to \cite[Sections 1.1--1.3]{Kemp2012a} for a quick introduction to free additive (semicircular) Brownian motion.  Also, we give a brief discussion of free independence at the beginning of Section \ref{section Asymptotic Freeness} below.

Let $(\A,\t)$ be a faithful, tracial $W^\ast$-probability space.  To fix notation, for $a\in\A$ denote its noncommutative distribution as $\p_a$.  I.e.\ letting $\C\langle X,X^\ast\rangle$ denote the noncommutative polynomials in two variables, $\p_a\colon\C\langle X,X^\ast\rangle\to\C$ is the linear functional
\[ \p_a(f) = \t(f(a,a^\ast)), \qquad f\in\C\langle X,X^\ast\rangle. \]
A {\em free semicircular Brownian motion} $x(t)$ is a self-adjoint stochastic process $\left(x(t)\right)_{t\ge 0}$ in $\A$ such that $x(0)=0$, $\Var(x(1)) = 1$, and the additive increments of $x$ are stationary and freely independent: for $0\le t_1<t_2<\infty$,\break $\p_{x(t_2)-x(t_1)} = \p_{x(t_2-t_1)}$, and $x(t_2)-x(t_1)$ is freely independent from the $W^\ast$-subalgebra $\A\supset \A_{t_1} \equiv W^\ast\{x(t)\colon 0\le t\le t_1\}$.  Since $x(t)$ is a bounded self-adjoint operator, its distribution is given by a compactly-supported probability measure on $\R$; the freeness of increments and stationarity then implies that $\p_{x(t_2)-x(t_1)}$ is the {\em semicircle law}: setting $t=t_2-t_1$,
\[ \t[(x(t_2)-x(t_1))^n] = \int_{-2\sqrt{t}}^{2\sqrt{t}} s^n\,\frac{1}{2\pi t}\sqrt{4t-s^2}\,ds, \qquad n\in\N. \]
In \cite{Voiculescu1991}, it was proven that, if $X^N(t)$ is a $\mathrm{GUE}_N$ Brownian motion, then the {\em process} $(X^N(t))_{t\ge 0}$ converges to a free semicircular Brownian motion: for any $n$ and any $t_1,t_2,\ldots,t_n\ge 0$, and any noncommutative polynomial $f\in\C\langle X_1,\ldots,X_n\rangle$,
\[ \lim_{N\to\infty}\E\tr(f(X^N(t_1),\ldots,X^N(t_n))= \t(f(x(t_1),\ldots,x(t_n)). \]
Appealing to Lemma \ref{l.mIto}, this paves the way to {\em free stochastic differential equations}.

Let $x(t)$ and $y(t)$ be two freely independent free semicircular Brownian motions in a $W^\ast$-probability space $(\A,\t)$, and let $\A_t = W^\ast\{x(s),y(s)\colon 0\le s\le t\}$.  Let $\theta(t),\theta_1(t),\theta_2(t)$ be processes that are adapted to the filtration $\A_t$.  The {\em free It\^o integral} 
\[ \int_0^t \theta_1(s)\,dx(s)\,\theta_2(s) \]
is defined in precisely the same manner as It\^o integrals of real-valued processes with respect to real Brownian motion: as $L^2(\A_t,\t)$-limits of sums $\sum_j \theta_1(t_j) (x(t_j)-x(t_{j-1}))\theta_2(t_j)$ over partitions $\{0=t_0 \le \cdots\le t_n=t\}$ as the partition width $\sup_j|t_j-t_{j-1}|$ tends to $0$.  Standard Picard iteration techniques show that, if $f_1,f_2,g_1,g_2,h$ are polynomials then the integral equation
\begin{equation} \label{e.fSIE} b(t) = 1 + \int_0^t f_1(b(s))\,dx(s)\,f_2(b(s)) + \int_0^t g_1(b(s))\,dy(s)\,g_2(b(s)) + \int_0^t h(b(s))\,ds, \\
\end{equation}
has a unique adapted solution $b(t)\in\A_t$ satisfying $b(0)=1$.  As usual, we use differential notation to express (\ref{e.fSIE}) in the form
\begin{equation} \label{e.fSDE} db(t) =  f_1(b(t))\,dx(t)\,f_2(b(t)) + g_1(b(t))\,dy(t)\,g_2(b(t)) + h(b(t))\,dt, \qquad b(0)=1. \end{equation}
We refer to (\ref{e.fSDE}) as a {\em free stochastic differential equation} (fSDE).  Solutions of such equations are called {\em free It\^o processes}.  The matrix stochasic calculus of Lemma \ref{l.mIto} has a precise analogue for free It\^o processes.

\begin{lemma} \label{l.fIto} Let $(\A,\t)$ be a $W^\ast$-probability space containing two freely independent free semicircular Brownian motions $x(t)$ and $y(t)$, adapted to the filtration $\{\A_t\}_{t\ge 0}$.  Let $\theta(t),\theta_1(t),\theta_2(t)$ be processes adapted to $\A_t$.  Then the following hold:
\begin{align} \label{e.fIto1} &\t(\theta_1(t)\,dx(t)\,\theta_2(t)) = \t(\theta_1(t)\,dy(t)\,\theta_2(t))= 0 \\
\label{e.fIto2} &dx(t)\,\theta(t)\,dx(t) = dy(t)\,\theta(t)\,dy(t) = \t(\theta(t))\,dt \\
\label{e.fIto3} &dx(t)\,\theta(t)\,dy(t) = dy(t)\,\theta(t)\,dx(t) = 0 \\
\label{e.fIto4} &\theta_1(t)\,dx(t)\,\theta_2(t)\,dt = \theta_1(t)\,dy(t)\,\theta_2(t)\,dt = 0.
\end{align}
Moreover, if $\theta_1(t)$ and $\theta_2(t)$ are free It\^o processes, then the following It\^o product rule holds:
\begin{equation} \label{e.fItoprod} d(\theta_1(t)\theta_2(t)) = d\theta_1(t)\cdot\theta_2(t) + \theta_1(t)\cdot d\theta_2(t) + d\theta_1(t)\cdot d\theta_2(t). \end{equation}
\end{lemma}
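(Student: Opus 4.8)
The plan is to follow the by-now-standard development of free stochastic calculus in the spirit of \cite{Biane1997c,Kemp2012a}. All of (\ref{e.fIto1})--(\ref{e.fIto4}) and the product rule (\ref{e.fItoprod}) are shorthand for $L^2(\A,\t)$-statements about the free It\^o integrals defined through (\ref{e.fSIE}), and each is verified by writing the relevant integral as an $L^2$-limit of Riemann sums over partitions $0 = t_0 < \cdots < t_n = t$, proving the estimate on each partition, and letting the mesh $\max_j(t_{j+1}-t_j)$ tend to $0$. Since every adapted process occurring below is a free It\^o process, hence norm-bounded on compact time intervals, the uniform bounds needed for these limits are available. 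Throughout I write $\Delta x_j = x(t_{j+1})-x(t_j)$, $\Delta t_j = t_{j+1}-t_j$, and similarly for $y$ and for the $\theta$'s.

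The one algebraic input behind everything is: if $s$ is a semicircular element of variance $\sigma^2$ that is freely independent from a unital subalgebra $\B\subseteq\A$, then the $\B$-valued conditional expectation satisfies $E_\B[s\,a\,s] = \sigma^2\,\t(a)\,1$ for all $a\in\B$ (equivalently $\t(sasb)=\sigma^2\t(a)\t(b)$ for $a,b\in\B$); this is immediate from the definition of freeness after centering $a,b$ and using that $W^\ast(s)$ is free from $\B$. Applying it with $s = \Delta x_j$ --- which, by stationarity and freeness of the increments of $x$, is a variance-$\Delta t_j$ semicircular element free from $\A_{t_j}$ --- gives $E_{\A_{t_j}}[\Delta x_j\,\theta(t_j)\,\Delta x_j] = \Delta t_j\,\t(\theta(t_j))\,1$. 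Writing $\Delta x_j\,\theta(t_j)\,\Delta x_j = \Delta t_j\t(\theta(t_j))1 + M_j$ with $E_{\A_{t_j}}[M_j]=0$ and $\|M_j\|_2 = O(\Delta t_j)$ (using $\|\Delta x_j\|_\infty = 2\sqrt{\Delta t_j}$), and noting $M_j\in\A_{t_k}$ for $j<k$, the cross terms $\t(M_j^\ast M_k) = \t(M_j^\ast E_{\A_{t_k}}[M_k])$ vanish, so $\big\|\sum_j M_j\big\|_2^2 = \sum_j\|M_j\|_2^2 = O(\text{mesh})\to0$; this is (\ref{e.fIto2}) for $x$, and identically for $y$. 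The trace identities (\ref{e.fIto1}) are simpler: by traciality and freeness of the centered increment $\Delta x_j$ from $\A_{t_j}\ni\theta_2(t_j)\theta_1(t_j)$, $\t(\theta_1(t_j)\Delta x_j\theta_2(t_j)) = \t(\Delta x_j)\,\t(\theta_2(t_j)\theta_1(t_j)) = 0$, and the integral is the $L^2$-limit of such sums. For (\ref{e.fIto4}), $\big\|\sum_j\Delta t_j\,\theta_1(t_j)\Delta x_j\theta_2(t_j)\big\|_2 \le C\sum_j\Delta t_j\sqrt{\Delta t_j} = O(\sqrt{\text{mesh}})\to0$.

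The cross-term rule (\ref{e.fIto3}) is the one place the joint structure of $x$ and $y$ enters. By hypothesis $W^\ast(x(s):s\ge0)$ and $W^\ast(y(s):s\ge0)$ are free; since $\Delta x_j$ is free from $W^\ast(x(s):s\le t_j)$ inside the first algebra, associativity of free independence gives that $\Delta x_j$ is free from $W^\ast\big(x(s):s\le t_j;\, y(s):s\le t_{j+1}\big)$, which contains $\A_{t_j}$ and $\Delta y_j$. Hence, for $c_1,c_2\in\A_{t_j}$, $\t(c_1\Delta x_j\,\theta(t_j)\,\Delta y_j\,c_2) = \t(\Delta x_j)\,\t(\theta(t_j)\Delta y_j c_2 c_1) = 0$ by traciality and centeredness, and the $L^2$-norm of the cross-variation sum is controlled identically. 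Finally, the It\^o product rule (\ref{e.fItoprod}) follows from summing the telescoping identity $\theta_1(t_{j+1})\theta_2(t_{j+1})-\theta_1(t_j)\theta_2(t_j) = \Delta\theta_1(t_j)\,\theta_2(t_j) + \theta_1(t_j)\,\Delta\theta_2(t_j) + \Delta\theta_1(t_j)\Delta\theta_2(t_j)$ over the partition: the first two sums converge to the first two integrals on the right of (\ref{e.fItoprod}) by definition of the free It\^o integral, and the third converges to $\int_0^t d\theta_1\cdot d\theta_2$, whose surviving contributions are exactly the $dx\,dx$, $dy\,dy$ and $dx\,dy$ pairings evaluated via (\ref{e.fIto2})--(\ref{e.fIto3}), the $dx\,dt$ and $dt\,dt$ pieces being killed by (\ref{e.fIto4}). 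The main obstacle I anticipate is the $L^2$-error estimate in (\ref{e.fIto2}): one must show the quadratic-variation Riemann sums converge in $L^2(\A,\t)$, not merely in trace, and this martingale-orthogonality computation --- together with the analogous bounds justifying passage to the limit in every sum above --- is the technical core of the argument.
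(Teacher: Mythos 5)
The paper does not actually give a proof of Lemma~\ref{l.fIto}; it simply cites \cite{Biane1998a}. Your argument is a correct, self-contained proof along the standard lines of free stochastic calculus, and it contains the essential ideas that Biane's development uses, so in substance it reproduces the cited proof rather than replacing it.

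A few points to note. Your central computation $E_{\A_{t_j}}[\Delta x_j\,a\,\Delta x_j]=\Delta t_j\,\t(a)\,1$ for $a\in\A_{t_j}$ is exactly right, and the martingale-orthogonality estimate $\bigl\|\sum_j M_j\bigr\|_2^2=\sum_j\|M_j\|_2^2=O(\mathrm{mesh})$ is the correct technical core; your use of $\|\Delta x_j\|_\infty=2\sqrt{\Delta t_j}$ to get $\|M_j\|_2=O(\Delta t_j)$ is what makes this work. For (\ref{e.fIto3}) your invocation of associativity of freeness is correct, though it deserves one more sentence: from ``$W^\ast(\Delta x_j)$ free from $W^\ast(x(s):s\le t_j)$'' and ``$W^\ast(x)$ free from $W^\ast(y)$'' one first upgrades to the triple $\{W^\ast(\Delta x_j),\,W^\ast(x(s):s\le t_j),\,W^\ast(y)\}$ being free, and only then concludes $W^\ast(\Delta x_j)$ is free from the join of the other two; this join contains both $\A_{t_j}$ and $\Delta y_j$, which is what you need. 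You also need (and implicitly use) that $E_{\A_{t_j}}[N_j]=0$ where $N_j=\Delta x_j\theta(t_j)\Delta y_j$, which follows by testing against $\A_{t_j}$ exactly as in your trace computation. Finally, you prove (\ref{e.fIto2}) in the ``bare'' form $\sum_j\Delta x_j\theta(t_j)\Delta x_j\to\int\t(\theta)\,dt$; when this is invoked inside the It\^o product rule one has adapted coefficients $\psi_1,\psi_2$ on either side, but the same decomposition works since $E_{\A_{t_j}}[\psi_1(t_j)M_j\psi_2(t_j)]=\psi_1(t_j)E_{\A_{t_j}}[M_j]\psi_2(t_j)=0$ by the bimodule property of the conditional expectation, so this is a cosmetic rather than substantive omission.
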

\noindent For a proof of Lemma \ref{l.fIto}, see \cite{Biane1998a}.

\subsection{Asymptotic Freeness \label{section Asymptotic Freeness}}


\begin{definition} \label{d.free} Let $(\A,\t)$ be a noncommutative probability space.  Unital $\ast$-subalgebras $\A_1,\ldots,\A_m\subset\A$ are called {\bf free} with respect to $\t$ if, given any $n\in\N$ and $k_1,\ldots,k_n\in\{1,\ldots,m\}$ such that $k_{j-1}\ne k_j$ for $1<j\le n$,  and any elements $a_j\in \A_{k_j}$ with $\t(a_j)=0$ for $1\le j\le n$, it follows that $\t(a_1\cdots a_n)=0$.  Random variables $a_1,\ldots,a_m$ are said to be {\bf freely independent} of the unital $\ast$-algebras $\A_j = \langle a_j,a_j^\ast\rangle\subset \A$ they generate are free.\end{definition}
Free independence is a $\ast$-moment factorization property.  By centering $a_i-\t(a_i)1_\A\in\A_i$, the freeness rule allows (inductively) any moment $\t(a_{k_1}^{\ex_1} \cdots a_{k_n}^{\ex_n})$ to be decomposed as a polynomial in moments $\t(a_i^\ex)$ in the variables separately.  For example, if $a,b$ are freely independent then $\t(a^{\ex} b^{\d})= \t(a^{\ex})\t(b^{\d})$, while
\[ \t(a^{\ex_1}b^{\d_1}a^{\ex_2} b^{\d_2}) = \t(a^{\ex_1})\t(a^{\ex_2})\t(b^{\d_1}b^{\d_2})+ \t(a^{\ex_1}a^{\ex_2})\t(b^{\d_1})\t(b^{\d_2}) - \t(a^{\ex_1})\t(a^{\ex_2})\t(b^{\d_1})\t(b^{\d_2}), \]
for any $\ex,\ex_1,\ex_2,\d,\d_1,\d_2\in\{1,\ast\}$.  In general, if $a_1,\ldots,a_n$ are freely independent, then their noncommutative joint distribution $\p_{a_1,\ldots,a_n}$ (a linear functional on $\C\langle X_1,\ldots,X_n,X_1^\ast,\ldots,X_n^\ast\rangle$) is determined by the individual distributions $\p_{a_1},\ldots,\p_{a_n}$ (linear functionals on $\C\langle X,X^\ast\rangle$).  

Let $L^{\infty-}(\Omega,\mathscr{F},\P) = \bigcap_{p>1} L^p(\Omega,\mathscr{F},\P)$, and let $\M_N\tensor L^{\infty-}$ denote the algebra of $N\times N$ matrices with entries in $L^{\infty-}(\Omega,\mathscr{F},\P)$.  There are no non-trivial instances of free independence in the noncommutative probability space $(\M_N\tensor L^{\infty-},\E\tr)$; i.e.\ if $A,B\in\M_N\tensor L^{\infty-}$ are freely independent, then at least one of them is a.s.\ a constant multiple of the identity matrix $I_N$.  However, {\em asymptotic freeness} abounds.

\begin{definition} \label{d.asympfree} let $n\in\N$.  For each $N\in\N$, let $A_1^N,\ldots,A^N_n$ be random matrices in $\M_N\tensor L^{\infty-}$.  Say that $(A^N_1,\ldots,A^N_n)$ are {\bf asymptotically free} if there is a noncommutative probability space $(\A,\t)$ containing freely independent random variables $a_1,\ldots,a_n$ such that $(A^N_1,\ldots,A^N_n)$ converges in noncommutative distribution to $(a_1,\ldots,a_n)$.
\end{definition}
The general mantra for producing asymptotically free random matrices is as follows.
\begin{quote} If $A_1^N,\ldots,A_n^N$ are random matrices whose distribution is invariant under unitary conjugation, and possess a joint limit distribution, then they are asymptotically free.
\end{quote}
The first result in this direction was proved in \cite{Voiculescu1991}, where the matrices $A_j^N$ were taken to have the form $A_j^N = U^N_j D^N_j (U^N_j)^{-1}$ where $U_1^N,\ldots,U_n^N$ are independent Haar-distributed unitaries, and $D^N_j$ are deterministic diagonal matrices with uniform bounds on their trace moments.  This was later improved to include all deterministic matrices (with uniform bounds on their operator norms) in \cite{Voiculescu1998}; see, also, \cite{Collins2003,Xu1997} for related results.  We will use the following form of the mantra, which is a weak form of \cite[Theorem 1]{Mingo2007}.

\begin{theorem} \label{t.asympfree} Let $A^N_1,\ldots,A^N_n$ be independent random matrices in $\M_N\tensor L^{\infty-}$, with the following properties.
\begin{itemize}
\item[(1)] The joint law of $A^N_1,\ldots,A^N_n$ is invariant under conjugation by unitary matrices in $\U_N$.
\item[(2)] There is a joint limit distribution: for any noncommutative polynomial $f\in\C\langle X_1,\ldots,X_n,X_1^\ast,\ldots,X_n^\ast\rangle$, $\lim_{N\to\infty}\E\tr(f(A^N_1,\ldots,A^N_n,(A^N_1)^\ast,\ldots,(A^N_n)^\ast))$ exists.
\item[(3)]  The fluctuations are $O(1/N^2)$: for any noncommutative polynomials $f,g$ as in (2), there is a constant $C=C(f,g)$ so that
\[ \Cov\!\left[\tr\!\left(f(A^N_1,\ldots,A^N_n,(A^N_1)^\ast,\ldots,(A^N_n)^\ast)\right),\tr\!\left(g(A^N_1,\ldots,A^N_n,(A^N_1)^\ast,\ldots,(A^N_n)^\ast)\right)\right] \le \frac{C}{N^2}. \]
\end{itemize}
Then $A^N_1,\ldots,A^N_n$ are asymptotically free.
\end{theorem}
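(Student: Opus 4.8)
I would derive the theorem from Voiculescu's classical asymptotic freeness theorem for independent Haar unitaries conjugating deterministic matrices, using hypotheses (2) and (3) to pass between the deterministic and random settings.  Assume, as holds throughout this paper, that all the $A^N_j$ live on one probability space $(\Omega,\mathscr{F},\P)$; write $\E=\int_\Omega\cdot\,d\P$, $\mu_j$ for the limiting $\ast$-distribution of $A^N_j$ (which exists by (2)), and $\mu$ for the free product of $\mu_1,\ldots,\mu_n$.  The first step is to \emph{Haar-randomize}: introduce i.i.d.\ Haar unitaries $V^N_1,\ldots,V^N_n$ on $\U_N$, independent of $A^N_1,\ldots,A^N_n$.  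Since the $A^N_j$ are mutually independent and, by (1), each has unitarily invariant law, conditioning on $(V^N_1,\ldots,V^N_n)$ shows $(V^N_jA^N_j(V^N_j)^\ast)_{j}\stackrel{d}{=}(A^N_j)_{j}$; hence the two tuples have identical expected $\ast$-moments for every word, so it suffices to identify their common joint limit $\ast$-distribution (which exists by (2)) as $\mu$, and the marginal limit of $V^N_jA^N_j(V^N_j)^\ast$ remains $\mu_j$.

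Next I would use (3) to upgrade convergence of expected moments to \emph{almost sure} convergence of marginals.  For a $\ast$-monomial $w$ in one variable, (3) applied to the self-adjoint polynomials $\tfrac12(w+w^\ast)$ and $\tfrac1{2i}(w-w^\ast)$ bounds both $\Var(\Re\tr(w(A^N_j)))$ and $\Var(\Im\tr(w(A^N_j)))$ by $O(N^{-2})$, so Chebyshev gives $\P(|\tr(w(A^N_j))-\E\tr(w(A^N_j))|>\d)=O(N^{-2})$ and Borel--Cantelli gives $\tr(w(A^N_j))\to\mu_j(w)$ a.s.  Intersecting these full-measure events over the countably many $w$ and all $j$ yields $\Omega_0$ with $\P(\Omega_0)=1$ on which every marginal empirical $\ast$-distribution converges to $\mu_j$.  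For $\omega\in\Omega_0$ the matrices $A^N_1(\omega),\ldots,A^N_n(\omega)$ are deterministic with convergent $\ast$-distributions, so Voiculescu's theorem --- in the form requiring only convergence of the $\ast$-moments of the conjugated matrices rather than operator-norm bounds; see \cite{Voiculescu1991,Voiculescu1998,Collins2003,NicaSpeicherBook} --- applies to the conditioned ensemble and gives, for every word $W$ in $n$ variables,
\[ \lim_{N\to\infty}\E_V\tr\!\left(W(V^N_1A^N_1(\omega)(V^N_1)^\ast,\ldots,V^N_nA^N_n(\omega)(V^N_n)^\ast)\right)=\mu(W),\qquad\omega\in\Omega_0, \]
with $\E_V$ denoting expectation over the Haar unitaries only; crucially, this limit does not depend on $\omega\in\Omega_0$.

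It then remains to \emph{un-condition}, integrating over $\omega$ and exchanging the limit with $\E_\omega$.  Writing $g_N(\omega)$ for the integrand on the left above, Jensen's inequality followed by the distributional identity of the first step gives $\E_\omega|g_N(\omega)|^2\le\E|\tr(W(A^N_1,\ldots,A^N_n))|^2$, which equals $|\E\tr(W(A^N_1,\ldots,A^N_n))|^2$ plus a complex variance that is $O(N^{-2})$ by (3); since the first term converges by (2), the family $\{g_N\}$ is bounded in $L^2(\Omega)$, hence uniformly integrable.  As $g_N\to\mu(W)$ a.e.\ by the previous paragraph, Vitali's convergence theorem gives $\E\tr(W(A^N_1,\ldots,A^N_n))=\E_\omega g_N(\omega)\to\mu(W)$.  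Since $W$ is arbitrary, the joint limit $\ast$-distribution of $(A^N_1,\ldots,A^N_n)$ is $\mu$, the free product of its marginals; realizing $\mu$ as the joint distribution of freely independent $a_1,\ldots,a_n$ with $a_j\sim\mu_j$ in a free product of noncommutative probability spaces gives precisely the asymptotic freeness of $A^N_1,\ldots,A^N_n$.

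The step I expect to be the main obstacle is the lack of uniform operator-norm control on the $A^N_j$ (in our application the limit $b_{r,s}(t)$ is bounded, but the matrices $B^N_{r,s}(t)$ need not be uniformly so), which is exactly what forces hypothesis (3): it is used once via Borel--Cantelli to make the $\omega$-by-$\omega$ appeal to Voiculescu's theorem legitimate, and once to obtain the uniform integrability that removes the conditioning.  A fully self-contained alternative avoiding the black box would replace the middle steps by a direct genus-expansion / Weingarten computation of $\E\tr$ of a word in the $V^N_jA^N_j(V^N_j)^\ast$: integrating out the Haar unitaries first, the leading contribution factorizes over non-crossing partitions while (3) controls the remainder --- this is essentially the argument behind \cite{Mingo2007} --- but the conditioning argument above is shorter to present.
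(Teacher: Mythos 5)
Your argument takes a genuinely different route from the paper. The paper treats this theorem as a known result: it states that it is ``a weak form of [Theorem~1]\cite{Mingo2007}'' and, in the remark that follows, observes that Mingo--Speicher's stronger hypotheses (cumulants $o(1/N^r)$ for $r>2$, producing second-order freeness) are not needed to extract the first-order conclusion, at least for $n=2$, with general $n$ following by induction and associativity of freeness. In other words the paper's ``proof'' is a citation plus an inspection of the Mingo--Speicher genus-expansion argument. Your proposal instead builds the result from the Voiculescu unitary-conjugation theorem via Haar randomization, Borel--Cantelli, $\omega$-by-$\omega$ application of Voiculescu, and a Vitali un-conditioning step. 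This is a clean and more self-contained decomposition: hypothesis (1) is consumed exactly by the Haar randomization, and hypothesis (3) is consumed twice, once for almost-sure convergence of marginals and once for the uniform integrability needed to pass the limit through $\E_\omega$. The structure of the argument is sound, including the Jensen step $\E_\omega|g_N|^2 \le \E|\tr W|^2 = |\E\tr W|^2 + \Var(\tr W)$, which combined with (2) and (3) gives an $L^2$ bound on $\{g_N\}$.

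The one place you should tighten is the appeal to ``Voiculescu's theorem in the form requiring only convergence of the $\ast$-moments \ldots rather than operator-norm bounds.'' Both \cite{Voiculescu1991} (diagonal, uniformly bounded) and \cite{Voiculescu1998} (deterministic, uniform operator-norm bound) are stated with norm control, so they do not literally apply $\omega$-by-$\omega$ to matrices whose norms may diverge with $N$; and this is not a technicality here, since $B^N_{r,s}(t)$ is indeed not uniformly bounded. What does apply is the Weingarten/genus-expansion version of the statement (essentially \cite{Collins2003}; see also the treatment in \cite{NicaSpeicherBook}), where, for a fixed word $W$, the integral $\E_V\tr(W(V_1D_1V_1^\ast,\ldots))$ is a \emph{finite} sum of Weingarten coefficients times products of normalized traces of words in the $D_j$, so that convergence of those traces alone suffices for the limit, and the Weingarten asymptotics deliver the free-product structure. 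You acknowledge this at the end, but it deserves to be the version you invoke in the body of the argument, since it is the only one that makes the ``deterministic $\omega$-by-$\omega$'' step legitimate without norm hypotheses. With that substitution, your proof is a correct and instructive alternative to the paper's citation.
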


\begin{remark} \cite[Theorem 1]{Mingo2007} has a much stronger assumption than (3): it also assumes that the classical cumulants $k_r$ in normalized traces of noncommutative polynomials are $o(1/N^r)$ for all $r>2$, thus producing a so-called {\em second-order limit distribution}.  However, this stronger assumption is used only to produce a stronger conclusion: that the matrices are asymptotically free {\em of second-order}.  Following the proof, it is relatively easy to see that Theorem \ref{t.asympfree} is proved along the way, at least in the case $n=2$.  To go from $2$ to general finite $n$ can be achieved by induction together with the associativity of freeness; cf.\ \cite[Proposition 2.5.5(iii)]{VDNBook}.  See, also, \cite{Mingo2013} where this is proved more explicitly in the harder case of real random matrices (where $\U_N$-invariance is replaced with $\mathbb{O}_N$-invariance).
\end{remark}

\section{Heat Kernels on $\GL_N^n$ \label{section Heat Kernels}}

Here we generalize the technology we developed in \cite[Sections 3.4 \& 4.1]{DHK2013} to independent products of heat kernel measures on $\GL_N$.

\subsection{Laplacians on $\GL_N^n$}

Let $n,N\in\N$.  Then $\GL_N^n = \GL_N\times\cdots\times\GL_N$ is a Lie group of real dimension $2nN^2$.  Its Lie algebra is $\gl_N^n = \gl_N\oplus\cdots\oplus\gl_N$.  For $\xi\in\gl_N$, and $1\le j\le n$, let $\xi_j$ denote the vector $(0,\ldots,0,\xi,0,\ldots,0)\in \gl_N^n$ (with $\xi$ in the $j$th component).  The Lie product on $\gl_N^n$ is then determined by $[\xi_j,\eta_k]=\delta_{jk}(\xi_j\eta_k-\eta_j\xi_k)$ for $1\le j,k\le n$.  In particular, if $j\ne k$ and $\xi,\eta\in\gl_N$, then the left-invariant derivations $\del_{\xi_j}$ and $\del_{\eta_k}$ on $C^\infty(\GL_N^n)$ commute.  To be clear, note that, for $f\in C^\infty(\GL_N^n)$,
\begin{equation} \label{e.delxij} (\del_{\xi_j}f)(A_1,\ldots,A_n) = \left.\frac{d}{dt}\right|_{t=0} f(A_1,\ldots,A_{j-1},A_je^{t\xi},A_{j+1},\ldots,A_n). \end{equation}

Let $\beta^N_{r,s}$ denote an orthonormal basis for $\gl_N$ (with respect to $\langle\cdot,\cdot\rangle^N_{r,s}$, as in (\ref{e.betars})).  For $1\le j\le n$, define
\begin{equation} \label{e.Deltaj} \Delta_{r,s}^{j,N} = \sum_{\xi\in\beta^N_{r,s}} \del_{\xi_j}^2. \end{equation}
Note that $\Delta^{j,N}_{r,s}$ and $\Delta^{k,N}_{r,s}$ commute for all $j,k$.  Now, fix $t_1,\ldots,t_n>0$.  Then the operator
\[ t_1\Delta^{1,N}_{r,s} + \cdots + t_n\Delta^{n,N}_{r,s} \]
is elliptic, and essentially self-adjoint on $C^\infty_c(\GL_N^n)$.  We may therefore use the spectral theorem to define the bounded operator
\[ e^{\frac12(t_1\Delta^{1,N}_{r,s} + \cdots + t_n\Delta^{n,N}_{r,s})} = e^{\frac{1}{2}t_1\Delta^{1,N}_{r,s}}\cdots e^{\frac{1}{2}t_n\Delta^{n,N}_{r,s}}. \]
Define the {\bf heat kernel measure} $\mu^{n,N}_{r,s;t_1,\ldots,t_n}$ on $\GL_N^n$ by
\begin{equation} \label{e.hkn} \int_{\GL_N^n} f\,d\mu^{n,N}_{r,s;t_1,\ldots,t_n} = \left(e^{\frac12(t_1\Delta^{1,N}_{r,s} + \cdots + t_n\Delta^{n,N}_{r,s})}f\right)(I_N^n), \qquad f\in C_c(\GL_N^n),
\end{equation}
where $I_N^n = (I_N,\ldots,I_N)\in\GL_N^n$.  In particular, let $K_1,\ldots,K_n\subset\GL_N$ be compact sets; by approximating $\1_{K_1\times\cdots\times K_n}$ with a continuous function, we see that
\[ \mu^{n,N}_{r,s;t_1,\ldots,t_n}(K_1\times\cdots\times K_n) = \left(e^{\frac{1}{2}t_1\Delta^N_{r,s}}\1_{K_1}\right)(I_N) \cdots  \left(e^{\frac{1}{2}t_n\Delta^N_{r,s}}\1_{K_n}\right)(I_N) = \mu^{1,N}_{r,s;t_1}(K_1)\cdots \mu^{1,N}_{r,s;t_n}(K_n). \]
Since $\mu_{r,s;t}^{1,N}$ is the heat kernel measure on $\GL_N$ corresponding to $\Delta^N_{r,s}$, it is the distribution of the Brownian motion $B_{r,s}^N(t)$, and so we have shown the following.

\begin{lemma} \label{l.prodmu} Let $(B^{1,N}_{r,s}(t))_{t\ge 0},\ldots,(B^{n,N}_{r,s}(t))_{t\ge 0}$ be $n$ independent $(r,s)$-Brownian motions on $\GL_N$.  Then the joint law of the random vector $(B^{1,N}_{r,s}(t_1),\ldots,B^{n,N}_{r,s}(t_n))$ is $\mu^{n,N}_{r,s;t_1,\ldots,t_n}$. \end{lemma}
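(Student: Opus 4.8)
The plan is to assemble the facts just established and finish with a standard measure-uniqueness argument. Both the joint law of $(B^{1,N}_{r,s}(t_1),\ldots,B^{n,N}_{r,s}(t_n))$ and the measure $\mu^{n,N}_{r,s;t_1,\ldots,t_n}$ are Borel probability measures on $\GL_N^n$, and the product sets $K_1\times\cdots\times K_n$ with $K_1,\ldots,K_n\subset\GL_N$ compact form a $\pi$-system that generates the Borel $\sigma$-algebra of $\GL_N^n$. So, by the $\pi$--$\lambda$ theorem, it suffices to show that the two measures agree on all such boxes.

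First I would use the hypothesis that the $n$ processes are independent, together with the fact that $B^{j,N}_{r,s}(t_j)$ has law $\mu^{1,N}_{r,s;t_j}$ — the heat kernel measure on $\GL_N$ associated to $\Delta^N_{r,s}$, which by Definition \ref{d.BM0} is precisely the time-$t_j$ distribution of the Brownian motion $B^N_{r,s}(\cdot)$. This yields, for any box, that the joint law assigns it the value $\prod_{j=1}^{n}\mu^{1,N}_{r,s;t_j}(K_j)$. Then I would invoke the computation displayed immediately before the lemma statement, which shows that $\mu^{n,N}_{r,s;t_1,\ldots,t_n}(K_1\times\cdots\times K_n)=\prod_{j=1}^{n}\mu^{1,N}_{r,s;t_j}(K_j)$ as well. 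Hence the two measures agree on every box, and therefore coincide.

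The steps inside that preceding computation that I would make explicit are: (i) the factorization $e^{\frac12\sum_j t_j\Delta^{j,N}_{r,s}}=\prod_j e^{\frac12 t_j\Delta^{j,N}_{r,s}}$, which rests on the $\Delta^{j,N}_{r,s}$ acting in disjoint blocks of coordinates and hence commuting, so that their heat semigroups commute and compose; (ii) the fact that the $j$th semigroup acts only in the $j$th factor, so that $\bigl(\prod_j e^{\frac12 t_j\Delta^{j,N}_{r,s}}\bigr)(\1_{K_1}\otimes\cdots\otimes\1_{K_n})(I_N^n)=\prod_j\bigl(e^{\frac12 t_j\Delta^N_{r,s}}\1_{K_j}\bigr)(I_N)$; and (iii) that applying the bounded, spectrally-defined heat semigroups to the indicator functions $\1_{K_j}$ is legitimate, by approximating $\1_{K_j}$ from above and below by functions in $C_c(\GL_N)$ and using that $e^{\frac12 t\Delta^N_{r,s}}$ is positivity-preserving.

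There is no real obstacle here: the substance of the lemma was carried out in the paragraph preceding its statement, and what remains is the routine point that agreement on a generating $\pi$-system of compact boxes forces equality of probability measures, together with the equally routine approximation used to evaluate the heat semigroup on indicators. The closest thing to a subtlety is knowing that $e^{\frac12\sum_j t_j\Delta^{j,N}_{r,s}}$ is well defined in the first place — but this was already secured above, via the ellipticity and essential self-adjointness of $\sum_j t_j\Delta^{j,N}_{r,s}$ on $C_c^\infty(\GL_N^n)$.
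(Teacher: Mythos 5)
Correct, and this is essentially the paper's own argument: the paper's proof is the paragraph immediately preceding the lemma, which carries out the same factorization of $\mu^{n,N}_{r,s;t_1,\ldots,t_n}$ on compact boxes and identifies the factors as the laws of the individual Brownian motions. You have simply spelled out the measure-uniqueness step (agreement on a generating $\pi$-system of compact boxes, via the $\pi$--$\lambda$ theorem) and the approximation of indicators by $C_c$ functions, both of which the paper leaves implicit.
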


\subsection{Multivariate Trace Polynomials \label{section Trace Polynomials}}

Let $J$ be an index set (for our purposes in this section, we will usually take $J=\{1,\ldots,n\}$ for some $n\in\N$).  Let $\EX_J$ denote the set of all nonempty words in $J\times\{1,\ast\}$, $\EX_J= \bigcup_{n\in\N} (J\times\{1,\ast\})^n$.  Let $\mx{v}_J = \{v_\ex\colon \ex\in\EX_J\}$ be commuting variables, and let
\[ \PP(J) = \C[\mx{v}_J] \]
be the algebra of (commutative) polynomials in the variables $\mx{v}_J$.  That is: as a $\C$-vector space, $\PP(J)$ has as its standard basis $1$ together with the monomials
\begin{equation} \label{e.monomial} v_{\ex^{(1)}}\cdots v_{\ex^{(k)}}, \quad k\in\N, \quad \ex^{(1)},\ldots,\ex^{(k)}\in \EX_J, \end{equation}
and the (commutative) product on $\PP(J)$ is the standard polynomial product.

We may identify monomials in $\C\langle X_j,X_j^\ast\colon j\in J\rangle$ with the variables $v_\ex$, via
\[ \Upsilon(X_{j_1}^{\ex_1}\cdots X_{j_k}^{\ex_k}) = v_{\left((j_1,\ex_1),\ldots,(j_k,\ex_k)\right)}.\]
Extending linearly, $\Upsilon\colon \C\langle X_j,X_j^\ast\colon j\in J\rangle\hookrightarrow \PP(J)$ is a linear inclusion, identifying $\C\langle X_j,X_j^\ast\colon j\in J\rangle$ with the {\em linear} polynomials in $\PP(J)$.  The algebra $\PP(J)$ is the ``universal enveloping algebra'' of $\C\langle X_j,X_j^\ast\colon j\in J\rangle$, in the following sense: any linear functional $\p$ on $\C\langle X_j,X_j^\ast\colon j\in J\rangle$ extends (via $\Upsilon$) uniquely to an algebra homomorphism $\widetilde{\p}\colon \P(J)\to\C$.  Conversely, any algebra homomorphism $\PP(J)\to\C$ is determined by its restriction to $\Upsilon(\C\langle X_j,X_j^\ast\colon j\in J\rangle)$, which intertwines a unique linear functional on $\C\langle X_j,X_j^\ast\colon j\in J\rangle$.  Hence, the noncommutative distribution $\p_{\{a_j\colon j\in J\}}$ of $J$ random variables can be equivalently represented as an algebra homomorphism $\PP(J)\to\C$.

\begin{definition} \label{d.trdeg} For a monomial (\ref{e.monomial}), the {\bf trace degree} is defined to be
\[ \deg(v_{\ex^{(1)}}\cdots v_{\ex^{(k)}}) = |\ex^{(1)}|+\cdots+|\ex^{(k)}|, \]
where $|\ex|=n$ if $\ex\in (J\times\{1,\ast\})^n$.  More generally, if $P\in\PP(J)$, then $\deg(P)$ is the maximal trace degree of the monomial terms in $P$.  Define $\deg(0)=0$.  Note that $\deg(PQ) = \deg(P)+\deg(Q)$, and $\deg(P+Q) \le  \max\{\deg(P),\deg(Q)\}$ for $P,Q\in\PP(J)$.  For $d\in\N$, denote by $\PP_d(J)$ the subspace
\[ \PP_d(J) = \{P\in\PP(J)\colon \deg(P)\le d\}. \]
Note that $\PP_d(J)$ is finite dimensional (if $J$ is finite), and $\PP(J) = \bigcup_{d\ge 1}\PP_d(J)$.
\end{definition}

We now introduce a kind of functional calculus for $\PP(J)$.

\begin{definition} \label{d.fncalc} Let $(\A,\t)$ be a noncommutative probability space.  Let $J$ be an index set, and let $\{a_j\colon j\in J\}$ be specified elements in $\A$.  For $n\in\N$, and $(J\times\{1,\ast\})^n\ni\ex=\left((j_1,\ex_1),\ldots,(j_n,\ex_n)\right)$, define
\[ a^\ex \equiv a_{j_1}^{\ex_1}\cdots a_{j_n}^{\ex_n}. \]
We define for each $P\in\PP(J)$ a complex number $P_\t(a_j\colon j\in J)$ as follows: for $\ex\in\EX_J$, $[v_\ex]_\t(a_j\colon j\in J) = \t(a^\ex)$; and, in general, the map $P\mapsto P_\t(a_j\colon j\in J)$ is an algebra homomorphism from $\PP(J)$ to $\C$.
\end{definition}
\noindent In other words: $P_\t$ is the unique algebra homomorphism extending (via $\Upsilon$) the linear functional $\p_{\{a_j\colon j\in J\}}$ on $\C\langle X_j,X_j^\ast\colon j\in J\rangle$ (i.e.\ the noncommutative distribution of $\{a_j\colon j\in J\}$).

\begin{example} \label{ex.1} Let $J=\{1,2\}$, and consider $\PP(J)\ni P = v_{(1,\ast),(2,1),(1,1)}-2v_{(2,1)}^2$, which has trace degree $3$; then
\[ P_\t(a_1,a_2) = \t(a_1^\ast a_2 a_1)-2\left(\t(a_2)\right)^2. \]
\end{example}
\noindent We generally refer to the functions $\{P_\t\colon P\in\PP(J)\}$ as (multivariate) {\bf trace polynomials}.

\begin{notation} For $N\in\N$, in the noncommutative probability space $(\M_N,\tr)$, we denote the evaluation map $P\mapsto P_{\tr}$ of Definition \ref{d.fncalc} as $P\mapsto P_N$.  Thus, if $A_1,\ldots,A_n\in\M_N\tensor L^{\infty-}$, and $P$ is as in Example \ref{ex.1}, then $P_N(A_1,\ldots,A_n) = \tr(A_1^\ast A_2A_1) -2\left(\tr(A_2)\right)^2$, which is a random variable, to be clear. \end{notation}

\subsection{Intertwining Formula} 

The following ``magic formulas'' appeared as \cite[Proposition 1]{DHK2013}; note that (\ref{e.magic}) is a special case of (\ref{e.ma1}).

\begin{proposition} \label{p.magic} Let $\beta_N$ be an orthonormal basis for $\u_N$ with respect to the inner product (\ref{e.innproduN}).  Then for any $A\in\M_N$
\begin{align} \label{e.ma1} &\sum_{\xi\in\beta_N} \xi A\xi = -\tr(A)I_N, \\ \label{e.ma2}
&\sum_{\xi\in\beta_N} \tr(A\xi)\xi = -\frac{1}{N^2}A. \end{align}
\end{proposition}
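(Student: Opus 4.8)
The plan is to establish the second identity (\ref{e.ma2}) first by a direct basis-expansion argument, and then deduce the first identity (\ref{e.ma1}) from it by a short computation in matrix entries. Write $\beta_N=\{\xi_1,\ldots,\xi_{N^2}\}$. The key observation is that, although $\beta_N$ is a basis of the \emph{real} vector space $\u_N$, it is also a basis of the \emph{complex} vector space $\M_N=\gl_N$, because $\gl_N=\u_N\oplus i\u_N$ and $\dim_{\R}\u_N=N^2=\dim_{\C}\M_N$. Hence every $A\in\M_N$ has a unique expansion $A=\sum_a c_a\xi_a$ with $c_a\in\C$. Pairing both sides with $\xi_b$ via the complex-\emph{bilinear} trace form $(X,Y)\mapsto N\Tr(XY)$, and using the orthonormality relation $-N\Tr(\xi_a\xi_b)=\langle\xi_a,\xi_b\rangle_{\u_N}=\delta_{ab}$ from (\ref{e.innproduN}), we get $N\Tr(\xi_b A)=-c_b$. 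Substituting back gives $A=-N\sum_a\Tr(\xi_a A)\xi_a=-N^2\sum_a\tr(\xi_a A)\xi_a$, which is exactly (\ref{e.ma2}). It is essential here to use the symmetric bilinear form $N\Tr(XY)$, \emph{not} the Hermitian one: the former is the natural $\C$-bilinear extension to $\M_N$ of the real inner product on $\u_N$.

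For (\ref{e.ma1}), apply the identity (\ref{e.ma2}) just proved to the matrix unit $A=E_{ml}$. Since $\tr(E_{ml}\xi_a)=\tfrac1N(\xi_a)_{lm}$, reading off the $(j,k)$ entry of both sides of (\ref{e.ma2}) yields the entrywise relation $\sum_a(\xi_a)_{lm}(\xi_a)_{jk}=-\tfrac1N\delta_{mj}\delta_{lk}$. Then for arbitrary $A\in\M_N$,
\[ \Big(\sum_a \xi_a A\xi_a\Big)_{jk} = \sum_{l,m}\Big(\sum_a (\xi_a)_{jl}(\xi_a)_{mk}\Big)A_{lm} = -\frac1N\sum_{l,m}\delta_{lm}\delta_{jk}A_{lm} = -\frac1N\delta_{jk}\Tr(A) = -\tr(A)(I_N)_{jk}, \]
which is precisely (\ref{e.ma1}).

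I do not anticipate a serious obstacle; the only point that genuinely requires care is the first step — recognizing $\beta_N$ as a \emph{complex} basis of $\M_N$ and pairing against the symmetric bilinear trace form rather than the Hermitian inner product. An alternative route to (\ref{e.ma1}) is available and perhaps more conceptual: the map $A\mapsto\sum_a\xi_a A\xi_a$ is independent of the choice of orthonormal basis of $(\u_N,\langle\cdot,\cdot\rangle_{\u_N})$, hence $\mathrm{Ad}_{\U_N}$-equivariant on $\M_N$; by Schur's lemma applied to the decomposition $\M_N=\C I_N\oplus\mathfrak{sl}_N(\C)$ into $\mathrm{Ad}_{\U_N}$-irreducibles, it must be a linear combination of the identity map and the map $A\mapsto\tr(A)I_N$, and the two constants are pinned down by evaluating on $A=I_N$ and on a traceless matrix. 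The entrywise computation above is, however, shorter and fully self-contained.
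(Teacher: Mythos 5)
The paper does not actually prove Proposition \ref{p.magic}; it cites \cite[Proposition 3.1]{DHK2013} and moves on, so there is no in-text proof to compare against. Judged on its own merits, your argument is correct. The key step is sound: $\u_N$ has real dimension $N^2 = \dim_\C \M_N$ and spans $\M_N$ over $\C$ (since $\gl_N = \u_N \oplus i\u_N$), so $\beta_N$ is a $\C$-basis of $\M_N$; pairing the expansion $A = \sum_a c_a \xi_a$ against the nondegenerate $\C$-bilinear form $(X,Y)\mapsto N\Tr(XY)$ — whose restriction to $\u_N$ is $-\langle\cdot,\cdot\rangle_{\u_N}$, hence $N\Tr(\xi_a\xi_b)=-\delta_{ab}$ — gives $c_a = -N\Tr(\xi_a A)$, which is exactly (\ref{e.ma2}). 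Your warning that one must use the bilinear trace form rather than the Hermitian one is well placed: the Hermitian form is conjugate-linear in one slot and would not pair cleanly with complex coefficients. The deduction of (\ref{e.ma1}) from (\ref{e.ma2}) via matrix units and the resulting entrywise completeness relation $\sum_a (\xi_a)_{jl}(\xi_a)_{mk}=-\tfrac1N\delta_{lm}\delta_{jk}$ is a routine index contraction and checks out.

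The one organizational remark worth making: the more common route (and, to the best of my recollection, the one in \cite{DHK2013}) runs the other way — first prove (\ref{e.ma1}) by noting basis-independence of $A\mapsto\sum_\xi\xi A\xi$ and computing in the explicit basis (\ref{e.beta_N}), then derive (\ref{e.ma2}) by a polarization-type argument. Your order is cleaner in one respect: it gets (\ref{e.ma2}) purely from linear algebra (complex basis plus nondegenerate bilinear form), with no need to compute with any particular basis at all, and (\ref{e.ma1}) then falls out entrywise. The Schur's-lemma alternative you sketch for (\ref{e.ma1}) is also valid and is the standard ``conceptual'' reason the answer must have the form $\alpha\,\tr(A)I_N+\beta A$; but since you would still need a basis-free way to pin down the two constants, your direct computation is indeed the shorter path. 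In short: correct, self-contained, and a legitimate alternative to the cited proof.
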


For the remainder of this section, we usually suppress the indices $r,s$ for notational convenience; so, for example, $\Delta^{j,N}\equiv \Delta^{j,N}_{r,s}$ for $1\le j \le n$.  Let $J=\{1,\ldots,n\}$ throughout.

\begin{theorem} \label{t.intertwine0} Let $j\in J$.  There are collections $\left\{Q^j_\ex\colon \ex\in\EX_J\right\}$  and $\left\{R^j_{\ex,\d}\colon \ex,\d\in\EX_J\right\}$ in $\PP(J)$ with the following properties.
\begin{itemize}
\item[(1)] For each $\ex\in\EX_J$, $Q^j_\ex$ is a finite sum of monomials of homogeneous trace degree $|\ex|$ such that
\[ \Delta^{j,N}\left([v_\ex]_N\right) = [Q^j_\ex]_N. \]
\item[(2)] For each $\ex,\d\in\EX_J$, $R^j_{\ex,\d}$ is a finite sum of monomials of homogeneous trace degree $|\ex|+|\d|$ such that
\[ r\sum_{\xi\in\beta_N}(\del_{\xi_j} [v_\ex]_N)(\del_{\xi_j} [v_\d]_N) + s\sum_{\xi\in\beta_N} (\del_{i\xi_j}[v_\ex]_N)(\del_{i\xi_j}[v_\d]_N) = \frac{1}{N^2}[R^j_{\ex,\d}]_N, \]
for any orthonormal basis $\beta_N$ of $\u_N$.
\end{itemize}
\end{theorem}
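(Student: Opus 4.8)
The plan is to reduce everything to the single-variable case $n=1$ (already handled in \cite[Section~3.4]{DHK2013}) by exploiting the product structure of $\gl_N^n$, and then to feed in the magic formulas of Proposition~\ref{p.magic}. First I would fix $j\in J$ and note that, by (\ref{e.delxij}), the derivation $\del_{\xi_j}$ acts on a trace monomial $[v_\ex]_N$ only through the $j$th slot; concretely, if $\ex=\left((j_1,\ex_1),\ldots,(j_k,\ex_k)\right)$ then $\del_{\xi_j}[v_\ex]_N(A_1,\ldots,A_n)$ is obtained by the product rule, inserting a factor $\xi A_j$ (if $\ex_m=1$) or $-A_j^\ast\xi$ (if $\ex_m=\ast$) at each occurrence of an index $j_m=j$, and taking the normalized trace. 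Thus $\del_{\xi_j}[v_\ex]_N = \sum_{m:\,j_m=j}\tr(W_m \xi A_j^{\pm})$ for appropriate words $W_m$ in the $A_i$'s and (in the $\ast$ case) a sign; the crucial point is that $\xi$ appears \emph{linearly} in each term.

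Next I would compute $\del_{\xi_j}^2[v_\ex]_N$ by applying the product rule a second time. Two kinds of terms arise: ``diagonal'' terms where both derivatives hit the same factor $A_j$, producing $\tr(\cdots \xi^2 A_j\cdots)$-type expressions, and ``off-diagonal'' terms where the two derivatives hit different factors, producing products of two traces each containing one factor of $\xi$. Summing over $\xi\in\beta_N$: the diagonal terms collapse via (\ref{e.ma1}), $\sum_\xi \xi M\xi=-\tr(M)I_N$, turning a single trace with $\xi^2$ sandwiched around a word into $-\tr(\text{word})$, i.e.\ a product of (at most two) trace monomials of total degree $|\ex|$; the off-diagonal terms of the form $\sum_\xi \tr(M\xi)\tr(M'\xi)$ require the identity $\sum_\xi \tr(M\xi)\tr(M'\xi)=-\frac{1}{N^2}\tr(MM')$, which follows from (\ref{e.ma2}) — this produces the $\frac{1}{N^2}$ factor and accounts for why only the ``clean'' part survives to leading order. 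Collecting the surviving diagonal contributions defines $Q^j_\ex$ as a polynomial expression in the $v_\delta$'s, manifestly a finite sum of monomials, and a degree bookkeeping shows homogeneity of trace degree $|\ex|$ (the Laplacian neither creates nor destroys letters: $\xi^2$ is absorbed, leaving the same word length). Basis-independence of $Q^j_\ex$ is automatic since $\Delta^{j,N}$ is intrinsically defined, but one also sees it directly from the basis-independence of (\ref{e.ma1}).

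For part (2), the carré-du-champ type quantity $r\sum_\xi(\del_{\xi_j}[v_\ex]_N)(\del_{\xi_j}[v_\d]_N) + s\sum_\xi(\del_{i\xi_j}[v_\ex]_N)(\del_{i\xi_j}[v_\d]_N)$ is, after expanding via the product rule, a sum of terms of the exact shape $\sum_\xi \tr(M\xi^{\pm})\tr(M'\xi^{\pm})$ (and, with $i\xi$ in place of $\xi$, differing only by signs $i^2=-1$ — this is precisely where the $r$ and $s$ coefficients conspire: the cross-terms mixing $\u_N$ and $i\u_N$ contributions cancel or combine, and one should track that carefully). Each such term is handled by (\ref{e.ma2}): $\sum_{\xi\in\beta_N}\tr(M\xi)\xi = -\frac{1}{N^2}M$, hence $\sum_\xi\tr(M\xi)\tr(M'\xi) = -\frac{1}{N^2}\tr(MM')$, giving $\frac{1}{N^2}$ times a trace monomial of degree $|\ex|+|\d|$ (one letter from each word is ``consumed'' into $\xi$ and then reunited, preserving total length). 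Summing these defines $R^j_{\ex,\d}$, again a finite sum of monomials, homogeneous of trace degree $|\ex|+|\d|$, and basis-independent by the basis-independence of (\ref{e.ma2}). The main obstacle I anticipate is the careful sign and coefficient bookkeeping in combining the $\u_N$ and $i\u_N$ pieces — in particular verifying that, for $\del_{i\xi_j}$, replacing $\xi$ by $i\xi$ in the first-derivative formula introduces exactly the factors of $i$ (and hence of $-1$ upon squaring or pairing) that make the $s$-weighted sum reduce through the \emph{same} magic formulas as the $r$-weighted sum — this is the place where a naive computation can go wrong, and it is essentially the $n$-slot generalization of the bookkeeping already done in \cite[Section~3.4]{DHK2013}, which I would follow closely.
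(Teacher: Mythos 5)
Your plan has the right ingredients --- the two magic formulas (\ref{e.ma1}) and (\ref{e.ma2}) --- but it misassigns them in part (1), and that misassignment would produce the wrong $Q^j_\ex$. The issue is your characterization of the ``off-diagonal'' terms in $\del_{\xi_j}^2[v_\ex]_N$. Here $[v_\ex]_N$ is a \emph{single} trace monomial, so when the two derivatives land on different occurrences of $A_j$ inside it, the result is still a \emph{single} trace with two insertions of $\xi$, of the form $\tr(W_1\xi W_2\xi W_3)$ --- \emph{not} a product $\tr(M\xi)\tr(M'\xi)$ of two traces each containing one $\xi$. Summing over $\beta_N$ and using cyclicity together with (\ref{e.ma1}), $\sum_\xi\xi M\xi=-\tr(M)I_N$, such a term becomes $-\tr(W_2)\tr(W_3W_1)$, a product of two trace monomials with \emph{no} factor of $1/N^2$. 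These terms are not suppressed and they genuinely belong in $Q^j_\ex$: compare the paper's formula $Q^{j,\pm}_\ex = n_\pm(\ex)v_\ex + 2\sum_{k<\ell}\pm\delta_{j,j_k}\delta_{j,j_\ell}v_{\ex_{k,\ell}}v_{\ex'_{k,\ell}}$, where both the diagonal term and the off-diagonal products appear at the same order. Part (1) asserts the \emph{exact} identity $\Delta^{j,N}([v_\ex]_N) = [Q^j_\ex]_N$, so there is no ``surviving to leading order'' --- nothing is dropped. Your statement that ``collecting the surviving diagonal contributions defines $Q^j_\ex$'' would omit these off-diagonal products.

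The expressions $\sum_\xi\tr(M\xi)\tr(M'\xi)$ and magic formula (\ref{e.ma2}) --- and the $1/N^2$ --- appear only in part (2), where one genuinely differentiates two \emph{different} trace monomials and multiplies the results; your description of part (2) is essentially correct. The confusion seems to come from implicitly thinking about $\Delta^{j,N}$ acting on a general trace polynomial $P$ (where both phenomena co-occur, as in Theorem \ref{t.intertwine1}) rather than on the single monomial $[v_\ex]_N$ as part (1) requires. If you separate the two clearly --- off-diagonal inside one trace $\Rightarrow$ (\ref{e.ma1}), $O(1)$, contributes to $Q$; product of first derivatives across two traces $\Rightarrow$ (\ref{e.ma2}), $O(1/N^2)$, contributes to $R$ --- the rest of your argument (including the $\beta_N^\pm$ sign bookkeeping, which is handled in the paper exactly as you anticipate) goes through.
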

\noindent Please note that $Q^j_\ex$ and $R^j_{\varepsilon,\delta}$ {\em do not depend on $N$}.  The $1/N^2$ in (2) comes from the magic formula (\ref{e.ma2}), as we will see in the proof.

\begin{proof} Fix $\EX_J \ni \ex=((j_1,\ex_1),\ldots,(j_m,\ex_m))$; then $[v_\ex]_N(A_1,\ldots,A_n) = \tr(A_{j_1}^{\ex_1}\cdots A_{j_m}^{\ex_m})$.  Applying the product rule, for any $\xi\in\beta_N$ we have
\begin{align} \label{e.diff1} \del_{\xi_j}^2\left([v_\ex]_N\right) &= \sum_{k=1}^m \delta_{j,j_k} \tr(A_{j_1}^{\ex_1}\cdots (A_{j_k}\xi^2)^{\ex_k} \cdots A_{j_m}^{\ex_m}) \\ \label{e.diff2}
& \qquad + 2\sum_{1\le k<\ell\le m} \delta_{j,j_k}\delta_{j,j_\ell} \tr(A_{j_1}^{\ex_1}\cdots (A_{j_k}\xi)^{\ex_k}\cdots (A_{j_\ell}\xi)^{\ex_\ell} \cdots A_{j_m}^{\ex_m}).
\end{align}
Similarly, $\del_{i\xi_j}^2$ is given by the same formula but possibly with some minus signs in some of the terms (depending on $\ex_k,\ex_\ell$).  For convenience, let $\beta^+_N = \beta_N$ and $\beta^-_N = i\beta_N$.  Magic formula (\ref{e.magic}) gives $\sum_{\xi\in\beta_N^\pm} \xi^2 = \mp I_N$, and so summing over $\beta_N^\pm$ we have, for each $k$,
\[ \sum_{\xi\in\beta_N^\pm} \tr(A_{j_1}^{\ex_1}\cdots (A_{j_k}\xi^2)^{\ex_k} \cdots A_{j_m}^{\ex_m}) = \pm [v_\ex]_N, \]
where the $\pm$ on the left and right do not necessarily match (we will not keep careful track of signs through this proof).  Thus, (\ref{e.diff1}) summed over $\beta_N^\pm$ gives some integer multiple $n_j^\pm(\ex)$ of $[v_\ex]_N$.  Summing the terms in (\ref{e.diff2}) over $\xi\in\beta_N^\pm$, using (\ref{e.ma1}), yields
\[ \sum_{\xi\in\beta_N^\pm} \tr(A_{j_1}^{\ex_1}\cdots (A_{j_k}\xi)^{\ex_k}\cdots (A_{j_\ell}\xi)^{\ex_\ell} \cdots A_{j_m}^{\ex_m}) = \pm [v_{\ex_{k,\ell}}]_N[v_{\ex_{k,\ell}'}]_N, \]
where $\ex_{k,\ell}$ is a substring of $\ex$ (running between index $k$ or $k+1$ and index $\ell-1$ or $\ell$, depending on $\ex_k,\ex_\ell$) and $\ex_{k,\ell}'$ is the concatenation of the two remaining substrings of $\ex$ when $\ex_{k,\ell}$ is removed.  Hence, define
\[ Q^{j,\pm}_{\ex} = n_\pm(\ex)v_\ex + 2\sum_{1\le k<\ell\le m} \pm \delta_{j,j_k}\delta_{j,j_\ell} v_{\ex_{k,\ell}}v_{\ex_{k,\ell}'}.  \]
Note that $|\ex| = |\ex_{k,\ell}|+|\ex_{k,\ell}'|$ for each $k,\ell$; so $Q^{j,\pm}_\ex$ are homogeneous of trace degree $|\ex|$.  The above argument shows that
\[ \sum_{\xi\in\beta_N^\pm} \del_{\xi_j}^2 [v_\ex]_N = [Q^{j,\pm}_\ex]_N, \]
and so setting $Q^j_\ex = rQ^{j,+}_\ex + sQ^{j,-}_\ex$ completes item (1) of the theorem.

For item (2), fix $\EX_J \ni \d = ((h_1,\d_1),\ldots,(h_{p},\d_{p}))$; then $[v_\d]_N(A_1,\ldots,A_n) = \tr(A^{\d_1}_{h_1}\cdots A^{\d_{p}}_{h_p})$.  Thus, for $\xi\in\beta_N$,
\begin{equation} \label{e.diff3} (\del_\xi [v_\ex]_N)(\del_\xi [v_\d]_N) = \sum_{k=1}^m\sum_{\ell=1}^p \delta_{j,j_k}\delta_{j,h_\ell}\tr(A_{j_1}^{\ex_1}\cdots (A_{j_k}\xi)^{\ex_k}\cdots A_{j_m}^{\ex_m})\tr(A_{h_1}^{\d_1}\cdots (A_{h_\ell}\xi)^{\d_\ell} \cdots A_{h_p}^{\d_p}). \end{equation}
(To be clear: the terms $\delta_{j,j_k}\delta_{j,h_\ell}$ are indicator functions, not related to the string $\delta\in\EX_J$.)  Taking $\del_{i\xi_j}$ instead yields the same formula, possibly with some minus signs inside the sum (depending on $\ex_k$ and $\d_\ell$).  We can write each term in (\ref{e.diff3}) in the form
\[ \pm \tr(\xi A^{\ex^{(k)}})\tr(\xi A^{\d^{(\ell)}}) \]
where $\ex^{(k)}$ and $\d^{(\ell)}$ are certain cyclic permutations of $\ex$ and $\d$.  Using (\ref{e.ma2}), summing over $\xi\in\beta_N^\pm$ then yields
\[ \frac{1}{N^2}\sum_{k=1}^m\sum_{\ell=1}^p \pm \delta_{j,j_k}\delta_{j,h_\ell} [v_{\ex^{(k)}\d^{(\ell)}}]_N, \]
where $\ex^{(k)}\d^{(\ell)}$ denotes the concatenation; in particular, $|\ex^{(k)}\d^{(\ell)}| = |\ex|+|\d|$.  Thus, setting
\[ R^{j,\pm}_{\ex,\d} = \sum_{k=1}^m\sum_{\ell=1}^p \pm \delta_{j,j_k}\delta_{j,h_\ell} v_{\ex^{(k)}\d^{(\ell)}} \]
(where the $\pm$ on the two sides do not necessarily match), we have shown that
\[ \sum_{\xi\in\beta_N^\pm} (\del_{\xi_j}[v_\ex]_N)(\del_{\xi_j}[v_\d]_N) = \frac{1}{N^2}[R^{j,\pm}_{\ex,\d}]_N. \]
Set $R^j_{\ex,\d} \equiv r R^{j,+}_{\ex,\d} + s R^{j,-}_{\ex,\d}$; then $R^j_{\ex,\d}$ has homogeneous trace degree $|\ex|+|\d|$, and so satisfies item (2), concluding the proof of the theorem.  \end{proof}

\begin{theorem}[Intertwining Formula] \label{t.intertwine1} For $j\in J$, let $\left\{Q^j_\ex\colon\ex\in\EX_J\right\}$ and $\left\{R^j_{\ex,\d}\colon \ex,\d\in\EX_J\right\}$ be the collections in $\PP(J)$ given in Theorem \ref{t.intertwine0}.  Define the following operators on $\PP(J)$:
\begin{equation} \D^j_{r,s} = \sum_{\ex\in\EX_J} Q^j_{\ex}\frac{\del}{\del v_\ex} \qquad \text{and} \qquad \L^j_{r,s} = \sum_{\ex,\d\in\EX_J} R^j_{\ex,\d}\frac{\del^2}{\del v_\ex \del v_\d}. \end{equation}
Then $\D^j_{r,s}$ and $\L^j_{r,s}$ preserve trace degree (when $(r,s)\ne(0,0)$), and, for all $P\in\PP(J)$,
\begin{equation} \label{e.intertwine1} \Delta^{j,N}_{r,s}([P]_N) = \left[\left(\D^j_{r,s} + \frac{1}{N^2}\L^j_{r,s}\right)P\right]_N \end{equation}
\end{theorem}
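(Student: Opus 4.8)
The plan is to deduce the intertwining formula from Theorem \ref{t.intertwine0} by linearity and the Leibniz rule, viewing $\Delta^{j,N}_{r,s}$ as a second-order differential operator acting on trace polynomials. First I would record the key fact that, for a fixed orthonormal basis $\beta_N$ of $\u_N$, equation (\ref{e.DeltarsN}) gives $\Delta^{j,N}_{r,s} = \sum_{\xi\in\beta_N}(r\,\del_{\xi_j}^2 + s\,\del_{i\xi_j}^2)$; each $\del_{\xi_j}$ and $\del_{i\xi_j}$ is a first-order derivation on $C^\infty(\GL_N^n)$, so Leibniz applies. Writing a general $P\in\PP(J)$ as a $\C$-linear combination of monomials $v_{\ex^{(1)}}\cdots v_{\ex^{(m)}}$ and using that $[P]_N$ is the corresponding product of the functions $[v_{\ex^{(i)}}]_N$, I expand $\del_{\xi_j}^2\big([v_{\ex^{(1)}}]_N\cdots[v_{\ex^{(m)}}]_N\big)$ by the Leibniz rule for a second-order operator: it produces a sum of ``diagonal'' terms $(\del_{\xi_j}^2[v_{\ex^{(i)}}]_N)\prod_{p\ne i}[v_{\ex^{(p)}}]_N$ and ``cross'' terms $2(\del_{\xi_j}[v_{\ex^{(i)}}]_N)(\del_{\xi_j}[v_{\ex^{(i')}}]_N)\prod_{p\ne i,i'}[v_{\ex^{(p)}}]_N$ for $i<i'$, and similarly for $\del_{i\xi_j}^2$ with the weight $s$.

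Now I sum over $\xi\in\beta_N$. By Theorem \ref{t.intertwine0}(1), the diagonal terms collapse: $\sum_\xi (r\,\del_{\xi_j}^2 + s\,\del_{i\xi_j}^2)[v_{\ex^{(i)}}]_N = [Q^j_{\ex^{(i)}}]_N$, which contributes $\sum_i [Q^j_{\ex^{(i)}}]_N \prod_{p\ne i}[v_{\ex^{(p)}}]_N = \big[\sum_i Q^j_{\ex^{(i)}}\,\del P/\del v_{\ex^{(i)}}\big]_N$ — exactly $[\D^j_{r,s}(\text{monomial})]_N$. By Theorem \ref{t.intertwine0}(2), the cross terms collapse to $\frac{1}{N^2}[R^j_{\ex^{(i)},\ex^{(i')}}]_N$ times the complementary product, contributing $\frac{1}{N^2}\big[\sum_{i<i'} R^j_{\ex^{(i)},\ex^{(i')}}\,\del^2 P/(\del v_{\ex^{(i)}}\del v_{\ex^{(i')}})\big]_N$; symmetrizing $R$ in its two indices (or noting the factor of $2$ absorbs into the unordered sum) identifies this with $\frac{1}{N^2}[\L^j_{r,s}(\text{monomial})]_N$. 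Summing over monomials and using linearity yields (\ref{e.intertwine1}) for all $P$. The degree-preservation claim is immediate: $Q^j_\ex$ is homogeneous of trace degree $|\ex|$, so replacing $v_\ex$ by $Q^j_\ex$ in a monomial preserves total trace degree; likewise $R^j_{\ex,\d}$ has trace degree $|\ex|+|\d|$, matching the degree lost by the double derivative $\del^2/(\del v_\ex\del v_\d)$, so $\L^j_{r,s}$ also preserves trace degree (when $(r,s)\ne(0,0)$ so that the operators do not vanish identically).

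One point requiring a little care is that Theorem \ref{t.intertwine0} as stated computes $\Delta^{j,N}$ and the bilinear ``carré du champ'' expression applied to individual \emph{linear} trace polynomials $[v_\ex]_N$, whereas $\D^j_{r,s}$ and $\L^j_{r,s}$ are genuine differential operators on the full commutative algebra $\PP(J)$; the content of this theorem is precisely that the Leibniz expansion of $\Delta^{j,N}_{r,s}$ on products is matched termwise by these abstract operators. I expect the main (though still routine) obstacle to be bookkeeping: verifying that the cross-term combinatorics — the factor of $2$, the unordered-versus-ordered pair sums, and the symmetrization of $R^j_{\ex,\d}$ — line up so that $\sum_{\ex,\d} R^j_{\ex,\d}\,\del^2/(\del v_\ex\del v_\d)$ is exactly the second-order part of the Leibniz expansion and not off by a constant. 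Since $R^j_{\ex,\d}$ is defined by summing over ordered pairs $(k,\ell)$ in the proof of Theorem \ref{t.intertwine0} while the Leibniz cross terms naturally appear with a factor $2$ over unordered pairs $i<i'$, the identification is consistent provided one reads $\sum_{\ex,\d}$ as a sum over all ordered pairs; I would state this convention explicitly. Finally, because the identity (\ref{e.intertwine1}) is an equality of polynomial functions in the matrix entries that holds for every $N$ and every $\beta_N$, and $Q^j_\ex, R^j_{\ex,\d}$ are $N$-independent, the formula is a genuine identity in $\PP(J)$ after evaluation, completing the proof.
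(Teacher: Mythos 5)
Your argument is correct and is in substance the same as the paper's: both deduce the intertwining formula from Theorem~\ref{t.intertwine0} by expanding $\Delta^{j,N}_{r,s}$ as a second-order operator acting on trace polynomials. The one technical difference is in how the expansion is organized. You expand $P$ into monomials and apply the Leibniz rule to the product of trace functions, which forces you to track the factor of $2$ on unordered cross terms, reconcile ordered versus unordered index sums, and implicitly handle repeated variables (e.g.\ $P=v_\ex^3$) --- bookkeeping you rightly flag as the delicate part. The paper avoids all of this by introducing the evaluation map $\mx{V}_N\colon\GL_N^n\to\M_N^{\EX_J}$ with $[P]_N = P\circ\mx{V}_N$ and applying the chain rule directly to the composition: the formal partial derivatives $\del P/\del v_\ex$ and $\del^2 P/(\del v_\ex\,\del v_\d)$ automatically encode the correct multiplicities and symmetrization, so the sums over ordered pairs $(\ex,\d)$ in the definition of $\L^j_{r,s}$ line up termwise with the chain-rule output without any case analysis. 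Your route arrives at the same identity, but the chain-rule formulation is the cleaner way to make the ``Leibniz expansion is matched termwise'' idea precise, and it is worth adopting to eliminate the residual combinatorial worry in your writeup.
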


\begin{proof} The proof is almost identical to the proof of \cite[Theorem 3.26]{DHK2013}; we repeat it here.  Let $\mx{V}_N\colon\GL_N^n\to \M_N^{\EX_J}$ be the map
\[ \left(\mx{V}_N(A_1,\ldots,A_n)\right)((j_1,\ex_1),\ldots,(j_m,\ex_m)) = \tr(A_{j_1}^{\ex_1}\cdots A_{j_m}^{\ex_m}). \]
Then, by definition, $[P]_N = P\circ\mx{V}_N$.  By the chain rule, if $\xi \in \gl_N$ then
\begin{align*}
\del_{\xi_j}^2 P_N = \del_{\xi_j}^2 (P\circ\mx{V}_N) &  = \sum_{\ex\in\EX} \del_{\xi_j}\left[\left(\frac{\del P}{\partial v_{\ex}}\right)(\mx{V}_N)\cdot  \del_{\xi_j}[v_\ex]_N\right]  \\
&= \sum_{\ex\in\EX}\left(\frac{\del P}{\del v_\ex}\right)(\mx{V}_N)\cdot  \del_{\xi_j}^{2}([v_\ex]_N)+ \sum_{\ex,\d\in\EX} \left(\frac{\del^2 P}{\del v_\ex \del v_\d}\right)(\mx{V}_N)\cdot \left(\partial_{\xi_j}[v_\ex]_N\right)\left(\del_{\xi_j} [v_\d]_N\right)
\end{align*}
from which it follows that
\begin{align*}
\Delta^{j,N}_{r,s} P_{N} &  = \sum_{\ex\in\EX} \left(\frac{\del P}{\del v_{\ex}}\right)(\mx{V}_N)\cdot  \Delta^{j,N}_{r,s}([v_\ex])\\
&  +\sum_{\ex,\d \in\EX} \left(\frac{\del ^2 P}{\partial v_{\ex}\partial v_{\d}}\right)(\mx{V}_N)\cdot  \left[ r\sum_{\xi\in\beta_N}(\del_{\xi_j}[v_\ex])(\del_{\xi_j}[v_\d]) + s \sum_{\xi\in i\beta_N} (\del_{\xi_j}[v_\ex])(\del_{\xi_j}[v_\d]_N)  \right].
\end{align*}
Combining this equation with the results of Theorem \ref{t.intertwine0} completes
the proof.  \end{proof}

This prompts us to define the following operators.
\begin{definition} Let $\mx{t}= (t_1,\ldots,t_n)$ for some $t_1,\ldots,t_n\ge 0$.  Define
\[ \D^\mx{t}_{r,s} = \frac12\sum_{j=1}^n t_j\D^j_{r,s}, \qquad \L^\mx{t}_{r,s} = \frac12\sum_{j=1}^n t_j\L^j_{r,s}. \] \end{definition}

\begin{corollary} \label{c.intertwine} For any $\mx{t}=(t_1,\ldots,t_n)\in\R_+^n$, and $d\in\N$,
$\D^\mx{t}_{r,s}$ and $\L^\mx{t}_{r,s}$ preserve the finite dimensional space $\PP_d(J)$, and
\[ e^{\frac12(t_1\Delta^{1,N}_{r,s}+\cdots +t_n\Delta^{n,N}_{r,s})} P_N = [e^{\D^\mx{t}_{r,s} + \frac{1}{N^2}\L^\mx{t}_{r,s}} P]_N, \qquad P\in\PP_d(J). \]
In particular, $e^{\D^\mx{t}_{r,s}+\frac{1}{N^2}\L^\mx{t}_{r,s}}$ and $e^{\D^\mx{t}_{r,s}}$ are well-defined operators on the space $\PP(J)$.
\end{corollary}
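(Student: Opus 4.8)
The plan is to derive this by exponentiating Theorem~\ref{t.intertwine1}, reducing all analysis to the finite-dimensional spaces $\PP_d(J)$. First I would dispose of the purely algebraic assertions. By Theorem~\ref{t.intertwine0}, in the defining sum for $\D^j_{r,s}$ each coefficient $Q^j_\ex$ is homogeneous of trace degree $|\ex|$, while the operator $\del/\del v_\ex$ lowers trace degree by $|\ex|$; hence $\D^j_{r,s}$ carries a monomial of trace degree $k$ to a finite sum of monomials of trace degree $k$, and the same holds for $\L^j_{r,s}$ because $R^j_{\ex,\d}$ is homogeneous of trace degree $|\ex|+|\d|$ and $\del^2/\del v_\ex\,\del v_\d$ lowers trace degree by $|\ex|+|\d|$. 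Consequently $\D^\mx{t}_{r,s}$, $\L^\mx{t}_{r,s}$ and $\D^\mx{t}_{r,s}+\tfrac1{N^2}\L^\mx{t}_{r,s}$ --- each a real-linear combination of the $\D^j_{r,s}$ and $\L^j_{r,s}$ --- restrict to linear endomorphisms of every finite-dimensional $\PP_d(J)$. Their exponentials are therefore given by convergent power series on each $\PP_d(J)$, and since these restrictions are consistent and $\PP(J)=\bigcup_{d\ge1}\PP_d(J)$, we obtain well-defined operators $e^{\D^\mx{t}_{r,s}}$ and $e^{\D^\mx{t}_{r,s}+\frac1{N^2}\L^\mx{t}_{r,s}}$ on all of $\PP(J)$, each preserving every $\PP_d(J)$. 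This settles the first sentence and the ``in particular'' clause.

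For the main identity, fix $d\in\N$ and $P\in\PP_d(J)$, and write $L_N=\D^\mx{t}_{r,s}+\tfrac1{N^2}\L^\mx{t}_{r,s}=\tfrac12\sum_{j=1}^n t_j\big(\D^j_{r,s}+\tfrac1{N^2}\L^j_{r,s}\big)$, an endomorphism of $\PP_d(J)$, and $T_N=\tfrac12\sum_{j=1}^n t_j\Delta^{j,N}_{r,s}$, which is elliptic and essentially self-adjoint and so generates the heat semigroup $e^{\sigma T_N}$, with $\big(e^{\sigma T_N}f\big)(I_N^n)=\int_{\GL_N^n} f\,d\mu^{n,N}_{r,s;\sigma t_1,\ldots,\sigma t_n}$. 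Multiplying the intertwining identity of Theorem~\ref{t.intertwine1} by $\tfrac12 t_j$ and summing over $j$ yields $T_N\big([Q]_N\big)=[L_NQ]_N$ for all $Q\in\PP(J)$. Now compare the two curves $\sigma\mapsto e^{\sigma T_N}[P]_N$ and $\sigma\mapsto[e^{\sigma L_N}P]_N$ on $\sigma\in[0,1]$. Both equal $[P]_N$ at $\sigma=0$, and both take values in the finite-dimensional space $\mathcal{Q}_d$ of polynomial functions on $\GL_N^n$ of degree $\le d$ in the matrix entries and their complex conjugates --- for the second because $L_N$ preserves $\PP_d(J)$, and for the first from the probabilistic representation discussed below. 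On $\mathcal{Q}_d$ the differential operator $T_N$ acts boundedly (it preserves polynomial degree), and $\tfrac{d}{d\sigma}[e^{\sigma L_N}P]_N=[L_Ne^{\sigma L_N}P]_N=T_N\big([e^{\sigma L_N}P]_N\big)$ by the intertwining identity, while $\tfrac{d}{d\sigma}e^{\sigma T_N}[P]_N=T_N e^{\sigma T_N}[P]_N$; thus both curves solve the same finite-dimensional linear ODE $f'=T_N f$ with $f(0)=[P]_N$, so they coincide. Evaluating at $\sigma=1$ gives $e^{T_N}[P]_N=[e^{L_N}P]_N$, which is the asserted formula.

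The step I expect to require the most care --- though not a deep one --- is the analytic meaning of applying $e^{\sigma T_N}$ to the unbounded, polynomially growing function $[P]_N$ on the non-compact group $\GL_N^n$, together with the differentiation rule $\tfrac{d}{d\sigma}e^{\sigma T_N}[P]_N=T_N e^{\sigma T_N}[P]_N$ used above. I would handle this probabilistically. By left-invariance of the $\Delta^{j,N}_{r,s}$ and Lemma~\ref{l.prodmu},
\[ \big(e^{\sigma T_N}[P]_N\big)(A_1,\ldots,A_n)=\E\big([P]_N(A_1B^{1,N}_{r,s}(\sigma t_1),\ldots,A_nB^{n,N}_{r,s}(\sigma t_n))\big) \]
for independent copies $B^{1,N}_{r,s},\ldots,B^{n,N}_{r,s}$ of the $\GL_N$ Brownian motion. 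Since each $B^{j,N}_{r,s}(\cdot)\in\M_N\otimes L^{\infty-}$ while $[P]_N$ is a polynomial of degree $\le d$ in the matrix entries and their conjugates, the right-hand side is finite, is again a polynomial of degree $\le d$ in the entries of $(A_1,\ldots,A_n)$ and their conjugates (hence lies in $\mathcal{Q}_d$), and --- by It\^o's formula applied to the matrix SDE (\ref{e.mSDE-B}) via Lemma~\ref{l.mIto}, using that $\tfrac12\Delta^N_{r,s}$ is the generator of $B^N_{r,s}$ --- is differentiable in $\sigma$ with the stated derivative. Alternatively one may simply invoke the corresponding exponentiation of the intertwining formula carried out in \cite[Section~3.4]{DHK2013}. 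Beyond this routine bookkeeping there is no obstacle: once everything is placed inside the finite-dimensional, $T_N$-invariant space $\mathcal{Q}_d$, the corollary is a formal consequence of Theorem~\ref{t.intertwine1}.
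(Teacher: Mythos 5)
Your proof is correct, and the underlying idea is the same as the paper's: both sides are reduced to finite-dimensional linear algebra via Theorem~\ref{t.intertwine1} and the degree-preservation of $\D^j_{r,s}$, $\L^j_{r,s}$. The paper, however, is considerably more terse: it notes simply that since the intertwining operators preserve trace degree, one can expand both exponentials as power series acting on the finite-dimensional spaces $\PP_d(J)$ and $[\PP_d(J)]_N$, and the equality then follows term by term from $\Delta^{j,N}_{r,s}([P]_N)=[(\D^j_{r,s}+\tfrac1{N^2}\L^j_{r,s})P]_N$ applied inductively (i.e.\ $T_N^k[P]_N = [L_N^k P]_N$ for all $k$). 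Your route instead shows that $\sigma\mapsto e^{\sigma T_N}[P]_N$ and $\sigma\mapsto[e^{\sigma L_N}P]_N$ solve the same linear ODE on a finite-dimensional space $\mathcal{Q}_d$, and you invoke the probabilistic representation of the heat semigroup and It\^o's formula to justify differentiating the first curve. This ODE-uniqueness argument is a legitimate alternative and is arguably the more analytically scrupulous one, because it squarely faces the question of what $e^{\sigma T_N}$ applied to the unbounded function $P_N$ on the noncompact group $\GL_N^n$ means (the paper defers this point to the subsequent Proposition~\ref{p.conc1}, where it cites Langland's theorem). The cost is that you import the matrix SDE machinery and a probabilistic representation that the paper does not use here; the paper's power-series reading of $e^{T_N}$ on the $T_N$-invariant finite-dimensional space $[\PP_d(J)]_N$ already makes the left-hand side unambiguous and avoids that detour. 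Your first paragraph, establishing that $\D^j_{r,s}$ and $\L^j_{r,s}$ preserve trace degree from the homogeneity of $Q^j_\ex$ and $R^j_{\ex,\d}$, is exactly the observation the paper leans on (it was already recorded in Theorem~\ref{t.intertwine1}).
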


\begin{proof} Since $\D^j_{r,s}$ and $\L^j_{r,s}$ preserve trace degree, the corollary follows by expanding the exponentials as power series of operators acting on the finite dimensional spaces $\PP_d(J)$ and $[\PP_d(J)]_N$. \end{proof}

\begin{remark} Since $\Delta^{j,N}_{r,s}$ commute for $1\le j\le n$, it is natural to expect the same holds for the intertwining operators $\D^j_{r,s}$ and $\L^j_{r,s}$.  This is true, and follows easily from examining the explicit form of the coefficients of these operators given in Theorem \ref{t.intertwine0}.  One must be careful about drawing such conclusions in general, however; the map $P\mapsto P_N$ is generally not one-to-one, due to the Cayley-Hamilton Theorem.  It is {\em asymptotically} one-to-one, in the sense that its restriction to $\PP_d(J)$ is one-to-one for all sufficiently large $N$ (depending on $d$), and this can be used to prove this commutation result.  Note, however, that $[\D^j_{r,s},\L^j_{r,s}]\ne 0$ in general.  \end{remark}

\subsection{Concentration of Measure}

We restate a general linear algebra result here, given as \cite[Lemma 4.1]{DHK2013}.

\begin{lemma} \label{l.findim} Let $V$ be a finite dimensional normed $\C$-space and supposed that $D$ and $L$ are two operators on $V$. Then there exists a constant $C=C(D,L,\|\cdot\|_V)<\infty$ such that
\begin{equation} \label{e.findim1}\left\Vert e^{D+\epsilon L}-e^{D}\right\Vert _{\mathrm{End}(V)} \leq C\left\vert \epsilon\right\vert \text{ for all }\left\vert \epsilon
\right\vert \leq1, \end{equation}
where $\|\cdot\|_{\mathrm{End}(V)}$ is the operator norm on $V$. It follows that, if $\psi\in V^\ast$ is a linear functional, then
\begin{equation} \label{e.findim2} \left|\psi(e^{D+\epsilon L}x)-\psi(e^Dx)\right| \le C\|\psi\|_{V^\ast} \|x\|_V|\epsilon|, \quad x\in V, \; |\epsilon|\le 1, \end{equation}
where $\|\cdot\|_{V^\ast}$ is the dual norm on $V^\ast$.  \end{lemma}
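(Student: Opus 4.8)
The plan is to obtain the first-order bound directly from the exponential power series, viewing $\epsilon\mapsto e^{D+\epsilon L}$ as an entire $\mathrm{End}(V)$-valued map; write $\|\cdot\|$ for the operator norm on $\mathrm{End}(V)$. First I would use the telescoping identity $A^n-B^n=\sum_{k=0}^{n-1}A^k(A-B)B^{n-1-k}$ with $A=D+\epsilon L$ and $B=D$, which gives
\[
e^{D+\epsilon L}-e^D=\sum_{n\ge1}\frac{1}{n!}\big((D+\epsilon L)^n-D^n\big)=\epsilon\sum_{n\ge1}\frac{1}{n!}\sum_{k=0}^{n-1}(D+\epsilon L)^k\,L\,D^{\,n-1-k}.
\]
Then, taking operator norms, using submultiplicativity together with $\|D+\epsilon L\|\le\|D\|+\|L\|$ for $|\epsilon|\le1$, each of the $n$ terms in the inner sum is bounded by $\|L\|(\|D\|+\|L\|)^{n-1}$, so
\[
\big\|e^{D+\epsilon L}-e^D\big\|\le|\epsilon|\,\|L\|\sum_{n\ge1}\frac{n}{n!}\big(\|D\|+\|L\|\big)^{n-1}=|\epsilon|\,\|L\|\,e^{\|D\|+\|L\|},
\]
which is (\ref{e.findim1}) with $C=\|L\|\,e^{\|D\|+\|L\|}$, a constant depending only on $D$, $L$, and the norm on $V$ (through the induced operator norm).

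For the functional statement (\ref{e.findim2}) I would simply write, for $x\in V$, $\psi\in V^\ast$, and $|\epsilon|\le1$,
\[
\big|\psi(e^{D+\epsilon L}x)-\psi(e^Dx)\big|=\big|\psi\big((e^{D+\epsilon L}-e^D)x\big)\big|\le\|\psi\|_{V^\ast}\,\big\|(e^{D+\epsilon L}-e^D)x\big\|_V\le\|\psi\|_{V^\ast}\,\big\|e^{D+\epsilon L}-e^D\big\|\,\|x\|_V,
\]
and then invoke the first part to bound the operator norm by $C|\epsilon|$.

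The hard part here is essentially nonexistent: finite-dimensionality of $V$ enters only to guarantee that the exponential series converges in $\mathrm{End}(V)$ and that the operator norm is finite, and the estimate is otherwise purely formal. If one prefers to avoid manipulating the series, an equivalent route is to note that $g(\epsilon):=e^{D+\epsilon L}$ is $C^1$ with Duhamel derivative $g'(\epsilon)=\int_0^1 e^{u(D+\epsilon L)}\,L\,e^{(1-u)(D+\epsilon L)}\,du$, bound $C:=\sup_{|\epsilon|\le1}\|g'(\epsilon)\|<\infty$ using continuity of $g'$ on the compact disc $\{|\epsilon|\le1\}$, and conclude $\|g(\epsilon)-g(0)\|\le C|\epsilon|$ by the mean-value inequality; either argument yields the claim.
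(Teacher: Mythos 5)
Your proof is correct. Note that the paper itself does not prove this lemma: it is restated verbatim from \cite[Lemma 4.1]{DHK2013}, so there is no in-paper argument to compare against. Your telescoping computation is clean and self-contained: the identity $(D+\epsilon L)^n - D^n = \epsilon\sum_{k=0}^{n-1}(D+\epsilon L)^k L D^{n-1-k}$, the uniform bound $\|D+\epsilon L\|\le\|D\|+\|L\|$ for $|\epsilon|\le 1$, and the series evaluation $\sum_{n\ge 1}\tfrac{n}{n!}x^{n-1}=e^x$ are all correctly applied, yielding the explicit constant $C=\|L\|e^{\|D\|+\|L\|}$. The derivation of (\ref{e.findim2}) from (\ref{e.findim1}) is immediate as you say. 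Your alternative sketch via Duhamel's formula and the mean-value inequality is also valid and is the more common textbook route; it is marginally less elementary (requiring differentiability of $\epsilon\mapsto e^{D+\epsilon L}$) but avoids the explicit series manipulation. Either argument suffices, and finite-dimensionality is used only to guarantee finiteness of the operator norms and convergence of the exponential series in $\mathrm{End}(V)$, exactly as you observe.
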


Coupled with Corollary \ref{c.intertwine}, this gives the following.

\begin{proposition} \label{p.conc1} Let $P\in\PP(J)$.  Let $\mx{t}=(t_1,\ldots,t_n)\in\R_+^n$.  Then there is a constant $C=C(r,s,\mx{t},P)$ so that, for all $N\in\N$,
\[ \left|\int_{\GL_N^n} P_N\,d\mu^{n,N}_{r,s;\mx{t}} - \left(e^{\D^\mx{t}_{r,s}}P\right)(\mx{1})\right| \le \frac{C}{N^2}, \]
where, for $Q\in\PP(J)$, $Q(\mx{1})$ is the complex number given by evaluating all variables of $Q$ at $1$.
\end{proposition}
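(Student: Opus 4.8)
The plan is to combine the heat-kernel representation of $\mu^{n,N}_{r,s;\mx{t}}$ from Corollary \ref{c.intertwine} with the finite-dimensional perturbation estimate of Lemma \ref{l.findim}. First I would fix $P\in\PP(J)$ and choose $d = \deg(P)$, so that $P\in\PP_d(J)$, which is finite dimensional (since $J=\{1,\ldots,n\}$ is finite). By Corollary \ref{c.intertwine}, the operators $\D^\mx{t}_{r,s}$ and $\L^\mx{t}_{r,s}$ preserve $\PP_d(J)$, and we have the identity
\[ \int_{\GL_N^n} P_N\,d\mu^{n,N}_{r,s;\mx{t}} = \left(e^{\frac12(t_1\Delta^{1,N}_{r,s}+\cdots+t_n\Delta^{n,N}_{r,s})}P_N\right)(I_N^n) = \left[e^{\D^\mx{t}_{r,s}+\frac{1}{N^2}\L^\mx{t}_{r,s}}P\right]_N(I_N^n), \]
where the first equality is the definition (\ref{e.hkn}) of the heat kernel measure together with Lemma \ref{l.prodmu}.

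Next I would observe that evaluating any $Q\in\PP(J)$ via $[Q]_N$ at the identity tuple $I_N^n$ simply replaces each variable $v_\ex$ by $\tr(I_N^{\ex_1}\cdots I_N^{\ex_m}) = \tr(I_N) = 1$; that is, $[Q]_N(I_N^n) = Q(\mx{1})$, independently of $N$. Hence the displayed quantity equals $\left(e^{\D^\mx{t}_{r,s}+\frac{1}{N^2}\L^\mx{t}_{r,s}}P\right)(\mx{1})$, and the left side of the proposition's inequality becomes
\[ \left|\left(e^{\D^\mx{t}_{r,s}+\frac{1}{N^2}\L^\mx{t}_{r,s}}P\right)(\mx{1}) - \left(e^{\D^\mx{t}_{r,s}}P\right)(\mx{1})\right|. \]

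To bound this, I would apply Lemma \ref{l.findim} with $V = \PP_d(J)$ equipped with any fixed norm, $D = \D^\mx{t}_{r,s}$, $L = \L^\mx{t}_{r,s}$ (both restricted to $V$, which is legitimate by Corollary \ref{c.intertwine}), and $\epsilon = 1/N^2 \le 1$. The relevant linear functional is $\psi = \mathrm{ev}_{\mx{1}}\colon Q\mapsto Q(\mx{1})$, which is a well-defined element of $V^\ast$ with some finite dual norm $\|\psi\|_{V^\ast}$ depending only on $d$, $n$, and the chosen norm on $V$. Estimate (\ref{e.findim2}) then gives
\[ \left|\psi\!\left(e^{D+\epsilon L}P\right) - \psi\!\left(e^D P\right)\right| \le C\,\|\psi\|_{V^\ast}\,\|P\|_V\,\frac{1}{N^2}, \]
with $C = C(D,L,\|\cdot\|_V)$; absorbing $\|\psi\|_{V^\ast}$ and $\|P\|_V$ into the constant yields the claimed bound with $C = C(r,s,\mx{t},P)$ (the dependence on $r,s,\mx{t}$ entering through the coefficients of $\D^\mx{t}_{r,s}$ and $\L^\mx{t}_{r,s}$, and the dependence on $P$ through both $d=\deg(P)$ and $\|P\|_V$). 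This is essentially a bookkeeping argument once the intertwining machinery is in place; the only point requiring a little care is making sure all the finite-dimensionality and degree-preservation claims are invoked correctly so that $\D^\mx{t}_{r,s}$ and $\L^\mx{t}_{r,s}$ genuinely act as bounded operators on a single fixed finite-dimensional space independent of $N$ — but that is exactly the content of Corollary \ref{c.intertwine}, so there is no real obstacle.
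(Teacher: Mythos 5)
Your proof is correct and follows essentially the same route as the paper: invoke Corollary \ref{c.intertwine} to reduce the heat-kernel integral to $\left(e^{\D^\mx{t}_{r,s}+\frac{1}{N^2}\L^\mx{t}_{r,s}}P\right)(\mx{1})$, then apply Lemma \ref{l.findim} on $V=\PP_d(J)$ with $\psi=\mathrm{ev}_{\mx{1}}$ and $\epsilon=1/N^2$. The one small point you gloss over, which the paper flags parenthetically, is that $P_N$ is not compactly supported, so the identity $\int P_N\,d\mu^{n,N}_{r,s;\mx{t}} = \bigl(e^{\frac12\sum t_j\Delta^{j,N}_{r,s}}P_N\bigr)(I_N^n)$ does not follow directly from the defining formula (\ref{e.hkn}) but requires an extension (via Langland's theorem) to trace polynomials.
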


\begin{proof} Let $d=\deg(P)$; then $P\in\PP_d(J)$.  By definition (\ref{e.hkn}),
\[ \int_{\GL_N^n} P_N\,d\mu^{n,N}_{r,s;\mx{t}} = \left(e^{\frac12(t_1\Delta^{1,N}_{r,s} + \cdots + t_n\Delta^{n,N}_{r,s})}P_N\right)(I_N^n). \]
(To be clear: the function $P_N$ is not compactly-supported, so this does not fall strictly into the purview of (\ref{e.hkn}); that the formula extends to such trace polynomials follows from Langland's Theorem; cf.\ \cite[Theorem 2.1 (p.\ 152)]{Robinson1991}.  See \cite[Appendix A]{DHK2013} for a concise sketch of the proof.)  From Corollary \ref{c.intertwine}, therefore
\[ \int_{\GL_N^n} P_N\,d\mu^{n,N}_{r,s;\mx{t}} = \left[e^{\D^\mx{t}_{r,s}+\frac{1}{N^2}\L^\mx{t}_{r,s}}P\right]_N(I_N^n) = \left(e^{\D^\mx{t}_{r,s}+\frac{1}{N^2}\L^\mx{t}_{r,s}}P\right)(\mx{1}). \]
Note that $\psi_\mx{1}(P) = P(\mx{1})$ is a linear functional on the finite dimensional space $\PP_d(J)$; thus the result follows from (\ref{e.findim2}) by choosing any norm $\|\cdot\|_{\PP_d(J)}$ on $V=\PP_d(J)$, and setting
\[ C(r,s,\mx{t},P) = C(\D^\mx{t}_{r,s},\L^\mx{t}_{r,s},\|\cdot\|_{\PP_d(J)})\|\psi_\mx{1}\|_{\PP_d(J)^\ast}\|P\|_{\PP_d(J)}, \]
thus concluding the proof.  \end{proof}

We now come to the main theorems of this section.

\begin{theorem} \label{t.limdist} Let $(B^{1,N}_{r,s}(t))_{t\ge 0},\ldots,(B^{n,N}_{r,s}(t))_{t\ge 0}$ be independent Brownian motions on $\GL_N$.  Then these matrix processes have a  joint limit distribution: for any $m\in\N$, $j_1,\ldots,j_m\in\{1,\ldots,n\}$, $t_1,\ldots,t_n\ge 0$ and $\ex_1,\ldots,\ex_m\in\{1,\ast\}$,
\[ \lim_{N\to\infty} \E\tr(B^{{j_1},N}_{r,s}(t_{j_1})^{\ex_1}\cdots B^{{j_m},N}_{r,s}(t_{j_m})^{\ex_m}) \qquad \text{exists.} \]
\end{theorem}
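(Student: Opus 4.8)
The plan is to read Theorem \ref{t.limdist} off of the intertwining and concentration machinery already assembled in Section \ref{section Heat Kernels}, using the product structure of heat kernel measures from Lemma \ref{l.prodmu}. Put $J=\{1,\ldots,n\}$ and $\mx{t}=(t_1,\ldots,t_n)\in\R_+^n$. Given $m\in\N$, indices $j_1,\ldots,j_m\in J$ and signs $\ex_1,\ldots,\ex_m\in\{1,\ast\}$, form the word $\ex=((j_1,\ex_1),\ldots,(j_m,\ex_m))\in\EX_J$. Applying Definition \ref{d.fncalc} in the noncommutative probability space $(\M_N,\tr)$ with $a_j=B^{j,N}_{r,s}(t_j)$, we have, pointwise on $\Omega$,
\[ \tr\!\left(B^{j_1,N}_{r,s}(t_{j_1})^{\ex_1}\cdots B^{j_m,N}_{r,s}(t_{j_m})^{\ex_m}\right) = [v_\ex]_N\!\left(B^{1,N}_{r,s}(t_1),\ldots,B^{n,N}_{r,s}(t_n)\right). \]
By Lemma \ref{l.prodmu}, the joint law of the random vector $(B^{1,N}_{r,s}(t_1),\ldots,B^{n,N}_{r,s}(t_n))\in\GL_N^n$ is $\mu^{n,N}_{r,s;\mx{t}}$, so integrating over $\Omega$ gives
\[ \E\tr\!\left(B^{j_1,N}_{r,s}(t_{j_1})^{\ex_1}\cdots B^{j_m,N}_{r,s}(t_{j_m})^{\ex_m}\right) = \int_{\GL_N^n}[v_\ex]_N\,d\mu^{n,N}_{r,s;\mx{t}}. \]

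I would then invoke Proposition \ref{p.conc1} with the trace polynomial $P=v_\ex\in\PP(J)$: there is a constant $C=C(r,s,\mx{t},\ex)$, \emph{independent of $N$}, with
\[ \left|\int_{\GL_N^n}[v_\ex]_N\,d\mu^{n,N}_{r,s;\mx{t}}-\bigl(e^{\D^\mx{t}_{r,s}}v_\ex\bigr)(\mx{1})\right|\le\frac{C}{N^2}. \]
By Corollary \ref{c.intertwine} the operator $e^{\D^\mx{t}_{r,s}}$ is well defined on the finite-dimensional space $\PP_d(J)$ with $d=|\ex|$, so $\bigl(e^{\D^\mx{t}_{r,s}}v_\ex\bigr)(\mx{1})$ is a complex number not depending on $N$. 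Letting $N\to\infty$ shows that the limit in question exists and equals this number. Since words of the displayed form account for all mixed $\ast$-moments, this establishes the joint limit distribution; extending by linearity through the embedding $\Upsilon$, the limiting joint distribution is the linear functional $f\mapsto\bigl(e^{\D^\mx{t}_{r,s}}\Upsilon(f)\bigr)(\mx{1})$ on $\C\langle X_1,\ldots,X_n,X_1^\ast,\ldots,X_n^\ast\rangle$.

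There is no substantial obstacle left: the real work is the intertwining formula (Theorem \ref{t.intertwine1}) and the finite-dimensional perturbation bound underlying Proposition \ref{p.conc1}, both already in place, and Theorem \ref{t.limdist} is essentially their corollary. The one point that warrants a word of care — and which is dispatched inside the proof of Proposition \ref{p.conc1} — is that the functions $[v_\ex]_N$ are not compactly supported, so the identity $\int_{\GL_N^n}[v_\ex]_N\,d\mu^{n,N}_{r,s;\mx{t}}=\bigl(e^{\frac12(t_1\Delta^{1,N}_{r,s}+\cdots+t_n\Delta^{n,N}_{r,s})}[v_\ex]_N\bigr)(I_N^n)$ requires extending the heat semigroup beyond $C_c(\GL_N^n)$ to polynomially bounded functions (Langlands' theorem, cf.\ the proof of Proposition \ref{p.conc1}). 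Granting that, the argument above is complete.
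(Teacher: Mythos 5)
Your proposal is correct and follows essentially the same route as the paper's proof: identify the target moment with $\int_{\GL_N^n}[v_\ex]_N\,d\mu^{n,N}_{r,s;\mx{t}}$ via Lemma \ref{l.prodmu}, then apply Proposition \ref{p.conc1} with $P=v_\ex$ to conclude the limit exists and equals $\bigl(e^{\D^\mx{t}_{r,s}}v_\ex\bigr)(\mx{1})$. Your remark on the Langlands-theorem extension to non-compactly-supported trace polynomials is also the same caveat the paper records inside the proof of Proposition \ref{p.conc1}.
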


\begin{proof} Let $\mx{t}=(t_1,\ldots,t_n)$.  The given expected trace is computed in terms of the joint law $\mu^{n,N}_{r,s;\mx{t}}$ of the independent Brownian random matrices as
\[ \E\tr(B^{j_1,N}_{r,s}(t_{j_1})^{\ex_1}\cdots B^{j_m,N}_{r,s}(t_{j_m})^{\ex_m}) = \int_{\GL_N^n} \tr(A_{j_1}^{\ex_1}\cdots A_{j_m}^{\ex_m})\,\mu^{n,N}_{r,s;\mx{t}}(dA_1\cdots dA_n) = \int_{\GL_N^n} [v_{\ex}]_N\,d\mu^{n,N}_{r,s;\mx{t}} \]
where $\ex = \left((j_1,\ex_1),\ldots,(j_m,\ex_m)\right)$.  Proposition \ref{p.conc1} thus shows that the limit as $N\to\infty$ exists, and is equal to $\big(e^{\D^\mx{t}_{r,s}}v_\ex\big)(\mx{1})$.
\end{proof}

\begin{theorem} \label{t.cov0} Let $P,Q\in\PP(J)$, and let $\mx{t}=(t_1,\ldots,t_n)\in\R_+^n$.  There is a constant $C_2 = C_2(r,s,\mx{t},P,Q)$ such that
\begin{equation} \label{e.cov0} \left|\Cov_{\mu^{n,N}_{r,s;\mx{t}}}(P_N,Q_N)\right| \le \frac{C_2}{N^2}. \end{equation}
\end{theorem}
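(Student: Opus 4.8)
\section{Proof proposal for Theorem~\ref{t.cov0}}

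The plan is to carry the whole computation through the intertwining calculus of Section~\ref{section Heat Kernels}, reducing the covariance to a finite-dimensional perturbation estimate of exactly the kind furnished by Lemma~\ref{l.findim}. Write $\psi\colon\PP(J)\to\C$ for the evaluation functional $\psi(R)=R(\mx{1})$; it is an algebra homomorphism, being evaluation of polynomials at a point. Since $R\mapsto R_N$ is likewise an algebra homomorphism (Definition~\ref{d.fncalc}), $(PQ)_N=P_NQ_N$, so
\[\Cov_{\mu^{n,N}_{r,s;\mx{t}}}(P_N,Q_N)=\int_{\GL_N^n}(PQ)_N\,d\mu^{n,N}_{r,s;\mx{t}}-\Big(\int_{\GL_N^n}P_N\,d\mu^{n,N}_{r,s;\mx{t}}\Big)\Big(\int_{\GL_N^n}Q_N\,d\mu^{n,N}_{r,s;\mx{t}}\Big).\]
Into this I would substitute the exact identity $\int_{\GL_N^n}R_N\,d\mu^{n,N}_{r,s;\mx{t}}=\psi\big(e^{\D^\mx{t}_{r,s}+\frac1{N^2}\L^\mx{t}_{r,s}}R\big)$, valid for all $R\in\PP(J)$, which is established (before any error term is introduced) inside the proof of Proposition~\ref{p.conc1}. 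Setting $A_\e=\D^\mx{t}_{r,s}+\e\,\L^\mx{t}_{r,s}$ and using that $\psi$ is multiplicative, the three integrals collapse into the single clean identity
\[\Cov_{\mu^{n,N}_{r,s;\mx{t}}}(P_N,Q_N)=\psi\!\left(e^{A_\e}(PQ)-(e^{A_\e}P)(e^{A_\e}Q)\right),\qquad\e=\tfrac1{N^2}.\]

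The decisive point is then that $\D^\mx{t}_{r,s}$ is a \emph{derivation} of $\PP(J)$ --- it is the first-order operator $\tfrac12\sum_j t_j\sum_{\ex\in\EX_J}Q^j_\ex\,\frac{\del}{\del v_\ex}$ displayed in Theorem~\ref{t.intertwine1} --- so that $e^{\D^\mx{t}_{r,s}}$, which is well defined on $\PP(J)$ by Corollary~\ref{c.intertwine}, is an algebra endomorphism; hence $e^{\D^\mx{t}_{r,s}}(PQ)=(e^{\D^\mx{t}_{r,s}}P)(e^{\D^\mx{t}_{r,s}}Q)$. Fix $d=\deg P+\deg Q$ and $V=\PP_d(J)$: a finite-dimensional space, containing $P$, $Q$ and $PQ$, on which both $\D^\mx{t}_{r,s}$ and $\L^\mx{t}_{r,s}$ restrict since they preserve trace degree (Theorem~\ref{t.intertwine1}). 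The $\C$-bilinear map $B_\e(P,Q):=e^{A_\e}(PQ)-(e^{A_\e}P)(e^{A_\e}Q)$ takes values in $V$, and $\Cov_{\mu^{n,N}_{r,s;\mx{t}}}(P_N,Q_N)=\psi\big(B_{1/N^2}(P,Q)\big)$. By the preceding paragraph $B_0(P,Q)=0$; moreover $\e\mapsto B_\e(P,Q)$ is entire, being assembled from the matrix exponentials $e^{A_\e}$ on the finite-dimensional $V$ together with the fixed bilinear multiplication on $\PP(J)$, and $\|\e\,\L^\mx{t}_{r,s}\|_{\mathrm{End}(V)}\le\|\L^\mx{t}_{r,s}\|_{\mathrm{End}(V)}$ for $|\e|\le1$. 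Hence, by the elementary first-order estimate underlying Lemma~\ref{l.findim}, there is a constant $C'=C'(r,s,\mx{t},d)$ with $\|B_\e(P,Q)\|_V\le C'|\e|\,\|P\|_V\|Q\|_V$ whenever $|\e|\le1$. Applying $\psi$ and taking $\e=1/N^2$ gives
\[\big|\Cov_{\mu^{n,N}_{r,s;\mx{t}}}(P_N,Q_N)\big|\le\|\psi\|_{V^\ast}\,C'\,\|P\|_V\|Q\|_V\,\frac1{N^2},\]
which is the assertion, with $C_2=\|\psi\|_{V^\ast}C'\|P\|_V\|Q\|_V$.

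I expect the only delicate point to be uniformity of $C_2$ in $N$: the whole estimate must be carried out inside the \emph{fixed} finite-dimensional space $\PP_d(J)$, with $N$-independent $d$, so that the perturbation $\tfrac1{N^2}\L^\mx{t}_{r,s}$ genuinely has small operator norm --- and this is exactly what trace-degree preservation in Theorems~\ref{t.intertwine0}--\ref{t.intertwine1} delivers, just as in the proof of Proposition~\ref{p.conc1}; no genuinely new analytic input is required, the real content already residing in the factor $1/N^2$ produced by the magic formula~\eqref{e.ma2}. A more quantitative version of the last step replaces the smoothness bound by a Duhamel identity for the one-parameter semigroup $u\mapsto e^{uA_\e}$,
\[e^{A_\e}(PQ)-(e^{A_\e}P)(e^{A_\e}Q)=2\e\int_0^1 e^{uA_\e}\,\Gamma\!\big(e^{(1-u)A_\e}P,\ e^{(1-u)A_\e}Q\big)\,du,\]
where $\Gamma(P,Q)=\tfrac12\big(\L^\mx{t}_{r,s}(PQ)-(\L^\mx{t}_{r,s}P)Q-P(\L^\mx{t}_{r,s}Q)\big)$ is the carr\'e du champ of $\L^\mx{t}_{r,s}$ (the derivation $\D^\mx{t}_{r,s}$ contributes nothing), a fixed bilinear map on $\PP(J)$ sending $\PP_{\deg P}(J)\times\PP_{\deg Q}(J)$ into $V$; the prefactor $2\e=2/N^2$ is precisely the $1/N^2$ of~\eqref{e.ma2}, and bounding the integrand uniformly over $V$ again yields $|\Cov_{\mu^{n,N}_{r,s;\mx{t}}}(P_N,Q_N)|\le C_2/N^2$. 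Finally, if $\Cov$ is meant with a complex conjugate on the second argument, one simply replaces $Q$ by $\overline Q\in\PP(J)$ --- conjugate the coefficients and reverse each word, swapping $1\leftrightarrow\ast$ --- since $\overline{Q_N}=(\overline Q)_N$; the argument is otherwise identical.
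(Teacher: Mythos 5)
Your proposal is correct, and the underlying engine is the same as the paper's: the intertwining formula $\int R_N\,d\mu^{n,N}_{r,s;\mx{t}} = \big(e^{\D^{\mx{t}}_{r,s}+\tfrac1{N^2}\L^{\mx{t}}_{r,s}}R\big)(\mx{1})$, the observation that $e^{\D^{\mx{t}}_{r,s}}$ is an algebra homomorphism because $\D^{\mx{t}}_{r,s}$ is a derivation (so the $\e=0$ defect vanishes), and a finite-dimensional perturbation bound on $\PP_d(J)$. Where you differ in organization: the paper introduces $\Psi^N_1,\Psi^N_\ast,\Psi^N_{1,\ast}$ and their $N=\infty$ counterparts and performs a three-term triangle-inequality decomposition (equations \eqref{e.ncvar4}--\eqref{e.ncvar5}), invoking Proposition~\ref{p.conc1} once for $PQ^\ast$ and once each for $P$ and $Q^\ast$; you instead package everything into the single bilinear ``defect of multiplicativity'' $B_\e(P,Q)=e^{A_\e}(PQ)-(e^{A_\e}P)(e^{A_\e}Q)$, note $B_0\equiv0$, and apply one Lipschitz estimate of the Lemma~\ref{l.findim} type to $\e\mapsto B_\e$ on the fixed finite-dimensional space $\PP_{\deg P+\deg Q}(J)$. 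The Duhamel / carr\'e-du-champ reformulation at the end is a nice alternative that makes the extracted factor of $\e=1/N^2$ explicit; it is not in the paper but is a clean reorganization of the same computation. Your handling of the conjugate on the second argument of $\Cov$ (replacing $Q$ by $\overline Q$ with $1\leftrightarrow\ast$ and word reversal) is precisely the content of Lemma~\ref{l.CC}. In short, same ingredients, a somewhat more compact packaging; no gaps.
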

\noindent Theorem \ref{t.cov0} is a generalization of \cite[Proposition 4.13]{Kemp2013a}, and the proof is very similar.  First, we need a lemma on intertwining complex conjugation, which is elementary to prove and left to the reader; cf.\  \cite[Lemma 3.11]{Kemp2013a}.

\begin{lemma} \label{l.CC} Given $\ex\in\EX_J$, define $\ex^\ast\in\EX_J$ by $\left((j_1,\ex_1),\ldots,(j_n,\ex_n)\right)^\ast = \left((j_n,\ex_n^\ast),\ldots,(j_1,\ex_1^\ast)\right)$ where $1^\ast = \ast$ and $\ast^\ast = 1$.  Define $\CC\colon\PP(J)\to\PP(J)$ to be the conjugate linear homomorphism satisfying $\CC(v_\ex) = v_{\ex^\ast}$ for all $\ex\in\EX_J$.  Then for all $N\in\N$
\begin{equation} \label{e.CC} \overline{P_N} = [\CC(P)]_N, \qquad P\in\PP(J). \end{equation}
\end{lemma}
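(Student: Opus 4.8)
The plan is to reduce (\ref{e.CC}) to a one-line check on the algebra generators $v_\ex$, using that both sides of the claimed identity are conjugate-linear algebra homomorphisms out of $\PP(J)$.

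First I would record the structural observation. The evaluation map $P\mapsto P_N$ of Definition \ref{d.fncalc} is, by construction, an algebra homomorphism from $\PP(J)$ into the (commutative) algebra of functions on $\GL_N^n$; since entrywise complex conjugation of matrices is a conjugate-linear ring homomorphism, the composite $P\mapsto\overline{P_N}$ is a conjugate-linear algebra homomorphism. On the other side, $\CC$ is conjugate-linear and multiplicative by its definition, and $Q\mapsto[Q]_N$ is an algebra homomorphism, so $P\mapsto[\CC(P)]_N$ is again a conjugate-linear algebra homomorphism. As $\PP(J)=\C[\mx{v}_J]$ is generated as a unital algebra by $\{v_\ex\colon\ex\in\EX_J\}$, two conjugate-linear algebra homomorphisms that agree on every $v_\ex$ agree on all of $\PP(J)$; hence it suffices to prove (\ref{e.CC}) when $P=v_\ex$.

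Next I would carry out the generator computation. Fix $\ex=((j_1,\ex_1),\ldots,(j_m,\ex_m))\in\EX_J$, so that $[v_\ex]_N(A_1,\ldots,A_n)=\tr(A_{j_1}^{\ex_1}\cdots A_{j_m}^{\ex_m})$. Since $\tr=\frac1N\Tr$ and $\overline{\Tr M}=\Tr(M^\ast)$ for any $M\in\M_N$, and since $\ast$ is an anti-automorphism of $\M_N$ with $(A^1)^\ast=A^\ast$ and $(A^\ast)^\ast=A$, we obtain
\[ \overline{[v_\ex]_N} = \tr\big((A_{j_1}^{\ex_1}\cdots A_{j_m}^{\ex_m})^\ast\big) = \tr\big(A_{j_m}^{\ex_m^\ast}\cdots A_{j_1}^{\ex_1^\ast}\big) = [v_{\ex^\ast}]_N = [\CC(v_\ex)]_N, \]
where the last two equalities are precisely the definitions of $\ex^\ast$ (reverse the word and swap $1\leftrightarrow\ast$ in each letter) and of $\CC$. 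Combined with the reduction of the previous paragraph, this proves (\ref{e.CC}).

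I do not anticipate any real obstacle: the statement is purely a matter of the compatibility of the functional calculus of Definition \ref{d.fncalc} with the $\ast$-structure on $\M_N$, and uses neither faithfulness nor traciality beyond the trivial identity $\overline{\Tr M}=\Tr(M^\ast)$. The only point worth a moment's care is the bookkeeping that identifies the word-reversal-and-letter-swap defining $\ex^\ast$ with the anti-automorphism $\ast$ applied to the product $A_{j_1}^{\ex_1}\cdots A_{j_m}^{\ex_m}$ — but this is exactly how Lemma \ref{l.CC} is set up.
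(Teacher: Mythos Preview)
Your proposal is correct and is exactly the elementary verification the paper has in mind; indeed, the paper states that the lemma ``is elementary to prove and left to the reader,'' and your reduction to generators via the conjugate-linear multiplicativity of both sides, followed by the identity $\overline{\tr(M)}=\tr(M^\ast)$, is the intended argument.
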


\begin{proof}[Proof of Theorem \ref{t.cov0}] The covariance of $\C$-valued random variables $F,G$ is $\Cov(F,G) = \E(F\overline{G})-\E(F)\E(\overline{G})$.  Define $\D^{\mx{t},N}_{r,s} = \D_{r,s}^\mx{t} + \frac{1}{N^2}\L_{r,s}^\mx{t}$.  From Lemma \ref{l.CC}, we may write $P_N\overline{Q_N} = [PQ^\ast]_N$, and so, from (\ref{e.hkn}) and Corollary \ref{c.intertwine}, we have
\begin{equation} \label{e.covterm1} \E_{\mu^{n,N}_{r,s;\mx{t}}}(P_N\overline{Q}_N) = \left(e^{\D_{r,s}^{\mx{t},N}}(PQ^\ast)\right)(\mx{1}). \end{equation}
Similarly,
\begin{equation} \label{e.covterm2} \E_{\mu^{n,N}_{r,s;\mx{t}}}(P_N)\cdot \E_{\mu^{n,N}_{r,s;\mx{t}}}(\overline{Q_N}) = \left(e^{\D_{r,s}^{\mx{t},N}}P\right)(\mx{1})\cdot \left(e^{\D_{r,s}^{\mx{t},N}}Q^\ast\right)(\mx{1}). \end{equation}
Now, set
\begin{align} \label{e.Psi1} \Psi^N_1 \equiv \big(e^{-\D^{\mx{t},N}_{r,s}}P\big)(\mx{1}), \quad \Psi^N_\ast \equiv \big(e^{-\D^{\mx{t},N}_{r,s}}Q^\ast\big)(\mx{1}), \quad \Psi^N_{1,\ast} \equiv \big(e^{-\D^{\mx{t},N}_{r,s}}(PQ^\ast)\big)(\mx{1}), \\ \label{e.Psi2}
\Psi_1 \equiv \big(e^{-\D^\mx{t}_{r,s}}P\big)(\mx{1}), \quad\;\; \Psi_\ast \equiv \big(e^{-\D^\mx{t}_{r,s}}Q^\ast\big)(\mx{1}), \quad\;\; \Psi_{1,\ast} \equiv \big(e^{-\D^\mx{t}_{r,s}}(PQ^\ast)\big)(\mx{1}). \end{align}
Thus, (\ref{e.covterm1}) and (\ref{e.covterm2}) show that
\begin{equation} \label{e.ncvar3} \Cov_{\mu^{n,N}_{r,s;\mx{t}}}(P_N,Q_N)= \Psi^N_{1,\ast}-\Psi^N_1\Psi^N_\ast. \end{equation}
We estimate this as follows.  First
\begin{equation} \label{e.ncvar4} |\Psi^N_{1,\ast}-\Psi^N_1\Psi^N_\ast| \le |\Psi^N_{1,\ast}-\Psi_{1,\ast}| + |\Psi_{1,\ast}-\Psi_1\Psi_\ast| + |\Psi_1\Psi_\ast - \Psi_1^N\Psi^N_\ast|. \end{equation}
Referring to (\ref{e.Psi2}), note that $\D^{\mx{t}}_{r,s}$ is a first-order differential operator; it follows that $e^{\D^\mx{t}_{r,s}}$ is an algebra homomorphism, and so the second term in (\ref{e.ncvar4}) is $0$.  The first term is bounded by $\frac{1}{N^2}\cdot C(r,s,\mx{t},PQ^\ast)$ by Proposition \ref{p.conc1}.  For the third term, we add and subtract $\Psi_1^N\Psi_\ast$ to make the additional estimate
\begin{align} \nonumber |\Psi_1\Psi_\ast - \Psi_1^N\Psi_\ast^N| &\le |\Psi_\ast||\Psi_1-\Psi_1^N| + |\Psi_1^N||\Psi_\ast-\Psi_\ast^N| \\ \nonumber
&\le |\Psi_\ast||\Psi_1-\Psi_1^N|  + \big(|\Psi_1| + |\Psi_1^N-\Psi_1|)|\Psi_\ast-\Psi^N_\ast| \\ \nonumber
&\le \frac{1}{N^2}\cdot |\Psi_\ast|C(r,s,\mx{t},P) + \left(|\Psi_1| + \frac{1}{N^2}\cdot C(r,s,\mx{t},P)\right)\cdot\frac{1}{N^2}\cdot C(r,s,\mx{t},Q^\ast) \\ \label{e.ncvar5}
&= \frac{1}{N^2}\cdot\left(|\Psi_\ast|C(r,s,\mx{t},P)+|\Psi_1|C(r,s,\mx{t},Q^\ast)\right) + \frac{1}{N^4}\cdot C(r,s,\mx{t},P)C(r,s,\mx{t},Q^\ast).
\end{align}
Combining (\ref{e.ncvar5}) with (\ref{e.ncvar3}) -- (\ref{e.ncvar4}) and the following discussion shows that the constant
\begin{equation} \label{e.C_2} C_2(r,s,\mx{t},P,Q) = C(r,s,\mx{t},PQ^\ast) + C(r,s,\mx{t},P)C(r,s,\mx{t},Q^\ast) + |\Psi_\ast|C(r,s,\mx{t},P)+|\Psi_1|C(r,s,\mx{t},Q^\ast) \end{equation}
verifies (\ref{e.cov0}), proving the proposition.  \end{proof}

This brings us to the proof of Theorem \ref{t.multi-cov}.  For convenience, we restate that the desired estimate is
\begin{equation} \label{e.multi-cov2} \Cov\!\left[\tr(f(B^{1,N}_{r,s}(t_1),\ldots,B^{n,N}_{r,s}(t_n)^\ast)),\tr(g(B^{1,N}_{r,s}(t_1),\ldots,B^{n,N}_{r,s}(t_n)^\ast))\right] \le \frac{C_2}{N^2}, \end{equation}
for any $f,g\in\C\langle X_1,\ldots,X_n,X_1^\ast,\ldots,X_n^\ast\rangle$, for some constant $C_2=C_2(r,s,\mx{t},f,g)$; here $B^{1,N}_{r,s}(\cdot),\ldots,B^{n,N}_{r,s}(\cdot)$ are independent $(r,s)$-Brownian motions on $\GL_N$.

\begin{proof}[Proof of Theorem \ref{t.multi-cov}] Setting $\mx{t}=(t_1,\ldots,t_n)$, the covariance in (\ref{e.multi-cov2}) is precisely
\[ \Cov_{\mu^{n,N}_{r,s;\mx{t}}}\left([\Upsilon(f)]_N,[\Upsilon(g)]_N\right) \]
and so the result follows immediately from Theorem \ref{t.cov0}.  \end{proof}

Theorem \ref{t.multi-cov}, in the special case $f=g$, implies that the convergence to the joint limit distribution in Theorem \ref{t.limdist} is, in fact, almost sure.

\begin{corollary} Let $(B^{1,N}_{r,s}(t))_{t\ge 0},\ldots,(B^{n,N}_{r,s}(t))_{t\ge 0}$ be independent Brownian motions on $\GL_N$.  Then for any $t_1,\ldots,t_n\ge 0$ and any $f\in\C\langle X_1,\ldots,X_n,X_1^\ast,\ldots,X_n^\ast\rangle$, the random variable $\tr(f(B^{1,N}_{r,s}(t_n),\ldots,B^{n,N}_{r,s}(t_n)^\ast)$ converges to its mean almost surely. \end{corollary}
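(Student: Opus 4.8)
The plan is to upgrade the $N\to\infty$ convergence of $\E F_N$ furnished by Theorem \ref{t.limdist} to almost sure convergence of $F_N$ itself, using the quantitative fluctuation bound of Theorem \ref{t.multi-cov} together with a Borel--Cantelli argument. Abbreviate $F_N = \tr(f(B^{1,N}_{r,s}(t_1),\ldots,B^{n,N}_{r,s}(t_n)^\ast))$, and recall that all the independent Brownian motions $B^{j,N}_{r,s}(\cdot)$, for every $j$ and every $N$, are realized on the single probability space $(\Omega,\mathscr F,\P)$ fixed in Section \ref{Section Introduction}. From the definition $\Cov(F,G) = \E(F\overline G) - \E(F)\E(\overline G)$ we have $\Cov(F_N,F_N) = \E|F_N|^2 - |\E F_N|^2 = \E|F_N - \E F_N|^2$, so Theorem \ref{t.multi-cov} applied with $g=f$ produces a constant $C = C(r,s,\mx t,f)$ with
\[ \E\big|F_N - \E F_N\big|^2 \le \frac{C}{N^2}, \qquad N\in\N. \]

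First I would apply Chebyshev's inequality: for any fixed $\varepsilon>0$ and every $N$,
\[ \P\big(|F_N - \E F_N| > \varepsilon\big) \le \frac{1}{\varepsilon^2}\,\E|F_N - \E F_N|^2 \le \frac{C}{\varepsilon^2 N^2}. \]
Since $\sum_{N\ge1} N^{-2}<\infty$, the first Borel--Cantelli lemma yields a full-measure event $\Omega_\varepsilon$ on which $|F_N - \E F_N| \le \varepsilon$ for all but finitely many $N$. Intersecting over the countable family $\varepsilon = 1/k$, $k\in\N$, produces the full-measure event $\Omega_0 = \bigcap_{k\ge1}\Omega_{1/k}$ on which $F_N - \E F_N \to 0$ as $N\to\infty$; that is, $F_N$ converges to its mean almost surely. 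Combined with Theorem \ref{t.limdist}, the mean $\E F_N$ in turn converges to the deterministic number $\big(e^{\D^\mx{t}_{r,s}}\Upsilon(f)\big)(\mx 1)$, so in fact $F_N$ converges almost surely to this limit.

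There is essentially no obstacle here: once the $O(1/N^2)$ variance bound of Theorem \ref{t.multi-cov} is available, the corollary is routine, the only bookkeeping point being the passage from an ``$\varepsilon$-by-$\varepsilon$, eventually'' statement to genuine convergence, which is handled by the countable intersection above. Equivalently, one can bypass Chebyshev and Borel--Cantelli altogether and simply note that $\sum_N \E|F_N - \E F_N|^2 \le \sum_N C/N^2 < \infty$ already forces $\sum_N |F_N - \E F_N|^2 < \infty$ almost surely, hence $F_N - \E F_N \to 0$ almost surely.
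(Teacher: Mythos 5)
Your proof is correct and follows exactly the argument the paper indicates: apply Theorem \ref{t.multi-cov} with $g=f$ to get the $O(1/N^2)$ variance bound, then Chebyshev and Borel--Cantelli (intersecting over $\varepsilon=1/k$) to conclude almost sure convergence. The closing observation that one can skip Chebyshev and note $\sum_N \E|F_N - \E F_N|^2 < \infty$ directly is a harmless simplification of the same idea.
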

\noindent This follows immediately from the $O(1/N^2)$ covariance estimate of Theorem \ref{t.multi-cov}, together with Chebyshev's inequality and the Borel-Cantelli lemma.

Finally, we note that we have proven asymptotic freeness of independent $(r,s)$-Brownian motions.

\begin{corollary} \label{c.asympfree2} Let $t_1,\ldots,t_n\ge 0$ and let $B^{1,N}_{r,s}(t_1),\ldots,B^{n,N}_{r,s}(t_n)$ be independent random matrices sampled from $(r,s)$-Brownian motion.  Then these random matrices are asymptotically free. \end{corollary}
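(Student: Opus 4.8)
The plan is to deduce this from Theorem~\ref{t.asympfree}, applied to the independent random matrices $A^N_j := B^{j,N}_{r,s}(t_j)$, $1\le j\le n$; thus it suffices to verify the three hypotheses of that theorem for this family (first noting that each $B^{j,N}_{r,s}(t_j)$ lies in $\M_N\tensor L^{\infty-}$, which follows from the matrix SDE~(\ref{e.mSDE-B}) together with standard moment estimates, or from the integrability built into the trace-polynomial formalism of Section~\ref{section Trace Polynomials}).

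For hypothesis~(1), I would invoke Lemma~\ref{l.prodmu}: the joint law of $(A^N_1,\ldots,A^N_n)$ is the product measure $\mu^{n,N}_{r,s;\mx{t}} = \mu^{1,N}_{r,s;t_1}\tensor\cdots\tensor\mu^{1,N}_{r,s;t_n}$. Since $\langle\cdot,\cdot\rangle^N_{r,s}$ is $\mathrm{Ad}_{\U_N}$-invariant, each factor $\mu^{1,N}_{r,s;t_j}$ — the heat kernel of $\Delta^N_{r,s}$ — is invariant under the conjugation $A\mapsto UAU^\ast$, $U\in\U_N$ (Remark~\ref{r.UNinvB}). Applying the same $U$ in every coordinate, the product measure is invariant under the simultaneous conjugation $(A_1,\ldots,A_n)\mapsto(UA_1U^\ast,\ldots,UA_nU^\ast)$, which is precisely hypothesis~(1). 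Hypothesis~(2) — the existence of $\lim_{N\to\infty}\E\tr f(A^N_1,\ldots,(A^N_n)^\ast)$ for every noncommutative polynomial $f$ — is exactly Theorem~\ref{t.limdist}. Hypothesis~(3) — the $O(1/N^2)$ bound on $\Cov[\tr f,\tr g]$ — is exactly Theorem~\ref{t.multi-cov} (equivalently, Theorem~\ref{t.cov0} composed with the inclusion $\Upsilon$). With all three hypotheses in hand, Theorem~\ref{t.asympfree} yields the asymptotic freeness of $A^N_1,\ldots,A^N_n$.

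In effect there is essentially nothing left to prove here: all of the real work lies upstream, in Theorem~\ref{t.multi-cov}, whose proof rests on the intertwining formula (Theorem~\ref{t.intertwine1}) and the finite-dimensional perturbation estimate (Lemma~\ref{l.findim}). The only point that requires any care is the first one — that the $\U_N$-conjugation in hypothesis~(1) acts by a \emph{single} unitary on all $n$ coordinates simultaneously — and this is exactly what the product structure of $\mu^{n,N}_{r,s;\mx{t}}$ from Lemma~\ref{l.prodmu}, combined with the coordinatewise conjugation-invariance from Remark~\ref{r.UNinvB}, delivers. Thus the main (and essentially only) obstacle was already overcome in Section~\ref{section Heat Kernels}, and the corollary is just the assembly of those results into the Mingo--Speicher criterion.
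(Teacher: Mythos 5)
Your proof is correct and follows essentially the same route as the paper: invoke Theorem~\ref{t.asympfree} with hypothesis~(1) supplied by $\U_N$-invariance (Remark~\ref{r.UNinvB}), hypothesis~(2) by Theorem~\ref{t.limdist}, and hypothesis~(3) by Theorem~\ref{t.multi-cov}. The only difference is that you spell out why the \emph{joint} law is invariant under simultaneous conjugation (via the product structure from Lemma~\ref{l.prodmu}), a point the paper's two-sentence proof leaves implicit.
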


\begin{proof} As pointed out in Remark \ref{r.UNinvB}, the distribution of each $B^{j,N}_{r,s}(t_j)$ is invariant under $\U_N$-conjugation.  Theorems \ref{t.multi-cov} and \ref{t.limdist} then confirm all of the conditions of Theorem \ref{t.asympfree}, which demonstrates the asymptotic freeness as claimed.  \end{proof}

\section{Moment Calculations \label{section Moment Calculations}}

This section is devoted to the proof of Theorem \ref{t.moments}.  We begin by reiterating the following differential characterization of the constants $\nu_n(t)$ from (\ref{e.nuNu0}).

\begin{lemma} \label{l.identnu} Let $\{\nu_n\colon n\ge 0\}$ be the functions in (\ref{e.nuNu0}), and let $\varrho_n(t) = e^{\frac{n}{2}t}\nu_n(t)$.  The functions $\varrho_n$ are uniquely determined by the initial conditions $\varrho_n(0)=\nu_n(0) = 1$ for all $n$, $\varrho_1(t)\equiv 1$, and the following system of coupled linear ODEs for $n\ge 2$:
\[ \varrho_n'(t) = -\sum_{k=1}^{n-1} k \varrho_k(t)\varrho_{n-k}(t). \]
\end{lemma}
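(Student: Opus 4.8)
The plan is to repackage the family $\{\varrho_n\}_{n\ge 1}$ into a single generating function, recognize Biane's implicit equation for the moment generating function of free unitary Brownian motion, and read the ODEs off a first-order PDE. I would begin with the trivial parts: setting $t=0$ in (\ref{e.nuNu0}) kills every term except $k=0$, so $\varrho_n(0)=\nu_n(0)=\frac1n\binom{n}{1}=1$; and for $n=1$ the sum in (\ref{e.nuNu0}) has only the $k=0$ term for every $t$, whence $\varrho_1(t)=e^{t/2}\nu_1(t)\equiv 1$. Next, the proposed system is \emph{triangular}: the right side of the equation for $\varrho_n'$ involves only the products $\varrho_k\varrho_{n-k}$ with $1\le k\le n-1$, hence only $\varrho_1,\dots,\varrho_{n-1}$. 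So, given $\varrho_1\equiv 1$ and the initial conditions, each $\varrho_n$ is determined by a single integration; uniqueness is then automatic, and it remains only to verify that the explicit functions $\varrho_n(t)=e^{nt/2}\nu_n(t)$ satisfy the ODEs.

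For that I would introduce the generating function $F(t,z)=\sum_{n\ge 1}\varrho_n(t)\,z^n$, whose $z^n$-coefficient is a polynomial in $t$ of degree $n-1$. The crucial step is to identify $F$ via Lagrange inversion applied to $w\mapsto\frac{w}{1+w}e^{tw}$: the unique power series $G$ with $G(t,0)=0$ solving $z=\frac{G}{1+G}e^{tG}$ has
\[ [z^n]\,G \;=\; \frac1n\,[w^{n-1}]\big((1+w)e^{-tw}\big)^n \;=\; \frac1n\sum_{k=0}^{n-1}\binom{n}{k+1}\frac{(-tn)^k}{k!} \;=\; \sum_{k=0}^{n-1}\binom{n}{k+1}\frac{(-t)^k}{k!}\,n^{k-1}, \]
which by (\ref{e.nuNu0}) is exactly $\varrho_n(t)$; hence $F=G$. (This is in essence how (\ref{e.nuNu0}) itself is obtained in \cite{Biane1997c}, so one may alternatively just cite that the moment generating function of free unitary Brownian motion solves the displayed implicit equation and skip the inversion.)

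Next I would differentiate the relation $z=\frac{F}{1+F}e^{tF}$ implicitly, once with respect to $t$ at fixed $z$ and once with respect to $z$ at fixed $t$, and combine the two (dividing out the derivative of $w\mapsto\frac{w}{1+w}e^{tw}$ at $w=F$, which is nonzero near $F=0$) to obtain the quasilinear PDE $\partial_t F = -zF\,\partial_z F$. Equivalently, this is the statement that $F$ is constant along the characteristics $z(t)=z_0 e^{F_0 t}$, $F_0=z_0/(1-z_0)$, starting from $F(0,z)=z/(1-z)$. Finally I would extract the coefficient of $z^n$ from $\partial_t F+zF\,\partial_z F=0$: the first term contributes $\varrho_n'(t)$, while $zF\,\partial_z F=\big(\sum_{k\ge1}k\varrho_k z^k\big)\big(\sum_{m\ge1}\varrho_m z^m\big)$ contributes $\sum_{k=1}^{n-1}k\,\varrho_k(t)\varrho_{n-k}(t)$ — which is empty for $n=1$ (consistent with $\varrho_1'\equiv 0$) and gives precisely $\varrho_n'(t)=-\sum_{k=1}^{n-1}k\,\varrho_k(t)\varrho_{n-k}(t)$ for $n\ge 2$, as claimed.

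The one genuinely technical point is the Lagrange-inversion identification of $F$ in the second paragraph; everything around it is routine bookkeeping. If one wished to avoid generating functions entirely, the ODE system is equivalent to the binomial identity $n^m\binom{n}{m+2}=\sum_{j=1}^{n-1}\sum_{p+q=m}\binom{m}{p}\,j^p(n-j)^{q-1}\binom{j}{p+1}\binom{n-j}{q+1}$, which can be proved by a labeled-forest double count, but this route is longer and less transparent.
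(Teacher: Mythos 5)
Your proof is correct. The easy pieces---the initial conditions, $\varrho_1\equiv 1$, and uniqueness via the triangular structure of the system (each $\varrho_n'$ depends only on $\varrho_1,\dots,\varrho_{n-1}$, so is determined by a single integration)---are handled cleanly, and the substantive claim, that the explicit $\varrho_n$ of~(\ref{e.nuNu0}) actually solve the ODEs, is what the Lagrange-inversion step delivers: it identifies $F(t,z)=\sum_{n\ge 1}\varrho_n(t)z^n$ with the power-series solution of $z=\frac{F}{1+F}e^{tF}$, from which implicit differentiation gives $\partial_t F + zF\,\partial_z F=0$, whose $z^n$ coefficient is the claimed recursion. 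Note the paper itself supplies no proof of this lemma---it cites \cite{Biane1997c} and \cite[Lemma 5.4]{DHK2013}---so your argument fills a gap rather than deviating from the author's. The functional equation $z=\frac{F}{1+F}e^{tF}$ you invert is Biane's $\Sigma$-transform identity for free unitary Brownian motion, i.e.\ the very source of formula~(\ref{e.nuNu0}), so this is the natural and likely the intended route; you have unwound the standard derivation rather than found a shortcut. The one tacit hypothesis, that $\partial_w\bigl[\tfrac{w}{1+w}e^{tw}\bigr]\ne 0$ at $w=0$, you flag explicitly, and your coefficient extraction from the PDE is correct.
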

\noindent Indeed, in \cite{Biane1997c}, this connection was the key step in identifying the distribution of a free unitary Brownian motion as the limit distribution (at each fixed time $t$) of a Brownian motion $U^N_t$ on $\U_N$.  It is also independently proved in \cite[Lemma 5.4, Eq.\ (5.23)]{DHK2013}.

\begin{lemma} \label{l.da} Let $b_{r,s}(t)$ be defined by (\ref{e.fSDE0}); for short, let $b=b_{r,s}(t)$.  Set $a= a_{r,s}(t) = e^{\frac12(r-s)t}b$.  Then
\begin{equation} \label{e.da} da = a\,dw,
\end{equation}
where $w=w_{r,s}(t)$ of (\ref{e.w}).
\end{lemma}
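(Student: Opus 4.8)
The plan is to apply the free It\^o product rule to the factorization $a(t) = c(t)\,b(t)$, where $c(t) = e^{\frac12(r-s)t}$ is a deterministic scalar function of $t$ and $b = b_{r,s}(t)$. First I would note that $c$ is smooth and satisfies the (degenerate) fSDE $dc = \frac12(r-s)c\,dt$, with vanishing $dx$- and $dy$-components, so it is a free It\^o process in the sense of \eqref{e.fSDE}; since $c(t)$ is scalar it commutes with every operator, so no ordering issues arise. By Lemma \ref{l.fIto}, equation \eqref{e.fItoprod},
\[ da = d(cb) = (dc)\,b + c\,(db) + (dc)(db). \]

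Next I would evaluate the three terms. The first is $(dc)\,b = \frac12(r-s)c\,b\,dt = \frac12(r-s)a\,dt$. For the second, substitute the defining fSDE \eqref{e.fSDE0} for $b = b_{r,s}(t)$:
\[ c\,(db) = c\Big(b\,dw - \tfrac12(r-s)b\,dt\Big) = a\,dw - \tfrac12(r-s)a\,dt. \]
The cross term $(dc)(db)$ is the product of $\frac12(r-s)c\,dt$ with $b\,dw - \frac12(r-s)b\,dt$; each resulting monomial carries a factor $dt\,dw$ or $dt\,dt$, and all of these vanish (by \eqref{e.fIto4} and $dt\,dt=0$), so $(dc)(db)=0$. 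Adding the contributions, the two $\frac12(r-s)a\,dt$ terms cancel and we obtain $da = a\,dw$. The initial condition is immediate: $a(0) = c(0)b(0) = 1$.

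\textbf{Main obstacle.} There is no substantive difficulty here; the lemma is precisely the free analogue of the classical fact that the integrating factor $e^{\frac12(r-s)t}$ removes the drift from $db = b\,dw - \frac12(r-s)b\,dt$. The only point warranting a line of care is the justification that the product rule \eqref{e.fItoprod} applies when one factor is a deterministic scalar process and that the cross-variation term genuinely vanishes. If one prefers to avoid invoking the product rule for such a degenerate factor, one can instead work directly with the integral form $b(t) = 1 + \int_0^t b(u)\,dw(u) - \frac12(r-s)\int_0^t b(u)\,du$ and integrate by parts against the smooth scalar $c$, which produces exactly the same cancellation of the drift.
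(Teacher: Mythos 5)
Your proof is correct and takes essentially the same approach as the paper: apply the free It\^o product rule \eqref{e.fItoprod} to $a = e^{\frac12(r-s)t}b$, note the cross-variation term vanishes, and observe that the drift terms cancel. The paper's proof is identical in structure and detail.
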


\begin{proof} Since $t\mapsto e^{\frac12(r-s)t}$ is a free It\^o process with $de^{\frac12(r-s)t} = \frac12(r-s)e^{\frac12(r-s)t}\,dt$, (\ref{e.fItoprod}) shows that
\[ da = de^{\frac12(r-s)t}\cdot b + e^{\frac12(r-s)t}\cdot db + de^{\frac12(r-s)t}\cdot db. \]
The last term is $0$, while the first two simplify to
\[ da = \frac12(r-s)e^{\frac12(r-s)t}b\,dt + e^{\frac12(r-s)t}(b\,dw - \frac12(r-s)b\,dt) = a\,dw, \]
by (\ref{e.fSDE0}).
\end{proof}

We also record the following It\^o formula for $dw_{r,s}(t)$ products.
\begin{lemma} \label{l.dwdw*} Let $t\ge 0$ and let $\ex,\ex'\in\{1,\ast\}$.  For any adapted process $\theta=\theta(t)$,
\begin{equation} \label{e.dw.dw} dw^{\ex}\,\theta\,dw^{\ex'} = (s\pm r)\t(\theta)\,dt, \end{equation}
where the sign is $-$ if $\ex=\ex'$ and $+$ if $\ex\ne\ex'$.
\end{lemma}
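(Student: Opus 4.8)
The plan is to expand $w_{r,s}(t)$ into its two semicircular components and apply the free It\^o multiplication rules of Lemma \ref{l.fIto} term by term. Recall from (\ref{e.w}) that $w = w_{r,s}(t) = i\sqrt{r}\,x(t) + \sqrt{s}\,y(t)$; since $x(t)$ and $y(t)$ are self-adjoint, $w^\ast = -i\sqrt{r}\,x(t) + \sqrt{s}\,y(t)$. Hence, for either choice of $\ex\in\{1,\ast\}$, the differential $dw^\ex$ is an $\R$-linear combination of $dx$ and $dy$ whose $y$-coefficient is $\sqrt{s}$ and whose $x$-coefficient is $i\sqrt{r}$ if $\ex = 1$ and $-i\sqrt{r}$ if $\ex = \ast$.

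First I would substitute these expansions into $dw^\ex\,\theta\,dw^{\ex'}$ and multiply out, obtaining four terms, one each of the shapes $dx\,\theta\,dx$, $dx\,\theta\,dy$, $dy\,\theta\,dx$, $dy\,\theta\,dy$, each with a scalar coefficient; here the adaptedness of $\theta$ is exactly what permits us to invoke Lemma \ref{l.fIto}. By (\ref{e.fIto3}) the two mixed terms $dx\,\theta\,dy$ and $dy\,\theta\,dx$ vanish, and by (\ref{e.fIto2}) each of the remaining two contributes its scalar coefficient times $\t(\theta)\,dt$. Then I would simply read off the coefficients: the $dy\,\theta\,dy$ coefficient is $\sqrt{s}\cdot\sqrt{s} = s$ in all cases, while the $dx\,\theta\,dx$ coefficient is the product of the two $x$-coefficients, which equals $(i\sqrt{r})^2 = -r$ when $\ex = \ex'$ and $(i\sqrt{r})(-i\sqrt{r}) = r$ when $\ex \ne \ex'$. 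Adding, $dw^\ex\,\theta\,dw^{\ex'} = (s-r)\t(\theta)\,dt$ when $\ex = \ex'$ and $(s+r)\t(\theta)\,dt$ when $\ex \ne \ex'$, which is precisely (\ref{e.dw.dw}).

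There is no genuine obstacle in this argument; it is a direct consequence of Lemma \ref{l.fIto}. The only point needing a little care is the bookkeeping of the factors of $i$, which is exactly what causes the sign in (\ref{e.dw.dw}) to depend on whether $\ex$ and $\ex'$ agree --- a manifestation of the fact that in (\ref{e.w}) the semicircular motion $x$ enters multiplied by $i$ whereas $y$ does not.
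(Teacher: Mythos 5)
Your proof is correct and fills in exactly the computation the paper leaves as ``immediate'': expanding $dw^\ex$ into its $dx$ and $dy$ components, killing the cross terms via (\ref{e.fIto3}), and applying (\ref{e.fIto2}) to the remaining two, with the sign governed by whether the two factors of $\pm i\sqrt{r}$ agree. This is the same route the paper takes; there is nothing to add.
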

\noindent Lemma \ref{l.dwdw*} is an immediate computation from (\ref{e.fIto2}) -- (\ref{e.fIto4}).

\subsection{The Moments of $b_{r,s}(t)$}

We use (\ref{e.da}) to give a recursive formula for the powers of $a_{r,s}(t)$.

\begin{proposition} \label{p.a} For $n\in\N^\ast$,
\begin{equation} \label{e.a1} d(a^n) = \sum_{k=1}^n a^k\,dw\, a^{n-k} + (s-r)\1_{n\ge 2}\sum_{k=1}^{n-1} ka^k\t(a^{n-k})\,dt. \end{equation}
\end{proposition}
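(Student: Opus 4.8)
The plan is to prove \eqref{e.a1} by induction on $n$ using the free It\^o product rule \eqref{e.fItoprod} together with the elementary stochastic differential \eqref{e.da} from Lemma \ref{l.da} and the $dw$-multiplication rule \eqref{e.dw.dw} from Lemma \ref{l.dwdw*}. The base case $n=1$ is exactly \eqref{e.da}: $d(a^1) = a\,dw$, and the second sum is empty because of the indicator $\1_{n\ge 2}$, so there is nothing to check.

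For the inductive step, I would assume \eqref{e.a1} holds for some $n\ge 1$ and compute $d(a^{n+1}) = d(a^n\cdot a)$ via the product rule:
\[ d(a^{n+1}) = d(a^n)\cdot a + a^n\cdot da + d(a^n)\cdot da. \]
The first term, using the inductive hypothesis, contributes $\sum_{k=1}^n a^k\,dw\,a^{n-k}a = \sum_{k=1}^n a^k\,dw\,a^{n+1-k}$ together with the $dt$-drift $(s-r)\1_{n\ge 2}\sum_{k=1}^{n-1} ka^k\t(a^{n-k})\,a\,dt$; note that the $dt$ part of $d(a^n)$ times $a$ stays a $dt$ term, and since $\t(a^{n-k})$ is a scalar this equals $(s-r)\1_{n\ge 2}\sum_{k=1}^{n-1} ka^k\t(a^{n-k})a^{?}$—here I must be careful with operator ordering, writing it as $a^{k+1}\t(a^{n-k})$ is wrong; rather $a^k\t(a^{n-k})a = a^{k+1}\t(a^{n-k})$ only after moving the scalar, so the clean form is $(s-r)\1_{n\ge2}\sum_{k=1}^{n-1}ka^{k}\,a\,\t(a^{n-k})\,dt = (s-r)\1_{n\ge2}\sum_{k=1}^{n-1}ka^{k+1}\t(a^{n-k})\,dt$. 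The second term is simply $a^n\,da = a^n\,a\,dw = a^{n+1}\,dw$, i.e.\ the $k=n+1$ term of the martingale sum for $a^{n+1}$.

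The crux is the cross-variation term $d(a^n)\cdot da$. Here only the martingale parts of each factor survive (a $dt$ times anything with a $dw$ or $dt$ vanishes by \eqref{e.fIto4} and the usual $dt\cdot dt = 0$), so
\[ d(a^n)\cdot da = \Bigl(\sum_{k=1}^n a^k\,dw\,a^{n-k}\Bigr)(a\,dw) = \sum_{k=1}^n a^k\,dw\,a^{n-k+1}\,dw. \]
Applying \eqref{e.dw.dw} with $\ex=\ex'=1$ (so the sign is $-$) to each summand, $dw\,a^{n-k+1}\,dw = (s-r)\t(a^{n-k+1})\,dt$, hence this term equals $(s-r)\sum_{k=1}^n a^k\,\t(a^{n+1-k})\,dt = (s-r)\sum_{k=1}^n k' \cdots$—wait, the coefficient here is $1$, not $k$, for each $k$ from $1$ to $n$. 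I then need to add this to the inherited drift $(s-r)\1_{n\ge2}\sum_{k=1}^{n-1}ka^{k+1}\t(a^{n-k})\,dt$; reindexing the latter sum by $j=k+1$ gives $(s-r)\sum_{j=2}^{n}(j-1)a^{j}\t(a^{n+1-j})\,dt$, and adding the former (reindexed with the same $j=k$, running $1$ to $n$, coefficient $1$) yields $(s-r)\sum_{j=1}^n j\,a^j\,\t(a^{n+1-j})\,dt$, which is exactly the $n+1$ drift term $(s-r)\1_{n+1\ge2}\sum_{k=1}^{n}k a^k\t(a^{n+1-k})\,dt$. Combining with the two martingale contributions completes the induction.

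I expect the only real obstacle to be bookkeeping: keeping track of operator ordering when scalars $\t(\cdot)$ are pulled out, and correctly combining the reindexed drift sum from the inductive hypothesis with the new contribution from the quadratic covariation so that the coefficients telescope into $k$ rather than $1$. Everything else is a direct application of the stated It\^o rules.
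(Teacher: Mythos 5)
Your proof is correct and follows essentially the same induction as the paper: base case from \eqref{e.da}, product rule \eqref{e.fItoprod} on $a^{n+1}=a^n\cdot a$, and \eqref{e.dw.dw} to resolve the quadratic covariation term, with the drift coefficients telescoping to $k$ after reindexing. The only (immaterial) cosmetic difference is that the paper multiplies the new factor on the left, writing $d(a\cdot a^n)$ rather than $d(a^n\cdot a)$.
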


\begin{proof} When $n=1$, (\ref{e.a1}) reduces to (\ref{e.da}).  We proceed by induction, supposing that (\ref{e.a1}) has been verified up to level $n$.  Then, using the It\^o product rule (\ref{e.fItoprod}), together with (\ref{e.da}) and (\ref{e.a1}), gives
\begin{align*} d(a^{n+1}) = d(a\cdot a^n) &= da\cdot a^n + a\cdot d(a^n) + da\cdot d(a^n) \\
&= a\,dw\,a^n + \sum_{k=1}^n a^{k+1}\,dw\,a^{n-k} + (s-r)\sum_{k=1}^{n-1} ka^{k+1}\t(a^{n-k})\,dt + \sum_{k=1}^n a\,dw\,a^k\,dw\,a^{n-k}.
\end{align*}
The first two terms combine, reindexing $\ell=k+1$, to give $\sum_{\ell=1}^{n+1} a^\ell\,dw\,a^{n+1-\ell}$. From (\ref{e.dw.dw}), the last terms are
\[ (s-r)\sum_{k=1}^n \t(a^k)a^{n+1-k}\,dt \]
which, when combined with the penultimate terms, yields (\ref{e.a1}) at level $n+1$.  This concludes the inductive proof.
\end{proof}

\begin{corollary} \label{c.moments1} The moments of $a=a_{r,s}(t)$ are $\t(a^n) = \varrho_n((r-s)t)$; consequently, the moments of $b=b_{r,s}(t)$ are $\t(b^n) = \nu_n((r-s)t)$, verifying (\ref{e.m1}).
\end{corollary}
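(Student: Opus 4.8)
The plan is to apply the trace $\t$ to the recursion in Proposition~\ref{p.a} and thereby reduce the computation of the moments of $a=a_{r,s}(t)$ to the triangular ODE system of Lemma~\ref{l.identnu}.

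First I would set $m_n(t) = \t(a_{r,s}(t)^n)$ and take $\t$ of both sides of \eqref{e.a1}. Because $w_{r,s}$ is a linear combination of $x$ and $y$, the free It\^o integral terms $\sum_{k=1}^n a^k\,dw\,a^{n-k}$ contribute nothing to the trace: by \eqref{e.fIto1} and linearity, $\t(\theta_1\,dw\,\theta_2)=0$ for all adapted $\theta_1,\theta_2$, so $\int_0^t\sum_k a^k\,dw\,a^{n-k}$ has vanishing trace. What remains is the drift, and applying $\t$ to the term $(s-r)\1_{n\ge 2}\sum_{k=1}^{n-1}ka^k\t(a^{n-k})\,dt$ gives $(s-r)\1_{n\ge 2}\sum_{k=1}^{n-1}k\,m_k(t)m_{n-k}(t)\,dt$. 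Since $a_{r,s}(0)=b_{r,s}(0)=1$ we have $m_n(0)=\t(1)=1$, and from $da=a\,dw$ (Lemma~\ref{l.da}) we get $m_1(t)\equiv 1$; for $n\ge 2$ we obtain the closed system
\[ m_n'(t) = (s-r)\sum_{k=1}^{n-1} k\, m_k(t)\, m_{n-k}(t). \]

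Next I would compare with the rescaled functions $t\mapsto \varrho_n((r-s)t)$. A one-line chain-rule computation using Lemma~\ref{l.identnu} shows these satisfy $\varrho_1((r-s)t)\equiv 1$, $\varrho_n(0)=1$, and, for $n\ge 2$, $\tfrac{d}{dt}\varrho_n((r-s)t) = (r-s)\varrho_n'((r-s)t) = (s-r)\sum_{k=1}^{n-1}k\,\varrho_k((r-s)t)\varrho_{n-k}((r-s)t)$, i.e.\ exactly the same system with the same initial data. Since the system is lower-triangular --- the equation for the $n$th unknown involves only the unknowns of smaller index --- it is solved by recursive integration and hence has a unique solution; therefore $m_n(t)=\varrho_n((r-s)t)$, which is the first claim of the corollary. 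Finally, $b_{r,s}(t)=e^{-\frac12(r-s)t}a_{r,s}(t)$ gives $\t(b^n)=e^{-\frac{n}{2}(r-s)t}\t(a^n)=e^{-\frac{n}{2}(r-s)t}\varrho_n((r-s)t)=\nu_n((r-s)t)$ by the definition $\varrho_n(v)=e^{\frac{n}{2}v}\nu_n(v)$. The companion identity $\t(b^{\ast n})=\nu_n((r-s)t)$ in \eqref{e.m1} follows by taking adjoints, $\t(b^{\ast n})=\overline{\t(b^n)}$, together with the fact that $\nu_n((r-s)t)$ is real (the sum in \eqref{e.nuNu0} has real terms).

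This argument is essentially routine once Proposition~\ref{p.a} and Lemma~\ref{l.identnu} are available; the only point needing a modicum of care is the passage from the fSDE to a genuine ODE for the scalar functions $m_n$ --- namely that the trace of the free It\^o integral term vanishes and that $t\mapsto m_n(t)$ is differentiable with the asserted derivative. Both are immediate from the free stochastic calculus recalled in Lemma~\ref{l.fIto} (in particular \eqref{e.fIto1}) and continuity of the adapted integrand $\sum_k k\,a^k\,\t(a^{n-k})$, so I do not anticipate a genuine obstacle here.
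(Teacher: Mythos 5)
Your argument is correct and follows essentially the same path as the paper: take the trace of the recursion in Proposition~\ref{p.a}, use \eqref{e.fIto1} to kill the stochastic-integral term, match the resulting triangular ODE system to Lemma~\ref{l.identnu}, and undo the rescaling $a=e^{\frac12(r-s)t}b$. Two small cosmetic differences from the paper's write-up, both in your favor: by pushing the chain rule onto $\varrho_n((r-s)t)$ rather than time-changing the moments, you avoid the separate $s=r$ case that the paper handles explicitly, and you also spell out why $\t(b^{\ast n})=\nu_n((r-s)t)$ (via $\t(b^{\ast n})=\overline{\t(b^n)}$ and reality of $\nu_n$), a point the paper's proof leaves implicit.
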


\begin{proof} Since $a(0)=b(0)=1$, $\t(a(0)^n) = 1 = \varrho_n(0)$.  Taking the trace of (\ref{e.a1}) and using (\ref{e.fIto1}), we have
\begin{equation} \label{e.dtau(a)} d\t(a^n) = (s-r)\1_{n\ge 2}\sum_{k=1}^{n-1} k\t(a^k)\t(a^{n-k})\,dt. \end{equation}
Thus $\frac{d}{dt}\t(a) = 0 = \varrho_1'((r-s)t)$.  If $s=r$, (\ref{e.dtau(a)}) asserts that $\tau(a^n)=\tau(a(0)^n) = 1 = \varrho_n(0\cdot t)$ for all $n$.  On the other hand, if $s\ne r$, let $\tilde\varrho_n(t) = \t(a_{r,s}(t/(r-s))^n)$; then the chain rule applied to (\ref{e.dtau(a)}) shows that
\[ \tilde\varrho_n'(t) = -\1_{n\ge 2}\sum_{k=1}^{n-1} k\tilde{\varrho}_k(t)\tilde{\varrho}_{n-k}(t). \]
By Lemma \ref{l.identnu}, it follows that $\tilde{\varrho}_n(t) = \varrho_n(t)$ for all $n,t\ge 0$.  Hence, $\t(a_{r,s}(t)^n) = \varrho_n((r-s)t) = e^{\frac{n}{2}(r-s)t}\nu_n((r-s)t)$, as claimed.  As defined in Lemma \ref{l.da}, we therefore have
\[ \t(b^n) = \t[(e^{-\frac12(r-s)t}a)^n] = e^{-\frac{n}{2}(r-s)t}\varrho_n((r-s)t) = \nu_n((r-s)t), \]
verifying (\ref{e.m1}), and concluding the proof.
\end{proof}

\subsection{The Moments of $b_{r,s}(t)b_{r,s}(t)^\ast$}

\begin{lemma} \label{l.c0} Let $c_{r,s}(t) = e^{-st}b_{r,s}(t)$; for short, let $c=c_{r,s}(t)$.  Then
\begin{equation} \label{e.dcc*} d(cc^\ast) = 2\sqrt{s}\,c\,dy\,c^\ast, \end{equation}
where $y=y(t)$.
\end{lemma}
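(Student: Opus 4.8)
The plan is to proceed exactly as in the proof of Lemma \ref{l.da}, namely to treat $c = c_{r,s}(t) = e^{-st}b$ as a product of the scalar free It\^o process $t\mapsto e^{-st}$ with $b = b_{r,s}(t)$, compute $dc$ via the free It\^o product rule \eqref{e.fItoprod}, and then compute $d(cc^\ast)$ by a second application of \eqref{e.fItoprod}, using the quadratic-variation formula \eqref{e.dw.dw} from Lemma \ref{l.dwdw*} to evaluate the cross-term $dc\cdot dc^\ast$. The constant prefactor $e^{-st}$ is chosen precisely so that the $dt$-terms cancel, leaving only the martingale part.

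First I would write $dc$. Since $de^{-st} = -s e^{-st}\,dt$ and the cross term $de^{-st}\cdot db$ vanishes (it is a product of a $dt$ with an $L^2$-bounded increment), \eqref{e.fItoprod} and \eqref{e.fSDE0} give
\[ dc = -s e^{-st} b\,dt + e^{-st}\left(b\,dw_{r,s} - \tfrac12(r-s)b\,dt\right) = c\,dw_{r,s} - \left(s + \tfrac12(r-s)\right)c\,dt = c\,dw_{r,s} - \tfrac12(r+s)c\,dt. \]
Taking adjoints (and using that $dw_{r,s}^\ast = dw_{r,s}^\ast$, with $w_{r,s} = i\sqrt r\,x + \sqrt s\,y$ and $x,y$ self-adjoint, so $w_{r,s}^\ast = -i\sqrt r\,x + \sqrt s\,y$) yields
\[ dc^\ast = dw_{r,s}^\ast\,c^\ast - \tfrac12(r+s)c^\ast\,dt. \]

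Next I would apply \eqref{e.fItoprod} to the product $cc^\ast$:
\[ d(cc^\ast) = dc\cdot c^\ast + c\cdot dc^\ast + dc\cdot dc^\ast. \]
The first two terms contribute the martingale part $c\,dw_{r,s}\,c^\ast + c\,dw_{r,s}^\ast\,c^\ast$ together with the drift $-(r+s)cc^\ast\,dt$. For the quadratic-variation term, only the $dw\,\cdot\,dw^\ast$ piece of $dc\cdot dc^\ast$ survives (all terms involving a $dt$ factor drop by \eqref{e.fIto4}), and Lemma \ref{l.dwdw*} with $\ex=1$, $\ex'=\ast$ gives $dw_{r,s}\,c^\ast\,dw_{r,s}^\ast = (s+r)\t(c^\ast)\,dt$; but I need the matrix-sandwich form, so more carefully $dc\cdot dc^\ast = c\,dw_{r,s}\,c^\ast\,dw_{r,s}^\ast\,c^\ast$, and one applies the trace-producing rules \eqref{e.fIto2}--\eqref{e.fIto3} to the inner $dw_{r,s}\,(c^\ast)\,dw_{r,s}^\ast$: expanding $w_{r,s} = i\sqrt r\,x + \sqrt s\,y$, the $x$--$y$ cross terms vanish, $dx\,\theta\,dx = dy\,\theta\,dy = \t(\theta)\,dt$, and the signs combine so that $dw_{r,s}\,\theta\,dw_{r,s}^\ast = (r+s)\t(\theta)\,dt$ — i.e. the ``$\ex\ne\ex'$'' case of \eqref{e.dw.dw}. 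Hence $dc\cdot dc^\ast = (r+s)\t(c^\ast)\,c\,c^\ast\,dt = (r+s)cc^\ast\,dt$ is \emph{not} quite right dimensionally; rather the correct bookkeeping gives $dc\cdot dc^\ast = (r+s)\,\t(c^\ast)\,cc^\ast\,dt$? No: the sandwich is $c\cdot[dw_{r,s}\,c^\ast\,dw_{r,s}^\ast]\cdot c^\ast = c\cdot (r+s)\t(c^\ast)I\cdot c^\ast$, which would be scalar-valued in $\t(c^\ast)$ — so the honest computation requires tracking which factor lands inside the trace. I therefore expect the drift from $dc\cdot dc^\ast$ to combine with $-(r+s)cc^\ast\,dt$ and cancel exactly, and the two martingale terms $c\,(dw_{r,s}+dw_{r,s}^\ast)\,c^\ast = c\,(2\sqrt s\,dy)\,c^\ast$ to give the stated $2\sqrt s\,c\,dy\,c^\ast$.

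The main obstacle, then, is purely the careful bookkeeping of the quadratic-variation term $dc\cdot dc^\ast$: one must correctly place the surviving $c,c^\ast$ factors relative to the trace produced by $dw_{r,s}\,\cdot\,dw_{r,s}^\ast$, and verify that its drift contribution is exactly $+(r+s)\,cc^\ast\,dt$ so as to cancel the $-(r+s)\,cc^\ast\,dt$ coming from the two ordinary differential terms. Everything else is a routine application of Lemma \ref{l.fIto}, Lemma \ref{l.dwdw*}, and the product rule, in direct analogy with Lemma \ref{l.da}.
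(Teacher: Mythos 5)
Your strategy is sound and is essentially a reordered version of the paper's proof: the paper factors $cc^\ast = e^{-2st}bb^\ast$, computes $d(bb^\ast)$, and then combines, whereas you absorb the scalar factor into $c$ first by computing $dc = c\,dw_{r,s} - \tfrac12(r+s)c\,dt$, which is correct, and then apply the product rule to $cc^\ast$. Both routes lead to the same calculation and the same cancellation. Your $dc$, $dc^\ast$, and the martingale-plus-drift contributions from $dc\cdot c^\ast + c\cdot dc^\ast$ are all correct.

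The one concrete slip is in the quadratic variation term, where you wrote $dc\cdot dc^\ast = c\,dw_{r,s}\,c^\ast\,dw_{r,s}^\ast\,c^\ast$. There is no extra $c^\ast$ sandwiched between the two stochastic differentials: since the martingale part of $dc$ is $c\,dw_{r,s}$ and (taking adjoints) that of $dc^\ast$ is $dw_{r,s}^\ast\,c^\ast$, multiplying gives
\[
dc\cdot dc^\ast = (c\,dw_{r,s})(dw_{r,s}^\ast\,c^\ast) = c\,\bigl(dw_{r,s}\,dw_{r,s}^\ast\bigr)\,c^\ast,
\]
with nothing inserted between $dw_{r,s}$ and $dw_{r,s}^\ast$. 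Applying Lemma \ref{l.dwdw*} with $\theta=1$ (the case $\ex\neq\ex'$) gives $dw_{r,s}\,dw_{r,s}^\ast = (r+s)\,\t(1)\,dt = (r+s)\,dt$, hence $dc\cdot dc^\ast = (r+s)\,cc^\ast\,dt$ exactly, which cancels the $-(r+s)\,cc^\ast\,dt$ drift from the other two terms. The leftover martingale part is $c\,(dw_{r,s}+dw_{r,s}^\ast)\,c^\ast = 2\sqrt{s}\,c\,dy\,c^\ast$, as claimed. Once you remove the spurious $c^\ast$ your confusion about ``which factor lands inside the trace'' disappears --- there is no such factor --- and the proof goes through cleanly.
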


\begin{proof} First note that $cc^\ast = e^{-2st}bb^\ast$. As in Lemma \ref{l.da}, we have
\begin{equation} \label{e.dcc*0} d(cc^\ast) = -2s\,cc^\ast\,dt + e^{-2st}d(bb^\ast). \end{equation}
By the It\^o product rule (\ref{e.fItoprod}) and (\ref{e.fSDE0}),
\begin{align*} d(bb^\ast) &= db\cdot b^\ast + b\cdot db^\ast + db\cdot db^\ast \\
&= b\,dw\,b^\ast - \frac12(r-s)bb^\ast\,dt + b\,dw^\ast\,b^\ast -\frac12(r-s)bb^\ast\,dt + b\,dw\,dw^\ast\,b^\ast \\
&= b(dw+dw^\ast)b^\ast - (r-s)bb^\ast + (r+s)bb^\ast
\end{align*}
where the last equality follows from Lemma \ref{l.dwdw*}.  Note that $dw+dw^\ast = 2\sqrt{s}\,dy$, and so this simplifies to $d(bb^\ast) = 2\sqrt{s}b\,dy\,b^\ast + 2s\,bb^\ast\,dt$.  Combining this with (\ref{e.dcc*0}) yields the result.
\end{proof}

\begin{proposition} \label{p.cc*n} For $n\in\N^\ast$,
\begin{equation} \label{e.cc*n} d[(cc^\ast)^n] =   2\sqrt{s}\sum_{k=1}^n (cc^\ast)^{k-1}c\,dy\,c^\ast(cc^\ast)^{n-k} + 4s\1_{n\ge 2}\sum_{k=1}^{n-1} k(cc^\ast)^k\t[(cc^\ast)^{n-k}]\,dt. \end{equation}
\end{proposition}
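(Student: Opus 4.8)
The plan is to prove \eqref{e.cc*n} by induction on $n$, exactly in the style of Proposition \ref{p.a}, but now tracking the self-adjoint object $cc^\ast$ rather than $a^n$. The base case $n=1$ is precisely Lemma \ref{l.c0}. For the inductive step, I would write $(cc^\ast)^{n+1} = (cc^\ast)\cdot(cc^\ast)^n$ and apply the free It\^o product rule \eqref{e.fItoprod}:
\[ d[(cc^\ast)^{n+1}] = d(cc^\ast)\cdot(cc^\ast)^n + (cc^\ast)\cdot d[(cc^\ast)^n] + d(cc^\ast)\cdot d[(cc^\ast)^n]. \]
The first term is $2\sqrt s\,c\,dy\,c^\ast(cc^\ast)^n$ by Lemma \ref{l.c0}; the second term, by the inductive hypothesis, contributes $2\sqrt s\sum_{k=1}^n (cc^\ast)^k c\,dy\,c^\ast(cc^\ast)^{n-k}$ together with the $dt$-term $4s\1_{n\ge2}\sum_{k=1}^{n-1} k(cc^\ast)^{k+1}\t[(cc^\ast)^{n-k}]\,dt$. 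Reindexing $\ell=k+1$ in this last piece and combining the two $dy$-sums yields the full range $\sum_{k=1}^{n+1}(cc^\ast)^{k-1}c\,dy\,c^\ast(cc^\ast)^{n+1-k}$, matching the first sum in \eqref{e.cc*n} at level $n+1$.

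The only genuinely new ingredient — and the step I expect to require the most care — is computing the quadratic covariation term $d(cc^\ast)\cdot d[(cc^\ast)^n]$. Using $d(cc^\ast) = 2\sqrt s\, c\,dy\,c^\ast$ and the $dy$-part of the inductive formula for $d[(cc^\ast)^n]$, this product is
\[ 4s\sum_{k=1}^n c\,dy\,c^\ast(cc^\ast)^{k-1}c\,dy\,c^\ast(cc^\ast)^{n-k}. \]
By the free It\^o rule \eqref{e.fIto2}, $dy\,\theta\,dy = \t(\theta)\,dt$, and here the ``middle'' process is $\theta = c^\ast(cc^\ast)^{k-1}c$, whose trace is $\t(c^\ast(cc^\ast)^{k-1}c) = \t((cc^\ast)^k)$ by traciality. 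Hence each term collapses to $4s\,\t[(cc^\ast)^k]\,c\,c^\ast(cc^\ast)^{n-k}\,dt = 4s\,\t[(cc^\ast)^k](cc^\ast)^{n-k+1}\,dt$. Summing over $k=1,\ldots,n$ and reindexing $j = n-k+1$ gives $4s\sum_{j=1}^{n}(n+1-j)\,\t[(cc^\ast)^{n+1-j}](cc^\ast)^{j}\,dt$; relabelling $j\to k$ this is $4s\sum_{k=1}^{n} k\,(cc^\ast)^{n+1-k}\,\t[(cc^\ast)^{k}]\,dt$, which by the symmetry $k\leftrightarrow n+1-k$ (and using traciality so the order inside $\t$ is irrelevant) is exactly $4s\sum_{k=1}^{n} k\,(cc^\ast)^{k}\,\t[(cc^\ast)^{n+1-k}]\,dt$.

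Adding the covariation contribution to the $dt$-term coming from the second term of the product rule gives $4s\sum_{k=1}^{n-1}k(cc^\ast)^{k+1}\t[(cc^\ast)^{n-k}]\,dt + 4s\sum_{k=1}^{n}k(cc^\ast)^{k}\t[(cc^\ast)^{n+1-k}]\,dt$; reindexing the first sum by $\ell=k+1$ turns it into $4s\sum_{\ell=2}^{n}(\ell-1)(cc^\ast)^{\ell}\t[(cc^\ast)^{n+1-\ell}]\,dt$, and combining the two telescopes to $4s\sum_{k=1}^{n}k(cc^\ast)^{k}\t[(cc^\ast)^{n+1-k}]\,dt$, which is the $n\ge 2$-type sum in \eqref{e.cc*n} at level $n+1$ (the $k=n+1$ boundary term vanishes since $\t[(cc^\ast)^0]=1$ would multiply $(cc^\ast)^{n+1}$ with coefficient $0$ under the correct bookkeeping, and indeed the range is $1\le k\le n$). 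This completes the induction. The main obstacle is purely bookkeeping: keeping the two reindexings consistent and invoking traciality at the right moments so that all orderings inside traces and the symmetry $k\leftrightarrow n+1-k$ line up; there is no analytic subtlety beyond the already-established free It\^o calculus of Lemma \ref{l.fIto}.
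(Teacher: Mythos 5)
Your strategy matches the paper's proof exactly: induct on $n$, apply the free It\^o product rule to $(cc^\ast)^{n+1} = (cc^\ast)\cdot(cc^\ast)^n$, combine the $dy$-terms after reindexing, and handle the quadratic covariation with $dy\,\theta\,dy = \t(\theta)\,dt$. The base case, the combination of the two $dy$-sums, and the reduction $dy\,c^\ast(cc^\ast)^{k-1}c\,dy = \t[(cc^\ast)^k]\,dt$ are all correct.

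However, there is a bookkeeping error in the covariation term that propagates and makes the final arithmetic inconsistent. The cross term is
\[
4s\sum_{k=1}^n \t[(cc^\ast)^k]\,(cc^\ast)^{n+1-k}\,dt,
\]
with coefficient $1$ (not $k$) on each summand: the rule $dy\,\theta\,dy = \t(\theta)\,dt$ produces no combinatorial weight. After your reindexing $j = n+1-k$ this becomes $4s\sum_{j=1}^n (cc^\ast)^j\,\t[(cc^\ast)^{n+1-j}]\,dt$ --- but you write it with an extra factor $(n+1-j)$, which has no source. You then try to repair this with a symmetry $k\leftrightarrow n+1-k$, but that swap sends the coefficient $k$ to $n+1-k$, so it does not produce the expression you state. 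Finally, if one adds your (inflated) cross-term $4s\sum_{k=1}^n k(cc^\ast)^k\t[(cc^\ast)^{n+1-k}]$ to the reindexed middle $dt$-term $4s\sum_{\ell=2}^n(\ell-1)(cc^\ast)^\ell\t[(cc^\ast)^{n+1-\ell}]$, the termwise coefficients become $1, 3, 5, \dots, 2n-1$ rather than $1,2,\dots,n$, so the claimed ``telescoping'' to the target expression fails on your own formulas. The correct (and simpler) bookkeeping is: the cross-term contributes coefficient $1$ per index, the middle term contributes $k-1$, and $1 + (k-1) = k$ gives the desired $4s\sum_{k=1}^n k(cc^\ast)^k\t[(cc^\ast)^{n+1-k}]\,dt$ at level $n+1$ with no symmetry argument or telescoping needed.
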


\begin{proof} When $n=1$, (\ref{e.cc*n}) reduces to (\ref{e.dcc*0}), so we proceed by induction: suppose that (\ref{e.cc*n}) has been verified up to level $n$.   Then we use the It\^o product formula (\ref{e.fItoprod}), together with (\ref{e.dcc*0}) and (\ref{e.cc*n}), to compute
\begin{align*} d[(cc^\ast)^{n+1}] &= d(cc^\ast)\cdot (cc^\ast)^n + cc^\ast\cdot d[(cc^\ast)^n] + d(cc^\ast)\cdot d[(cc^\ast)^n] \\
&= 2\sqrt{s}\, c\,dy\,c^\ast(cc^\ast)^n +2\sqrt{s}\sum_{k=1}^n (cc^\ast)^{k}c\,dy\,c^\ast(cc^\ast)^{n-k} +4s\sum_{k=1}^{n-1} k(cc^\ast)^{k+1}\t[(cc^\ast)^{n-k}]\,dt \\
&\hspace{3.25in} + 4s\sum_{k=1}^n c\,dy\,c^\ast(cc^\ast)^{k-1}c\,dy\,c^\ast(cc^\ast)^{n-k}.
\end{align*}
Reindexing $\ell=k+1$, the first two terms combine to give $2\sqrt{s}\sum_{\ell=1}^{n+1} (cc^\ast)^{\ell-1}c\,dy\,c^\ast(cc^\ast)^{n+1-\ell}$.  In the last term, we use (\ref{e.fIto2}) to yield
\[ dy\,c^\ast(cc^\ast)^{k-1}c\,dy = \t(c^\ast(cc^\ast)^{k-1}c)\,dt = \t[(cc^\ast)^k]\,dt. \]
Hence, reindexing $j=n+1-k$, the final sum is
\[ 4s\sum_{k=1}^n \t[(cc^\ast)^k](cc^\ast)^{n+1-k}\,dt = 4s\sum_{j=1}^n (cc^\ast)^j \t[(cc^\ast)^{n+1-j}]. \]
Also reindexing the penultimate sum with $\ell=k+1$, the last two sums combine to give
\[ 4s\sum_{\ell=2}^n (\ell-1)(cc^\ast)^\ell \t[(cc^\ast)^{n+1-\ell}]\,dt + 4s\sum_{j=1}^n (cc^\ast)^j \t[(cc^\ast)^{n+1-j}]. \]
Note that the first sum could just as well be started at $\ell=1$ (since that term is $0$), and these two combine to give the second term in (\ref{e.cc*n}), concluding the inductive proof. \end{proof}

\begin{corollary} \label{cor.bb*} The moments of $cc^\ast$ are $\t[(cc^\ast)^n] = \varrho_n(-4st)$; consequently, the moments of $bb^\ast$ are $\t[(bb^\ast)^n] = \nu_n(-4st)$, verifying (\ref{e.m3}). \end{corollary}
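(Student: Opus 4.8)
The plan is to mimic the proof of Corollary \ref{c.moments1}, using Proposition \ref{p.cc*n} in place of Proposition \ref{p.a} and running the argument for the self-adjoint process $cc^\ast$ rather than $a$.  First I would take the normalized trace of the fSDE (\ref{e.cc*n}).  Since $y=y(t)$ is a free semicircular Brownian motion and $c = c_{r,s}(t)$ is adapted (being a deterministic scalar multiple of $b_{r,s}(t)$), the free It\^o relation (\ref{e.fIto1}) annihilates every term of the first sum in (\ref{e.cc*n}), leaving the coupled system of ODEs
\[ \frac{d}{dt}\t[(cc^\ast)^n] = 4s\,\1_{n\ge 2}\sum_{k=1}^{n-1} k\,\t[(cc^\ast)^k]\,\t[(cc^\ast)^{n-k}], \qquad n\in\N^\ast, \]
with initial data $\t[(c(0)c(0)^\ast)^n] = 1$ because $c(0)=b(0)=1$.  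The $n=1$ equation has empty right-hand side, so $\t[cc^\ast]\equiv 1$; hence $n\mapsto\t[(cc^\ast)^n]$ satisfies precisely the structural relations ($g_1\equiv 1$, $g_n(0)=1$, and the quadratic recursion) that characterize $\varrho_n$ in Lemma \ref{l.identnu}, up to the scale factor $-4s$.

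Next I would absorb that scale factor by a time change.  For $s>0$, set $g_n(\sigma) \equiv \t[(c_{r,s}(-\sigma/(4s))c_{r,s}(-\sigma/(4s))^\ast)^n]$ for $\sigma\le 0$.  The chain rule turns the displayed ODE into
\[ g_n'(\sigma) = -\1_{n\ge 2}\sum_{k=1}^{n-1} k\,g_k(\sigma)\,g_{n-k}(\sigma), \qquad g_1\equiv 1, \quad g_n(0)=1. \]
This is exactly the system of Lemma \ref{l.identnu}; since it is triangular (each $g_n$ is recovered from the already-determined lower-order functions by a single integration, hence extends uniquely and globally), we get $g_n = \varrho_n$.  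Evaluating at $\sigma = -4st$ yields $\t[(cc^\ast)^n] = \varrho_n(-4st)$, the first assertion of the corollary.

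Finally, to pass to $bb^\ast$, I would use $c_{r,s}(t) = e^{-st}b_{r,s}(t)$ from Lemma \ref{l.c0}, so that $bb^\ast = e^{2st}cc^\ast$ and $\t[(bb^\ast)^n] = e^{2nst}\varrho_n(-4st)$.  Substituting $\varrho_n(\sigma) = e^{n\sigma/2}\nu_n(\sigma)$ with $\sigma = -4st$ gives $\varrho_n(-4st) = e^{-2nst}\nu_n(-4st)$, and the two exponential factors cancel, yielding $\t[(bb^\ast)^n] = \nu_n(-4st)$, which is (\ref{e.m2}).  I do not expect a genuine obstacle: the argument is a direct parallel of Corollary \ref{c.moments1}, and the only points needing care are the base case $\t[cc^\ast]\equiv 1$, the sign and scaling bookkeeping in the time change, and the invocation of the uniqueness half of Lemma \ref{l.identnu}.
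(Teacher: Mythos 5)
Your proposal is correct and follows essentially the same route as the paper's own proof: take $\t$ of \eqref{e.cc*n}, use \eqref{e.fIto1} to kill the martingale sum, time-change the resulting triangular ODE system by $t\mapsto -\sigma/(4s)$ (for $\sigma\le 0$) to match Lemma \ref{l.identnu}, and then unwind $c=e^{-st}b$. The only thing the paper does that you omit is the degenerate case $s=0$: there the right-hand side of the ODE vanishes, so $\t[(cc^\ast)^n]\equiv 1=\varrho_n(0)$ directly, and the time change (which divides by $4s$) is unavailable. It is worth adding that one sentence so the statement holds for all $s\ge 0$ as claimed. Incidentally, you correctly note that the equation being verified is \eqref{e.m2}, not \eqref{e.m3} as typed in the corollary's statement.
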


\begin{proof}  Since $b(0)=1$, $\t[(cc^\ast(0))^n]=1=\varrho_n(0)$ for all $n$.  Taking the trace of (\ref{e.cc*n}), we have
\begin{equation} \label{e.cor.bb*1} d\t[(cc^\ast)^n] = 4s\1_{n\ge 2}\sum_{k=1}^{n-1} k\t[(cc^\ast)^k]\t[(cc^\ast)^{n-k}]\,dt. \end{equation}
Thus $\frac{d}{dt}\t(cc^\ast) = 0 = \varrho'_1(-4st)$.  If $s=0$, (\ref{e.cor.bb*1}) asserts that $\t[(cc^\ast)^n]= \t[(cc^\ast(0))^n] = 1 = \varrho_n(0\cdot t)$ for all $n$.  If $s\ne 0$, let $\hat\varrho_n(t) = \t[((cc^\ast)(-t/4s))^n]$; then the chain rule applied to (\ref{e.cor.bb*1}) shows that
\[ \hat\varrho_n'(t) = -\1_{n\ge 2}\sum_{k=1}^{n-1} k \hat\varrho_k(t)\hat\varrho_{n-k}(t). \]
By Lemma \ref{l.identnu}, it follows that $\hat{\varrho}_n(t) = \varrho_n(t)$ for all $n,t\ge 0$.  Hence,
\[ \t[(cc^\ast)^n)] = \varrho_n(-4st) = e^{\frac{n}{2}(-4s)t}\nu_n(-4st), \]
as claimed.  As defined in Lemma \ref{l.c0}, we therefore have
\[ \t[(bb^\ast)^n] = \t[(e^{2st}cc^\ast)^n] = e^{-2nst}\varrho_n(-4st) = \nu_n(-4st), \]
verifying (\ref{e.m2}), and concluding the proof.  \end{proof}

\subsection{The Trace of $b_{r,s}(t)^2 b_{r,s}(t)^{\ast 2}$}

Finally, we calculate $\t(b^2b^{\ast 2})$.  To that end, we need the following cubic moment as part of the recursive computation.

\begin{lemma} \label{l.daaa*} Let $a= e^{\frac12(r-s)t}b$ as in Lemma \ref{l.da}.  Then
\begin{equation} \label{e.daaa*.new} \t(a^2a^\ast) = (1+2st)e^{(s+r)t}. \end{equation}
\end{lemma}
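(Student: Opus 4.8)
The plan is to work with $a = a_{r,s}(t)$ rather than $b$ directly, since by Lemma \ref{l.da} it satisfies the drift-free equation $da = a\,dw$, and to derive from this a closed linear ODE for $f(t) := \t(a^2a^\ast)$.

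First I would record $d(a^2)$ from Proposition \ref{p.a} at level $n=2$. Since $\t(a) = \varrho_1((r-s)t) = 1$ (Corollary \ref{c.moments1} together with $\varrho_1\equiv 1$ from Lemma \ref{l.identnu}), this reads $d(a^2) = a\,dw\,a + a^2\,dw + (s-r)a\,dt$, and of course $d(a^\ast) = dw^\ast\,a^\ast$. Applying the free It\^o product rule (\ref{e.fItoprod}) to $a^2\cdot a^\ast$, the covariation term $d(a^2)\cdot d(a^\ast)$ is evaluated with Lemma \ref{l.dwdw*}, using the $+$ sign throughout since the two differentials carry opposite adjoints: the $a\,dw\,a$ piece produces $dw\,a\,dw^\ast = (s+r)\t(a)\,dt = (s+r)\,dt$, the $a^2\,dw$ piece produces $dw\,dw^\ast = (s+r)\t(1)\,dt = (s+r)\,dt$, and the drift piece $(s-r)a\,dt$ contributes nothing. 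Collecting all terms gives
\[ d(a^2 a^\ast) = a\,dw\,a a^\ast + a^2\,dw\,a^\ast + a^2\,dw^\ast a^\ast + \big[(s-r)+(s+r)\big]aa^\ast\,dt + (s+r)a^2a^\ast\,dt. \]

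Next I would take $\t$ of both sides: the three stochastic terms vanish by (\ref{e.fIto1}), leaving $df(t) = 2s\,\t(aa^\ast)\,dt + (s+r)f(t)\,dt$. For the remaining first moment, note that $a = e^{\frac12(r-s)t}b$ gives $aa^\ast = e^{(r-s)t}bb^\ast$, so by the $n=1$ case of Corollary \ref{cor.bb*} and the formula (\ref{e.nuNu0}) (which gives $\nu_1(\tau) = e^{-\tau/2}$, hence $\nu_1(-4st) = e^{2st}$), we get $\t(aa^\ast) = e^{(r-s)t}\cdot e^{2st} = e^{(r+s)t}$. Therefore $f$ solves the linear ODE $f'(t) = (s+r)f(t) + 2s\,e^{(s+r)t}$ with $f(0) = \t(1) = 1$; multiplying by the integrating factor $e^{-(s+r)t}$ gives $\frac{d}{dt}\big(e^{-(s+r)t}f(t)\big) = 2s$, so $e^{-(s+r)t}f(t) = 1 + 2st$, which is exactly (\ref{e.daaa*.new}).

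The computation is essentially mechanical; the only step that needs care is the bookkeeping of the covariation term in the product rule --- in particular applying Lemma \ref{l.dwdw*} with the correct $+$ sign to each of the two non-drift pieces of $d(a^2)$ --- and remembering to substitute $\t(a)=1$ before integrating, which is what keeps the resulting ODE linear with constant coefficients.
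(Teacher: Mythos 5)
Your proof is correct and follows essentially the same path as the paper: expand $d(a^2a^\ast)$ via the free It\^o product rule, evaluate the covariation terms with Lemma~\ref{l.dwdw*}, take $\t$ to obtain the linear ODE $\frac{d}{dt}\t(a^2a^\ast) = 2s\,e^{(s+r)t} + (s+r)\t(a^2a^\ast)$, and solve with $\t(a^2a^\ast)(0)=1$. The only cosmetic difference is that you repackage the computation through Proposition~\ref{p.a} at $n=2$ and then apply the two-factor product rule to $a^2\cdot a^\ast$, whereas the paper expands the three-factor product $a\cdot a\cdot a^\ast$ directly; both routes produce the identical ODE.
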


\begin{proof} From the It\^o product rule (\ref{e.fItoprod}), we have
\[ d(a^2a^\ast) = da\cdot aa^\ast + a\cdot da\cdot da^\ast + a^2 da^\ast + (da)^2\cdot a^\ast + da\cdot a\cdot da^\ast + a\cdot da\cdot da^\ast. \]
Lemma \ref{l.da} asserts that $da=a\,dw$.  To compute $d\t(a^2a^\ast)$, we can ignore the first three terms that have trace $0$ by (\ref{e.fIto1}); the last three terms become
\[ a\,dw\,a\,dw\,a^\ast + a\,dw\,a\,dw^\ast\,a^\ast + a^2\,dw\,dw^\ast\,a^\ast = (s-r)\t(a)aa^\ast\,dt + (s+r)\t(a)aa^\ast\,dt + (s+r)a^2a^\ast\,dt \]
by Lemma \ref{l.dwdw*}.  Taking traces, we therefore have
\begin{equation} \label{e.daaa*DE1} d\t(a^2a^\ast) = 2s\t(a)\t(aa^\ast)\,dt + (s+r)\t(a^2a^\ast)\,dt. \end{equation}
In Corollary \ref{c.moments1}, we computed that $\t(a) = \varrho_1((r-s)t) = e^{\frac12(r-s)t}\nu_1((r-s)t)$, which, referring to (\ref{e.nuNu0}), is equal to $1$.  Similarly, in Corollary \ref{cor.bb*}, we calculated that $\t(bb^\ast) = \nu_1(-4st) = e^{2st}$, and so $\t(aa^\ast) = e^{(r-s)t}\t(bb^\ast) = e^{(r+s)t}$.  Hence, (\ref{e.daaa*DE1}) reduces to the ODE
\[ \frac{d}{dt}\t(a^2a^\ast) = 2se^{(r+s)t} + (s+r)\t(a^2a^\ast), \qquad \t(a^2a^\ast(0)) = 1. \]
It is simple to verify that (\ref{e.daaa*.new}) is the unique solution of this ODE.  \end{proof}
\begin{remark} As a sanity check, note that in the case $(r,s)=(1,0)$ (\ref{e.daaa*.new}) shows that $\t(b^2b^\ast) = e^{-\frac{3}{2}t}\t(a^2a^\ast) = e^{-t/2}$.  As pointed out in (\ref{e.fubm}), $b_{1,0}(t) = u(t)$ is a free unitary Brownian motion, and so $\t(b^2b^\ast) = \t(b)$ in this case; thus, we have consistency with (\ref{e.nuNu0}).
\end{remark}

\begin{proposition} \label{p.bbb*b*}  Let $a= e^{\frac12(r-s)t}b$ as in Lemma \ref{l.da}.  Then
\begin{equation} \label{e.aaa*a*} \t(a^2a^{\ast 2}) = 4st(1+st)e^{(s+r)t} + e^{2(s+r)t} \end{equation}
and thus (\ref{e.m3}) holds true.
\end{proposition}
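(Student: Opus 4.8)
The plan is to mimic the pattern of the preceding moment computations: set $F(t) = \t(a^2a^{\ast 2})$, derive a first-order linear ODE for $F$, and solve it. First I would apply the free It\^o product rule (\ref{e.fItoprod}) with the two-factor splitting $a^2a^{\ast 2} = (a^2)(a^{\ast 2})$. The $n=2$ instance of Proposition \ref{p.a}, together with the fact (from Corollary \ref{c.moments1}, since $\nu_1\equiv 1$) that $\t(a)\equiv 1$, reads $d(a^2) = a\,dw\,a + a^2\,dw + (s-r)a\,dt$; taking adjoints gives $d(a^{\ast 2}) = a^\ast\,dw^\ast\,a^\ast + dw^\ast\,a^{\ast 2} + (s-r)a^\ast\,dt$. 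Hence
\[ d(a^2a^{\ast 2}) = d(a^2)\cdot a^{\ast 2} + a^2\cdot d(a^{\ast 2}) + d(a^2)\cdot d(a^{\ast 2}). \]

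Next I would take the trace. By (\ref{e.fIto1}) every term carrying a single $dw$ or $dw^\ast$ vanishes, so the first two summands contribute only their $dt$-parts, which total $2(s-r)\t(a^2a^\ast)\,dt$ after noting that $\t(aa^{\ast 2}) = \overline{\t(a^2a^\ast)} = \t(a^2a^\ast)$, the last equality because Lemma \ref{l.daaa*} shows $\t(a^2a^\ast) = (1+2st)e^{(s+r)t}$ is real. The one genuinely computational step is the quadratic-variation term $d(a^2)\cdot d(a^{\ast 2})$: expanding the product and retaining only the four products of a $dw$-differential against a $dw^\ast$-differential, one gets summands of the shapes $a\,dw\,(aa^\ast)\,dw^\ast\,a^\ast$, $a\,dw\,a\,dw^\ast\,a^{\ast 2}$, $a^2\,dw\,a^\ast\,dw^\ast\,a^\ast$, and $a^2\,dw\,dw^\ast\,a^{\ast 2}$. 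Each middle factor $dw\,\theta\,dw^\ast$ is collapsed by Lemma \ref{l.dwdw*} to $(s+r)\t(\theta)\,dt$ (the sign is $+$ since $dw$ and $dw^\ast$ are paired). Using $\t(a)\equiv\t(a^\ast)\equiv 1$, $\t(aa^\ast) = e^{(r+s)t}$ (from Corollary \ref{cor.bb*}, as $\nu_1(-4st)=e^{2st}$), $\t(a^2a^\ast) = (1+2st)e^{(s+r)t}$, and $\t(a^2a^{\ast 2}) = F(t)$, the trace of this term equals $(s+r)\bigl[e^{2(r+s)t} + 2(1+2st)e^{(s+r)t} + F(t)\bigr]\,dt$.

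Assembling everything yields the linear ODE
\[ F'(t) = (r+s)F(t) + 4s(1+2st)e^{(s+r)t} + (r+s)e^{2(s+r)t}, \qquad F(0) = \t(1) = 1, \]
and a direct substitution verifies that the right-hand side of (\ref{e.aaa*a*}) is its unique solution, establishing (\ref{e.aaa*a*}). Finally, since $a = e^{\frac12(r-s)t}b$ we have $a^2a^{\ast 2} = e^{2(r-s)t}b^2b^{\ast 2}$, so $\t(b^2b^{\ast 2}) = e^{-2(r-s)t}F(t)$; the exponents collapse via $(s+r)t - 2(r-s)t = (3s-r)t$ and $2(s+r)t - 2(r-s)t = 4st$, producing exactly (\ref{e.m3}).

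I expect the only real friction to be the bookkeeping in the quadratic-variation term --- correctly tracking which scalar trace each Lemma \ref{l.dwdw*} contraction produces, and which lower-order trace moments (all already computed in Corollaries \ref{c.moments1} and \ref{cor.bb*} and Lemma \ref{l.daaa*}) it depends on --- but no conceptual obstacle arises, and the resulting ODE is elementary.
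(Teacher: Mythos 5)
Your proof is correct, and it arrives at exactly the paper's ODE (\ref{e.finalODE}) with the same initial condition, which pins down the claimed formula.

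The route differs from the paper's in one organizational respect. The paper applies the It\^o product rule directly to the four-factor word $a\cdot a\cdot a^\ast\cdot a^\ast$, producing the full $\binom{4}{2}=6$ quadratic-variation terms of (\ref{e.daaa*a*2})--(\ref{e.daaa*a*3}); two of these are like-type pairings ($dw$-$dw$ and $dw^\ast$-$dw^\ast$) that carry an $(s-r)$ prefactor, and the remaining four carry $(s+r)$. You instead split $a^2a^{\ast 2}=(a^2)(a^{\ast 2})$, invoke the $n=2$ case of Proposition \ref{p.a} to get $d(a^2)$ in closed form, and take adjoints for $d(a^{\ast 2})$. The $(s-r)$ contributions then come in already absorbed into the drift terms of $d(a^2)$ and $d(a^{\ast 2})$ (since Proposition \ref{p.a} was proved precisely by contracting the like-type quadratic variations), so your single two-factor quadratic-variation term produces only the four cross-pairings, each collapsing to $(s+r)\t(\cdot)$ by Lemma \ref{l.dwdw*}. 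The two decompositions redistribute exactly the same six pairings, so they necessarily yield the same ODE; yours buys a modest economy by reusing Proposition \ref{p.a} rather than re-expanding from scratch, at the small cost of needing the observation $\t(aa^{\ast 2})=\overline{\t(a^2a^\ast)}=\t(a^2a^\ast)$, which you correctly justify from the realness established in Lemma \ref{l.daaa*}. The substitution $b=e^{-\frac12(r-s)t}a$ at the end and the exponent bookkeeping $(s+r)t-2(r-s)t=(3s-r)t$ and $2(s+r)t-2(r-s)t=4st$ are also correct, recovering (\ref{e.m3}).
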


\begin{proof} Expanding, once again, using the It\^o product rule (\ref{e.fItoprod}), we have
\begin{align} \label{e.daaa*a*1} d(a^2a^{\ast 2}) &= da\cdot aa^{\ast 2} + a\cdot da\cdot a^{\ast 2} + a^2\cdot da^\ast\cdot a^\ast + a^2a^\ast\cdot da^\ast \\
\label{e.daaa*a*2} & \qquad + (da)^2\cdot a^{\ast 2} + da\cdot a\cdot da^\ast\cdot a^\ast + da\cdot aa^\ast\cdot da^\ast \\
\label{e.daaa*a*3} & \qquad + a\cdot da\cdot da^\ast\cdot a^\ast + a\cdot da\cdot a^\ast\cdot da^\ast + a^2\cdot (da^\ast)^2.
\end{align}
The terms in (\ref{e.daaa*a*1}) all have trace $0$.  We simplify the terms in (\ref{e.daaa*a*2}) and (\ref{e.daaa*a*3}) using $da = a\,dw$ and Lemma \ref{l.dwdw*} as follows:
\begin{align*} \text{(\ref{e.daaa*a*2})} &= a\,dw\,a\,dw\,a^{\ast 2} + a\,dw\,a\,dw^\ast\,a^{\ast 2} + a\,dw\,aa^\ast\,dw^\ast\,a^\ast \\
&= (s-r)\t(a)aa^{\ast 2}\,dt + (s+r)\t(a)aa^{\ast 2}\,dt + (s+r)\t(aa^\ast)aa^\ast\,dt,
\end{align*}
and
\begin{align*} \text{(\ref{e.daaa*a*3})} &= a^2\,dw\,dw^\ast\,a^{\ast 2} + a^2\,dw\,a^\ast\,dw^\ast\,a^\ast + a^2\,dw^\ast\,a^\ast\,dw^\ast\,a^\ast \\
&= (s+r)a^2a^{\ast 2}\,dt + (s+r)\t(a^\ast)a^2a^\ast\,dt + (s-r)\t(a^\ast)a^2a^\ast\,dt.
\end{align*}
Taking traces, and using the fact (from Lemma \ref{l.daaa*}) that $\t(a^\ast)\t(a^2a^\ast)$ is real, this yields
\[ d\t(a^2a^{\ast 2}) = 2s\t(a)\t(aa^{\ast 2})\,dt + (s+r)[\t(aa^\ast)]^2\,dt + (s+r)\t(a^2a^{\ast 2})\,dt + 2s\t(a^\ast)\t(a^2a^\ast)\,dt. \]
Using (\ref{e.daaa*.new}), together with (\ref{e.m2}) and the fact (pointed out in the proof of Lemma \ref{l.daaa*}) that $\t(a)= 1$, gives
\begin{equation} \label{e.finalODE} \frac{d}{dt}\t(a^2a^{\ast 2}) = 4s(1+2st)e^{(s+r)t} + (s+r)e^{2(s+r)t}+(s+r)\t(a^2a^{\ast 2}). \end{equation}
It is easy to verify that (\ref{e.aaa*a*}) is the unique solution to this ODE with initial condition $1$.  Substituting $b = e^{\frac12(s-r)t}a$ then yields (\ref{e.m3}).
\end{proof}

\begin{remark} Again, as a sanity check, (\ref{e.m3}) reduces to $\t(b^2b^{\ast 2}) = 1$ when $s=0$; this is consistent with the fact that $b$ is unitary in this case.  \end{remark}

\section{Properties of the Brownian Motions \label{section Properties of the Brownian Motions}}

Theorem \ref{t.brst} summarizes the main properties of both the matrix Brownian motions $B_{r,s}^N(t)$ on $\GL_N$ and its limit $(b_{r,s}(t))_{t\ge 0}$.  We will prove these properties separately for finite $N$ versus the limit, although in many cases the proofs are extremely similar.

\subsection{Properties of $(B^N_{r,s}(t))_{t\ge 0}$}

We begin by noting that the invertibility of $B^N_{r,s}(t)$ follows from the mSDE (\ref{e.mSDE-B}).

\begin{proposition} \label{p.B1} The diffusion $B^N_{r,s}(t)$ is invertible for all $t\ge 0$ (with probability $1$); the inverse $B^N_{r,s}(t)^{-1}$ is a {\em right}-invariant version of an $(r,s)$-Brownian motion.
\end{proposition}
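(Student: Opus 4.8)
The plan is to produce the inverse process explicitly and verify $B^N_{r,s}(t)\,C(t)=I_N$ by the matrix It\^o calculus of Lemma~\ref{l.mIto}, then recognize $C(t)$ as the right-invariant analogue of $B^N_{r,s}(t)$.

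First I would let $C(t)$ be the unique $\M_N$-valued solution of the ``right-multiplicative mirror'' of the mSDE (\ref{e.mSDE-B}),
\[ dC(t) = -\,dW^N_{r,s}(t)\,C(t) - \tfrac12(r-s)C(t)\,dt, \qquad C(0)=I_N; \]
existence and uniqueness of a strong solution are standard since the coefficients are linear (hence globally Lipschitz), and $C$ is an It\^o process of the form to which the product rule (\ref{e.mItoprod}) applies. Applying that rule to $B^N_{r,s}(t)C(t)$, the two first-order terms each produce a copy of $B^N_{r,s}\,dW^N_{r,s}\,C$ with opposite signs (which cancel) together with a term $-\tfrac12(r-s)B^N_{r,s}C\,dt$; and the quadratic term is $dB^N_{r,s}\cdot dC = -B^N_{r,s}\,\big(dW^N_{r,s}\,dW^N_{r,s}\big)\,C$. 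Writing $dW^N_{r,s}=\sqrt r\,i\,dX^N+\sqrt s\,dY^N$ and using (\ref{e.mIto2})--(\ref{e.mIto3}) (with the matrix between the two differentials equal to $I_N$) gives $dW^N_{r,s}\,dW^N_{r,s}=\big((\sqrt r\,i)^2+(\sqrt s)^2\big)\tr(I_N)I_N\,dt=(s-r)I_N\,dt$, so this term is $(r-s)B^N_{r,s}C\,dt$. Summing the three contributions yields $d\big(B^N_{r,s}(t)C(t)\big)=0$, and since $B^N_{r,s}(0)C(0)=I_N$ we conclude $B^N_{r,s}(t)C(t)=I_N$ for all $t\ge 0$ with probability $1$. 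As $\M_N$ is finite dimensional, this forces $B^N_{r,s}(t)$ to be invertible with $B^N_{r,s}(t)^{-1}=C(t)$ (and then $C(t)B^N_{r,s}(t)=I_N$ automatically).

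For the second assertion I would note that $\widetilde W(t):=-W^N_{r,s}(t)=\sqrt r\,i(-X^N(t))+\sqrt s\,(-Y^N(t))$ has the same law as $W^N_{r,s}(t)$, since $(-X^N,-Y^N)$ is again a pair of independent $\mathrm{GUE}_N$ Brownian motions; thus $C$ solves $dC=d\widetilde W\,C-\tfrac12(r-s)C\,dt$, $C(0)=I_N$. Repeating the derivation of (\ref{e.mSDE-B}) from the Stratonovich equation (\ref{e.Strat})--(\ref{e.B(t)Ito}) verbatim, but with the right-invariant vector fields $\del^R_\xi$ in place of the left-invariant $\del_\xi$ (the magic formula (\ref{e.magic}) still gives $\sum_{\xi\in\beta^N_{r,s}}\xi^2=-(r-s)I_N$, so the It\^o drift is unchanged), shows that this is precisely the mSDE for the Brownian motion of the \emph{right}-invariant Riemannian metric induced by $\langle\cdot,\cdot\rangle^N_{r,s}$. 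Equivalently, one may use the inversion diffeomorphism $\iota\colon g\mapsto g^{-1}$ of $\GL_N$, which fixes $I_N$ and satisfies $\del_\xi\circ\iota^\ast=-\iota^\ast\circ\del^R_\xi$, so $\iota$ intertwines $\tfrac12\Delta^N_{r,s}$ with the right-invariant Laplacian; uniqueness of the diffusion with prescribed generator started at $I_N$ then identifies $(B^N_{r,s}(t)^{-1})_{t\ge 0}$ as the right-invariant $(r,s)$-Brownian motion.

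The computation is otherwise routine; the only steps needing care are guessing the correct mSDE for $C$ (in particular, that the sign on the $dW^N_{r,s}$-term flips while the drift coefficient is unchanged) and making the phrase ``right-invariant version'' precise via the Stratonovich-to-It\^o recipe or the map $\iota$.
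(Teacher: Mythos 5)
Your proposal matches the paper's proof essentially verbatim: the paper defines the same mirror process $A^N_{r,s}(t)$ solving (\ref{e.mSDE-Binv}), computes $d(B^N_{r,s}A^N_{r,s})=0$ via (\ref{e.mItoprod}) and $(dW^N_{r,s})^2=(s-r)I_N\,dt$, and identifies $A^N_{r,s}$ as a right-invariant version by noting $(-X^N,-Y^N)$ is again a pair of independent $\mathrm{GUE}_N$ Brownian motions and that the right-invariant Stratonovich-to-It\^o conversion yields (\ref{e.mSDE-Binv}). Your slightly longer justification of the ``right-invariant version'' claim (the $\iota$-intertwining alternative, and the explicit check that $BC=I_N$ forces $CB=I_N$ in finite dimensions) fills in details the paper leaves to the reader but adds nothing structurally new.
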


\begin{proof} Fix a Brownian motion $W_{r,s}^N(t) = \sqrt{r}\,iX^N(t) + \sqrt{s}\,Y^N(t)$ on $\gl_N$, so that $B^N_{r,s}(t)$ is the solution of (\ref{e.mSDE-B}) with respect to $W_{r,s}^N(t)$.  Then define $A^N_{r,s}(t)$ to be the solution to
\begin{equation} \label{e.mSDE-Binv} dA^N_{r,s}(t) = -dW_{r,s}^N(t)\,A^N_{r,s}(t)-\frac12(r-s)A^N_{r,s}(t)\,dt. \end{equation}
Note that $-X^N(t)$ and $-Y^N(t)$ are also independent $\mathrm{GUE}_N$ Brownian motions, so $A^N_{r,s}(t)$ is a right-invariant version of $B^N_{s,t}(t)$.  (Indeed, the reader can readily check that, if $\del_\xi$ is replaced with the right-invariant derivative $\frac{d}{dt}f(\exp(-t\xi)g)$, thus defining a right-invariant Laplacian, the associated Brownian motion satisfies (\ref{e.mSDE-Binv}).)  To simplify notation, let $W=W^N_{r,s}(t)$, $B=B^N_{r,s}(t)$, and $A=A^N_{r,s}(t)$.  Using the It\^o product rule (\ref{e.mItoprod}), we have
\begin{align*} d(BA)&= dB\cdot A + B\cdot dA + dB\cdot dA \\
&= B\,dW\,A - \frac12(r-s)BA\,dt -B\, dW\,A - \frac12(r-s)BA\,dt - B\,(dW)^2\,A.
\end{align*}
From (\ref{e.mIto2}) -- (\ref{e.mIto4}), we compute exactly as in Lemma \ref{l.dwdw*} that $(dW)^2 = (s-r)I_N\,dt$.  This shows that $d(BA) = 0$.  Since $B^N_{r,s}(0)=A^N_{r,s}(0) = I_N$, it follows that $BA = I_N$, so $A^N_{r,s}(t) = B^N_{r,s}(t)^{-1}$, as claimed.
\end{proof}

\begin{proposition} \label{p.B2} The multiplicative increments of $(B^N_{r,s}(t))_{t\ge 0}$ are independent and stationary.
\end{proposition}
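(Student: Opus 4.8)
The plan is to exploit the fact that $(B^N_{r,s}(t))_{t\ge 0}$ is the solution of the left-invariant mSDE (\ref{e.mSDE-B}) driven by the Lévy process $W^N_{r,s}(t)$, whose increments are manifestly independent and stationary since $W^N_{r,s}(t)$ is a Brownian motion (a sum of i.i.d.\ scalar Brownian motions times fixed Lie algebra elements). The key structural observation is the \emph{flow property}: for $0\le u\le t$, one can write $B^N_{r,s}(t) = B^N_{r,s}(u)\cdot C(u,t)$, where $C(u,t)$ is the solution, started at $I_N$ at time $u$, of the \emph{same} mSDE but driven by the shifted increment process $W^N_{r,s}(v)-W^N_{r,s}(u)$, $v\ge u$. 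This follows directly from the left-invariance of the equation: if we set $C(u,t) = B^N_{r,s}(u)^{-1}B^N_{r,s}(t)$ (which makes sense by Proposition \ref{p.B1}), then applying the It\^o product rule (\ref{e.mItoprod}) together with the formula for $dB^N_{r,s}(t)^{-1}$ extracted from the proof of Proposition \ref{p.B1} shows that $dC(u,t) = C(u,t)\,d\big(W^N_{r,s}(t)-W^N_{r,s}(u)\big) - \tfrac12(r-s)C(u,t)\,dt$ with $C(u,u)=I_N$. Thus the multiplicative increment $B^N_{r,s}(u)^{-1}B^N_{r,s}(t)$ is a measurable functional of $\{W^N_{r,s}(v)-W^N_{r,s}(u)\colon u\le v\le t\}$ alone.

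From here the two claims follow from standard properties of the driving Brownian motion. For \textbf{stationarity}: since $v\mapsto W^N_{r,s}(u+v)-W^N_{r,s}(u)$ has the same law as $v\mapsto W^N_{r,s}(v)$, and the increment is the same measurable functional of these driving paths, $B^N_{r,s}(u)^{-1}B^N_{r,s}(t)$ has the same law as $B^N_{r,s}(t-u)$; one should note that the It\^o map is a.s.\ well-defined and measurable, e.g.\ via the Picard iteration used to construct solutions, so this transfer of laws is legitimate. For \textbf{independence of increments}: given $0\le t_0<t_1<\cdots<t_m$, the increment $B^N_{r,s}(t_{j-1})^{-1}B^N_{r,s}(t_j)$ is a functional of $\{W^N_{r,s}(v)-W^N_{r,s}(t_{j-1})\colon t_{j-1}\le v\le t_j\}$, and these path-segments are mutually independent because $W^N_{r,s}$ has independent increments; hence the multiplicative increments are mutually independent. (If one prefers, both statements can be packaged together: $(B^N_{r,s}(t))_{t\ge0}$ is a left L\'evy process on the Lie group $\GL_N$, and such processes by construction have independent stationary increments.)

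The \textbf{main obstacle}, such as it is, is purely bookkeeping: one must be careful that the shifted-and-restarted SDE genuinely has a unique strong solution and that the flow identity $B^N_{r,s}(t) = B^N_{r,s}(u)C(u,t)$ holds pathwise (not merely in law). This is handled by the uniqueness part of the standard existence/uniqueness theorem for matrix SDEs with smooth (here, linear) coefficients: both $B^N_{r,s}(t)$ and $B^N_{r,s}(u)C(u,t)$ solve (\ref{e.mSDE-B}) on $[u,\infty)$ with the same initial condition $B^N_{r,s}(u)$ at time $u$ and the same driving noise, so they coincide. The verification that $C(u,t)=B^N_{r,s}(u)^{-1}B^N_{r,s}(t)$ satisfies the restarted equation is the same It\^o-product computation already carried out in the proof of Proposition \ref{p.B1}, with the time origin shifted to $u$; I will not repeat it in detail.
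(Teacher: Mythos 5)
Your proof is correct and follows essentially the same approach as the paper: express the increment $B^N_{r,s}(u)^{-1}B^N_{r,s}(t)$ as the solution of the restarted SDE driven by the shifted increments of $W^N_{r,s}$, then transfer stationarity and independence from those driving increments. The only cosmetic difference is that you invoke the formula for $dB^{-1}$, which is not needed: since $B^N_{r,s}(u)^{-1}$ is a fixed, $\mathscr{F}_u$-measurable factor for $t\ge u$, one can simply left-multiply the integral equation for $B^N_{r,s}(t)-B^N_{r,s}(u)$ by it to obtain the restarted SDE directly, as the paper does.
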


\begin{proof} Let $0\le t_1<t_2<\infty$, and let $\mathscr{F}_{t_1}$ denote the $\sigma$-field generated by $\{X^N(t),Y^N(t)\}_{0\le t\le t_1}$.  From the defining mSDE (\ref{e.mSDE-B}), we have
\begin{equation*} B^N_{r,s}(t_2)-B^N_{r,s}(t_1) = \int_{t_1}^{t_2} B^N_{r,s}(t)\,dW^N_{r,s}(t) - \frac12(r-s)\int_{t_1}^{t_2}\,B^N_{r,s}(t)\,dt, \end{equation*}
or, in other words,
\begin{equation} \label{e.deltaB} B^N_{r,s}(t_1)^{-1}B^N_{r,s}(t_2) = I_N+ \int_{t_1}^{t_2} B^N_{r,s}(t_1)^{-1}B^N_{r,s}(t)\,dW^N_{r,s}(t) - \frac12(r-s)\int_{t_1}^{t_2}\,B^N_{r,s}(t_1)^{-1}B^N_{r,s}(t)\,dt. \end{equation} 
This shows that the process $C^N(t) = B_{r,s}^N(t_1)^{-1}B^N_{r,s}(t)$ for $t\ge t_1$ satisfies the mSDE
\[ dC^N(t) = C^N(t)\,d(W^N_{r,s}(t)-W^N_{r,s}(t_1)) - \frac12(r-s)C^N(t)\,dt. \]
Note that $W^N_{r,s}(t)-W^N_{r,s}(t_1) = \sqrt{r}\,i(X^N(t)-X^N(t_1)) + \sqrt{s}\,(Y^N(t)-Y^N(t_1))$.  Since $(X^N(t)-X^N(t_1))_{t\ge t_1}$ and $(Y^N(t)-Y^N(t_1))_{t\ge t_1}$ are independent $\mathrm{GUE}_N$ Brownian motions, and since $C^N_{t_1} = I_N$, it follows that $(C^N(t))_{t\ge t_1}$ is a version of $(B^N_{r,s}(t))_{t\ge 0}$.  This shows, in particular, that the multiplicative increments are stationary.  Moreover, (\ref{e.deltaB}) shows that $B^N_{r,s}(t_1)^{-1}B^N_{r,s}(t_2)$ is measurable with respect to the $\sigma$-field generated by the increments $(W^N_{r,s}(t)-W^N_{r,s}(t_1))_{t_1\le t \le t_2}$, which is independent from $\mathscr{F}_{t_1}$ (since the additive increments of $X^N(t)$ and $Y^N(t)$ are independent).  Since all the random matrices $B^N_{r,s}(t')$ with $t'\le t_1$ are $\mathscr{F}_{t_1}$-measurable, it follows that $(B^N_{r,s}(t))_{t\ge 0}$ has independent multiplicative increments, as claimed.
\end{proof}

\break

\begin{proposition} \label{p.B3} For $r,s>0$ and $N\ge 2$, with probability $1$, $B^N_{r,s}(t)$ is non-normal for all $t>0$. \end{proposition}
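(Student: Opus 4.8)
The statement asserts that the random path $t\mapsto B^N_{r,s}(t)$ a.s.\ never meets the locus of normal matrices at a positive time. Write $\mathcal N_N=\{A\in\GL_N:AA^\ast=A^\ast A\}$, so that $B^N_{r,s}(t)$ is non-normal exactly when $B^N_{r,s}(t)\notin\mathcal N_N$. One elementary observation is that $\tr\big((AA^\ast)^2\big)-\tr\big(A^2A^{\ast 2}\big)=\tfrac12\tr\big([A,A^\ast]^2\big)\ge 0$, with equality if and only if $A$ is normal; a moment computation in the spirit of Theorem \ref{t.moments} shows the expectation of this quantity at $A=B^N_{r,s}(t)$ is strictly positive for $t>0$, which already yields non-normality \emph{for each fixed $t$} with positive probability. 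This falls short of the desired conclusion, which is almost sure and uniform in $t>0$ (and for fixed $N$, so that no asymptotics in $N$ are available). The plan instead is to show that $\mathcal N_N$ is a \emph{polar} set for the diffusion $B^N_{r,s}$; since $B^N_{r,s}(0)=I_N\in\mathcal N_N$, polarity must be used in the strong form forbidding a visit at any strictly positive time even though the process starts on the set.

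First I would record the geometry of $\mathcal N_N$ inside the $2N^2$-dimensional real manifold $\GL_N$. Stratifying $\mathcal N_N$ by the eigenvalue multiplicity type $(m_1,\dots,m_p)$ with $m_1+\dots+m_p=N$, each stratum is the set of matrices $U\,\mathrm{diag}(\lambda_1 I_{m_1},\dots,\lambda_p I_{m_p})\,U^\ast$ with $U\in\U_N$ and $\lambda_1,\dots,\lambda_p$ distinct; this is a smooth embedded submanifold of real dimension $2p+N^2-\sum_i m_i^2$ (it fibres, up to a finite quotient, over the eigenvalue data of real dimension $2p$, with fibre the conjugation orbit $\U_N/\prod_i\U_{m_i}$ of real dimension $N^2-\sum_i m_i^2$). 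Hence it has real codimension $N^2+\sum_i m_i^2-2p\ge N^2-p\ge N^2-N=N(N-1)$, using $\sum_i m_i^2\ge p$ and $p\le N$. For $N\ge 2$ this codimension is at least $2$, so $\mathcal N_N$ is a finite union of smooth submanifolds, each of codimension $\ge 2$.

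Second comes the potential theory. Since $r,s>0$, the inner product $\langle\cdot,\cdot\rangle^N_{r,s}$ of \eqref{e.innprodrs} is positive-definite, so $\Delta^N_{r,s}$ is strictly elliptic and $B^N_{r,s}$ is a nondegenerate diffusion with smooth coefficients on $\GL_N$ (indeed the Riemannian Brownian motion of the left-invariant metric it induces, the drift term in \eqref{e.mSDE-B} being the It\^o correction in ambient matrix coordinates). It is a classical fact that a smooth submanifold of codimension $\ge 2$ is polar for any such diffusion: working in a relatively compact chart, after a smooth change of variables and a time change the process is comparable to Brownian motion on $\R^{2N^2}$, and for the latter an affine subspace of codimension $\ge 2$ — hence, by localization, a $C^1$ submanifold of codimension $\ge 2$, being locally a graph over such a subspace — is polar (heuristically, the distance to the submanifold evolves like a Bessel process of dimension $\ge 2$, which, started at $0$, is a.s.\ strictly positive at all positive times). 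A finite union of polar sets is polar; thus $\mathcal N_N$ is polar, and a.s.\ $B^N_{r,s}(t)\notin\mathcal N_N$ for every $t>0$, which is the claim.

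The delicate points I anticipate are two. For $N=2$ the generic stratum already has codimension exactly $2$ (a $6$-dimensional submanifold of the $8$-dimensional manifold $\GL_2$), the borderline case, so one must invoke the true but nontrivial polarity of codimension-$2$ submanifolds rather than the easier strictly-supercritical case. Second, one must handle the fact that the process is started on $\mathcal N_N$: one should either appeal to the standard formulation in which polar sets are, by definition, a.s.\ not hit at strictly positive times from \emph{any} starting point, or argue directly by noting that the law of $B^N_{r,s}(t)$ is absolutely continuous for every $t>0$ while $\mathcal N_N$ is Lebesgue-null, so $B^N_{r,s}(t)\notin\mathcal N_N$ for Lebesgue-a.e.\ $t>0$, and then applying the strong Markov property at such a time together with polarity of $\mathcal N_N$ from starting points off it to conclude for all $t>0$.
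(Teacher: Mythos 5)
Your argument is correct and follows the same strategy as the paper's: bound the codimension of the normal locus by at least $2$ and invoke polarity of such sets for nondegenerate diffusions. The paper reaches the dimension bound more economically, via the single smooth surjection $\widetilde\Phi\colon\mathbb{D}_N\times\U_N/\mathbb{T}_N\to\M_N^{\nor}$ giving $\dim_\R\M_N^{\nor}\le N^2+N$ at once, rather than stratifying by eigenvalue multiplicity type, though both routes yield codimension $\ge N^2-N\ge 2$. For the polarity step the paper cites Ramasubramanian \cite{Rama1998}; your Bessel-process sketch is the right heuristic but should be replaced by an appeal to such a theorem, since the borderline codimension-$2$ case (relevant precisely at $N=2$) requires the genuine result rather than the informal comparison. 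Your care about the process starting at $I_N\in\M_N^{\nor}$ is a legitimate subtlety the paper passes over silently --- its assertion that ``the hitting time is $+\infty$ almost surely'' is to be read under the convention that the hitting time from a point of the set means the first strictly positive hitting time, which is the form in which \cite{Rama1998} is stated --- so your absolute-continuity/strong-Markov workaround is sound but becomes unnecessary once the cited theorem is invoked in that form.
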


\begin{proof} Let $\M_N^{\nor}$ denote the set of normal matrices.  Let $\mathbb{D}_N$ denote the $2N$ (real) dimensional space of diagonal matrices in $\M_N$, and $\mathbb{T}_N\subset\U_N$ the $N$ (real) dimensional maximal torus of diagonal unitary matrices.  The map $\Phi\colon \mathbb{D}_N\times \U_N\to \M_N^{\nor}$ given by $\Phi(D,U) = UDU^\ast$ is smooth, and (by the spectral theorem) surjective.  Since $\Phi(D,U) = \Phi(D,TU)$ for any $T\in\mathbb{T}_N$, the map descends to a smooth surjection $\widetilde{\Phi}\colon \mathbb{D}_N\times \U_N/\mathbb{T}_N\to \M_N^{\nor}$.  It follows that
\[ \mathrm{dim}_\R(\M_N^{\nor}) \le \mathrm{dim}_\R(\mathbb{D}_N) + \mathrm{dim}_\R(\U_N/\mathbb{T}_N) = 2N + N^2-N = N^2+N. \]
Thus, as a submanifold of $\M_N$ (which has real dimension $2N^2$), $\mathrm{codim}_\R(\M_N^{\nor}) \ge 2N^2 -(N^2+N) = N^2-N$.  This is $\ge 2$ for $N\ge 2$.

 The manifold $\GL_N$ is an open dense subset of $\M_N$, and the generator $\Delta_{r,s}^N$ is easily seen to be a non-degenerate elliptic operator on $C^\infty(\M_N)$.  Thus, by the main theorem of \cite{Rama1998}, $\M_N^{\nor}$ is a polar set for the diffusion generated by $\frac12\Delta^N_{r,s}$; i.e.\ the hitting time of $\M_N^{\nor}$ for $(B^N_{r,s}(t))_{t\ge 0}$ is $+\infty$ almost surely.  This concludes the proof.
\end{proof}

\begin{remark} If $D$ is in the open dense subset of $\mathbb{D}_N$ with all eigenvalues distinct, then the stabilizer of $D$ in $\U_N$ is exactly equal to $\mathbb{T}_N$; thus the map $\widetilde{\Phi}$ above is generically a local diffeomorphism.  It follows that $\mathrm{dim}_\R(\M_N^{\nor}) = N^2+N$. \end{remark}

Propositions \ref{p.B1} -- \ref{p.B3} address the first half of Theorem \ref{t.brst}.  Let us also address Remark \ref{r.B=U} here.

\begin{proposition} \label{p.B4} For $r> 0$, $V^N(t) \equiv B^N_{r,0}(t/r)$ is Brownian motion on $\U_N$ with respect to the metric induced by the inner product $\langle \xi,\eta\rangle = -N\Tr(\xi\eta)$ on $\u_N$.  \end{proposition}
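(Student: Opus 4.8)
The plan is to show that $B^N_{r,0}(t)$ --- the solution of the mSDE (\ref{e.mSDE-B}) with $s=0$, in the sense of Remark \ref{r.B=U} --- first takes values in $\U_N$, and second, after rescaling time by $1/r$, satisfies the It\^o SDE (\ref{e.B(t)Ito}) characterizing Brownian motion on $\U_N$ for the metric induced by $\langle\xi,\eta\rangle=-N\Tr(\xi\eta)$ on $\u_N$. The key simplification is that with $s=0$ the driving process in (\ref{e.mSDE-B}) is $W^N_{r,0}(t)=\sqrt r\,iX^N(t)$ for a single $\mathrm{GUE}_N$ Brownian motion $X^N$, so that $dW^N_{r,0}(t)^\ast=-dW^N_{r,0}(t)$.

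First I would check unitarity. Applying the It\^o product rule (\ref{e.mItoprod}) to $BB^\ast$, with $B=B^N_{r,0}(t)$ and $W=W^N_{r,0}(t)$: the martingale contributions $B\,dW\,B^\ast$ (from $dB\cdot B^\ast$) and $B\,dW^\ast\,B^\ast=-B\,dW\,B^\ast$ (from $B\cdot dB^\ast$) cancel; by (\ref{e.mIto4}) the only surviving part of $dB\cdot dB^\ast$ is $B\,dW\,dW^\ast\,B^\ast$, and $dW\,dW^\ast=r\,dX^N\,dX^N=r\,\tr(I_N)I_N\,dt=r\,I_N\,dt$ by (\ref{e.mIto2}); this $+r\,BB^\ast\,dt$ cancels the two drift terms $-\frac12 r\,BB^\ast\,dt$ coming from $dB\cdot B^\ast$ and $B\cdot dB^\ast$. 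Hence $d(BB^\ast)=0$, and since $B^N_{r,0}(0)=I_N$ this forces $B^N_{r,0}(t)\in\U_N$ for all $t$.

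Next I would perform the time change. Set $\tilde X^N(t):=\sqrt r\,X^N(t/r)$, which is again a $\mathrm{GUE}_N$ Brownian motion (entries i.i.d.\ real Brownian motions of variance $r\cdot(t/r)/N=t/N$), and write $V^N(\sigma)=B^N_{r,0}(\sigma/r)$. Substituting $u=\sigma/r$ in the integral form of (\ref{e.mSDE-B}), the finite-variation integral $\frac12 r\int_0^{t/r}B^N_{r,0}(u)\,du$ becomes $\frac12\int_0^t V^N(\sigma)\,d\sigma$, and the scaling of It\^o integrals under a deterministic time change turns $\int_0^{t/r}B^N_{r,0}(u)\,\sqrt r\,i\,dX^N(u)$ into $i\int_0^t V^N(\sigma)\,d\tilde X^N(\sigma)$. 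So $V^N$ solves $dV^N(t)=V^N(t)\,i\,d\tilde X^N(t)-\frac12 V^N(t)\,dt$.

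To conclude, I would identify this with (\ref{e.B(t)Ito}) for $G=\U_N$: by the magic formula (\ref{e.magic}), $\sum_{\xi\in\beta_N}\xi^2=-I_N$, so Brownian motion on $\U_N$ for the inner product $-N\Tr(\xi\eta)$ solves $dU(t)=U(t)\,dW^{\u_N}(t)-\frac12 U(t)\,dt$, where $W^{\u_N}(t)=\sum_{\xi\in\beta_N}W_\xi(t)\xi$; and by the identity recorded just after (\ref{e.beta_N}) in its $(r,s)=(1,0)$ case, $i\tilde X^N(t)=\sum_{\xi\in\beta_N}\tilde W_\xi(t)\xi$ for i.i.d.\ standard real Brownian motions $\tilde W_\xi$, so $i\tilde X^N$ has the law of $W^{\u_N}$. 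Uniqueness of solutions to (\ref{e.B(t)Ito}) then gives that $V^N(t)=B^N_{r,0}(t/r)$ is Brownian motion on $\U_N$ for this metric. The one step I expect to require genuine care is the stochastic-integral bookkeeping in the time change: one must verify that the $\sqrt r$ in $W^N_{r,0}=\sqrt r\,iX^N$, the $\sqrt r$ built into $\tilde X^N(t)=\sqrt r\,X^N(t/r)$, and the Jacobian of $u\mapsto ru$ combine to leave the factor $1$ on the martingale term and $1/r$ on the drift term --- which is exactly what makes $V^N$ solve the unitary It\^o SDE with the correct coefficient $-\frac12$. Everything else is a direct application of Lemma \ref{l.mIto} and the magic formula.
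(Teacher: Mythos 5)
Your proposal is correct and follows essentially the same route as the paper: both identify the It\^o mSDE for $\U_N$-Brownian motion via the magic formula $\sum_{\xi\in\beta_N}\xi^2 = -I_N$, and both reduce the statement to a Brownian space-time scaling, $\sqrt{r}\,dX^N(t)=dX^N(rt)$ (equivalently your $\tilde X^N(t)=\sqrt r\,X^N(t/r)$). The one place you add detail is the direct verification that $d(B^N_{r,0}(t)B^N_{r,0}(t)^\ast)=0$; the paper defers this to the same computation as in Proposition~\ref{p.B1}, where for $s=0$ the inverse process $A^N_{r,0}(t)$ solves precisely the adjoint SDE $dB^\ast=-dW\,B^\ast-\frac{r}{2}B^\ast\,dt$, so $B^{-1}=B^\ast$. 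Either phrasing is fine; your version is slightly more self-contained.
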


\begin{proof} Let $\beta_N$ be the basis for $\u_N$ defined in (\ref{e.beta_N}); then $\beta_N$ is orthonormal for the stated inner product.  From (\ref{e.B(t)Ito}) and (\ref{e.magic}), we see that, with $W^N(t) = \sum_{\xi\in\beta_N} B_\xi(t)\,\xi$, the Brownian motion $U^N(t)$ on $\U_N$ satisfies the mSDE
\[ dU^N(t) = U^N(t)\,dW^N(t) -\frac12 U^N(t)\,dt, \qquad U^N(0) = I_N. \]
(Note: the proof that this process takes values in $\U_N$ for all $t\ge 0$ follows much the same way as the proof of Proposition \ref{p.B1}.)  Note, as above, that $W^N(t) = iX^N(t)$ where $X^N(t)$ is a $\mathrm{GUE}_N$ Brownian motion.  Now, from (\ref{e.mSDE-B}), we compute that, for $r>0$,
\begin{align*} dV^N(rt) = dB^N_{r,0}(t) = \sqrt{r}\,i B^N_{r,0}(t)\,dX^N(t) - \frac12 r B^N_{r,0}(t)\,dt &= iB^N_{r,0}(t) dX^N(rt)-\frac12B^N_{r,0}\,d(rt) \\
&= i V^N(rt)\,dX^N(rt)-\frac12V^N(rt)\,d(rt), \end{align*}
using the standard space-time scaling of the Brownian motion $X^N(t)$ and the chain rule.  Thus $V^N(t)$ satisfies the same mSDE, with the same initial condition, as $U^N(t)$; this proves the proposition.
\end{proof}

\subsection{Properties of $(b_{r,s}(t))_{t\ge 0}$}

\begin{proposition} \label{p.b1} For all $r,s,t\ge 0$, the free multiplicative $(r,s)$-Brownian motion $b_{r,s}(t)$ is invertible; the inverse $a_{r,s}(t) = b_{r,s}(t)^{-1}$ satisfies the fSDE
\begin{equation} \label{e.binvSDE} da_{r,s}(t) = -dw_{r,s}(t)\,a_{r,s}(t)-\frac12(r-s)\,a_{r,s}(t)\,dt. \end{equation}
\end{proposition}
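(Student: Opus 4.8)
The plan is to follow the proof of Proposition~\ref{p.B1} essentially verbatim, with the matrix It\^o calculus of Lemma~\ref{l.mIto} replaced by the free It\^o calculus of Lemma~\ref{l.fIto}, and with one extra step at the end to upgrade a one-sided inverse to a two-sided one. Concretely, I would let $a = a_{r,s}(t)$ be the unique adapted solution of (\ref{e.binvSDE}) with $a_{r,s}(0)=1$; existence and uniqueness follow from the Picard argument around (\ref{e.fSIE})--(\ref{e.fSDE}), once one writes $dw = i\sqrt{r}\,dx+\sqrt{s}\,dy$, which puts (\ref{e.binvSDE}) into the standard polynomial-coefficient form of an fSDE.

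Writing $b=b_{r,s}(t)$ and $w=w_{r,s}(t)$, I would apply the free It\^o product rule (\ref{e.fItoprod}) to $ba$:
\[ d(ba) = (db)\,a + b\,(da) + (db)(da). \]
Substituting (\ref{e.fSDE0}) and (\ref{e.binvSDE}): the martingale parts are $b\,dw\,a$ and $-b\,dw\,a$, which cancel; the two drift contributions sum to $-(r-s)ba\,dt$; and by (\ref{e.fIto1})--(\ref{e.fIto4}) the only surviving quadratic term is $(b\,dw)(-dw\,a) = -b\,(dw\,dw)\,a$, where $dw\,dw = (s-r)\,dt$ by Lemma~\ref{l.dwdw*} (with $\theta=1$ and $\ex=\ex'=1$). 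This is the free analogue of the identity $(dW)^2 = (s-r)I_N\,dt$ used in the proof of Proposition~\ref{p.B1}. Hence $(db)(da) = (r-s)ba\,dt$, which cancels the drift, so $d(ba)=0$; since $b(0)a(0)=1$, this gives $b_{r,s}(t)\,a_{r,s}(t) = 1$ for all $t\ge 0$.

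The only genuinely new point is passing from $ba=1$ to $a=b^{-1}$: unlike in the matrix case, one cannot symmetrically compute $d(ab)=0$, since the martingale part $ab\,dw-dw\,ab$ of $d(ab)$ need not vanish a priori. Instead I would invoke that $(\A,\t)$ is a faithful tracial $W^\ast$-probability space, hence a finite von Neumann algebra, in which every right inverse is a two-sided inverse. Explicitly: from $ba=1$, the idempotent $e=ab$ satisfies $\t(e)=\t(ab)=\t(ba)=\t(1)=1$ by traciality, so its range projection $q$ satisfies $\t(q)=\t(e)=1$, forcing $q=1$ by faithfulness, whence $1-e$ has trivial range and $ab=1$. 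Thus $a_{r,s}(t)=b_{r,s}(t)^{-1}\in\A$, which in particular proves invertibility for all $t\ge 0$. I expect no real obstacle beyond keeping the sign in $dw\,dw=(s-r)\,dt$ straight and remembering to use this finiteness argument in place of the matrix-rank argument.
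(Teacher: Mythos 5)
Your proof is correct, and the first half — showing $d(ba)=0$ via the free It\^o product rule so that $b_{r,s}(t)a_{r,s}(t)=1$ — is exactly what the paper does. Where you diverge is in upgrading the one-sided inverse to a two-sided one. The paper stays within the stochastic calculus: it explicitly computes
\[ d(a_tb_t) = [a_tb_t,dw_t] + (r-s)\bigl[a_tb_t - \t(a_tb_t)\bigr]\,dt, \]
observes that the constant process $\theta_t\equiv 1$ satisfies the same fSDE with the same initial condition, and concludes $a_tb_t\equiv 1$ by uniqueness of solutions. You instead invoke the Dedekind-finiteness of a finite von Neumann algebra: $e=ab$ is idempotent with $\t(e)=\t(ba)=1$ by traciality, and since $\t$ is faithful this forces $e=1$. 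Both routes are valid; yours is shorter and more structural, trading the second It\^o computation for a standard fact about tracial $W^\ast$-algebras, while the paper's version is self-contained within the free stochastic calculus machinery and does not lean on the faithfulness hypothesis until the very end (though it is available, since $(\A,\t)$ is assumed faithful and tracial throughout). One small point worth appreciating in the paper's approach: the uniqueness argument it uses requires interpreting the drift $\t(\theta_t)$ — a trace functional rather than a polynomial — as an admissible coefficient in the Picard scheme, which is a mild extension of the fSDE setup in \eqref{e.fSDE}; your route avoids that technicality altogether.
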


\begin{proof} The proof proceeds very similarly to the proof of Proposition \ref{p.B1}: using (\ref{e.fIto2}) -- (\ref{e.fIto4}) instead of (\ref{e.mIto2}) -- (\ref{e.mIto4}), we compute that $d(b_{r,s}(t)a_{r,s}(t)) = 0$, which shows, since $b_{r,s}(0) = a_{r,s}(0) = 1$, that $b_{r,s}(t)a_{r,s}(t)=1$.  In this infinite-dimensional setting, we must also verify that $a_{r,s}(t)b_{r,s}(t)=1$.  To that end, to simplify notation, let $a_t=a_{r,s}(t)$, $b_t=b_{r,s}(t)$, and $w_t=w_{r,s}(t)$.  Then we have
\begin{align*} d(a_tb_t) &= da_t\cdot b_t + a_t\cdot db_t + da_t\cdot db_t \\
&= -dw_t\,a_tb_t - \frac12(r-s)a_tb_t\,dt + a_tb_t\,dw_t - \frac12(r-s)a_tb_t\,dt - dw_t\,a_tb_t\,dw_t \\
&= [a_tb_t,dw_t] - (r-s)a_tb_t\,dt - dw_t\,a_tb_t\,dw_t.
\end{align*}
From Lemma \ref{l.dwdw*},
\[ dw_t\,a_tb_t\,dw_t  = (s-r)\t(a_tb_t). \]
Thus, $a_tb_t$ satisfies the fSDE
\[ d(a_tb_t) = [a_tb_t,dw_t] +(r-s)[a_tb_t-\t(a_tb_t)], \]
with initial condition $a_0b_0=1$.  Notice that the fSDE
$d\theta_t = [\theta_t,dw_t] + (r-s)[\theta_t-\t(\theta_t)]$ holds true for any constant process $\theta_t$; thus, with initial condition $\theta_0=1$ uniquely determining the solution, we see that $a_tb_t=1$ as well.  \end{proof}

\begin{proposition} \label{p.b2} The multiplicative increments of $(b_{r,s}(t))_{t\ge 0}$ are freely independent and stationary. \end{proposition}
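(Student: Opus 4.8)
The plan is to follow the template of the proof of Proposition~\ref{p.B2}, with the matrix It\^o calculus of Lemma~\ref{l.mIto} replaced by the free It\^o calculus of Lemma~\ref{l.fIto}. Fix $0\le t_1<t_2$, write $\A_{t_1} = W^\ast\{x(u),y(u)\colon 0\le u\le t_1\}$, and recall from Proposition~\ref{p.b1} that $b_{r,s}(t_1)$ is invertible with two-sided inverse $a_{r,s}(t_1)\in\A_{t_1}$. Integrating the fSDE~(\ref{e.fSDE0}) from $t_1$ to $t$ and multiplying on the left by the (time-$t$-constant) operator $a_{r,s}(t_1)$, using $a_{r,s}(t_1)b_{r,s}(t_1)=1$, I obtain that the adapted process $c(t) \equiv a_{r,s}(t_1)\,b_{r,s}(t)$, $t\ge t_1$, satisfies
\[ c(t) = 1 + \int_{t_1}^t c(u)\,dw_{r,s}(u) - \tfrac12(r-s)\int_{t_1}^t c(u)\,du, \]
i.e.\ $dc(t) = c(t)\,d\big(w_{r,s}(t)-w_{r,s}(t_1)\big) - \tfrac12(r-s)c(t)\,dt$ with $c(t_1)=1$, since adding the constant $w_{r,s}(t_1)$ does not affect $d$. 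Reparametrizing $\hat c(u) = c(t_1+u)$ and setting $\hat x(u) = x(t_1+u)-x(t_1)$, $\hat y(u) = y(t_1+u)-y(t_1)$, $\hat w(u) = i\sqrt r\,\hat x(u) + \sqrt s\,\hat y(u)$, the process $\hat c$ solves exactly the fSDE~(\ref{e.fSDE0}) driven by $(\hat x,\hat y)$ in place of $(x,y)$, with $\hat c(0)=1$.

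Now $\hat x$ and $\hat y$ are again two freely independent free semicircular Brownian motions --- this is the defining stationarity and freeness-of-increments property of $x$ and $y$, together with their mutual freeness. Since the fSDE~(\ref{e.fSDE0}) has a unique solution (the Picard argument following~(\ref{e.fSIE})), that solution is a fixed noncommutative functional of the driving pair (an $L^2$-limit of noncommutative polynomials in iterated free integrals), so it has the same noncommutative distribution whenever the driving pair does. As $(\hat x,\hat y)$ has the same joint distribution as $(x,y)$, the process $\hat c$ has the same noncommutative distribution as $b_{r,s}$; in particular $b_{r,s}(t_1)^{-1}b_{r,s}(t_2) = \hat c(t_2-t_1)$ has the distribution of $b_{r,s}(t_2-t_1)$. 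This establishes stationarity of the multiplicative increments.

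For free independence, the key observation is that each increment $b_{r,s}(t_1)^{-1}b_{r,s}(t_2) = \hat c(t_2-t_1)$ lies in the $W^\ast$-subalgebra $\A_{[t_1,t_2]} \equiv W^\ast\{x(t)-x(t_1),\,y(t)-y(t_1)\colon t_1\le t\le t_2\}$. Indeed, the Picard iterates $\hat c^{(0)}\equiv 1$, $\hat c^{(k+1)}(u) = 1 + \int_0^u \hat c^{(k)}(v)\,d\hat w(v) - \tfrac12(r-s)\int_0^u\hat c^{(k)}(v)\,dv$ all lie in $\A_{[t_1,t_2]}$ by induction, since free It\^o integrals and Bochner integrals of $\A_{[t_1,t_2]}$-valued adapted processes stay in the ($L^2$-closed) algebra $\A_{[t_1,t_2]}$; hence so does the limit $\hat c$. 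Given any $0 = s_0 \le s_1 \le \cdots \le s_m$, the subalgebras $\A_{[s_{j-1},s_j]}$, $j=1,\ldots,m$, are freely independent, because the increments over disjoint time intervals of the jointly-free pair $(x,y)$ are jointly free. It follows that the multiplicative increments $b_{r,s}(s_{j-1})^{-1}b_{r,s}(s_j)\in\A_{[s_{j-1},s_j]}$ are freely independent, and in particular $b_{r,s}(t_1)^{-1}b_{r,s}(t_2)$ is freely independent of $\A_{t_1}\supseteq W^\ast\{b_{r,s}(s)\colon s\le t_1\}$.

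The only step I expect to require genuine care is the last one: verifying that the increments of a \emph{jointly} free pair of free semicircular Brownian motions, taken over \emph{disjoint} time intervals, are jointly free (so that the $\A_{[s_{j-1},s_j]}$ are free). The definition in Section~\ref{section free stochastic calculus} states this only one coordinate and one increment at a time, so one must combine it with the mutual freeness of $x$ and $y$ and the associativity of freeness (Definition~\ref{d.free}). This is the precise free analogue of the independence-of-increments fact used in the proof of Proposition~\ref{p.B2}, and it requires no free stochastic calculus; the remainder of the argument is the mechanical translation of that matrix proof.
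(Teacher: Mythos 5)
Your proposal is correct and takes essentially the same approach as the paper, whose proof is a one-sentence instruction to repeat Proposition~\ref{p.B2} with $\sigma$-fields replaced by the von Neumann algebras $\A_t$. You have in effect supplied the details that the paper suppresses---in particular, the localization $\hat c(t_2-t_1)\in\A_{[t_1,t_2]}$ via Picard iterates and the use of joint freeness of increments of $(x,y)$ over disjoint intervals---and the caveat you flag at the end is indeed the one nontrivial ingredient the paper is implicitly invoking.
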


\noindent The proof of Proposition \ref{p.b2} is virtually identical to the proof of Proposition \ref{p.B2}; one need only replace the $\sigma$-fields $\mathscr{F}_t$ with the von Neumann algebras $\A_t = W^\ast\{x(t'),y(t')\colon 0\le t'\le t\}$.

\begin{proposition} \label{p.b3} For $r\ge 0$ and $s>0$, $b_{r,s}(t)$ is non-normal for all $t>0$.  \end{proposition}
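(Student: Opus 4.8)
The plan is to detect non-normality through a single scalar built out of the low $\ast$-moments of $b = b_{r,s}(t)$, taking advantage of faithfulness of $\t$. Since $b$ lies in the von Neumann algebra $\A$ it is a bounded operator, so $h := b^\ast b - bb^\ast$ is a bounded self-adjoint element; because $\t$ is a faithful trace, $h = 0$ if and only if $\t(h^2) = 0$. Thus $b$ is normal if and only if $\t\big((b^\ast b - bb^\ast)^2\big) = 0$, and the whole proposition reduces to showing that this quantity is strictly positive whenever $t>0$, $s>0$, $r\ge 0$. (The finite-$N$ argument of Proposition \ref{p.B3}, based on polar sets for elliptic diffusions, has no direct analogue here, which is why I would go the moment-theoretic route instead.)

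First I would expand $\t(h^2)$ and collapse it using cyclicity of $\t$:
\[ \t\big((b^\ast b - bb^\ast)^2\big) = \t(b^\ast b b^\ast b) - \t(b^\ast b\, bb^\ast) - \t(bb^\ast\, b^\ast b) + \t(bb^\ast bb^\ast). \]
Cyclic invariance gives $\t(b^\ast b b^\ast b) = \t(bb^\ast bb^\ast) = \t\big((bb^\ast)^2\big)$, and $\t(b^\ast b\, bb^\ast) = \t(bb^\ast\, b^\ast b) = \t(b^2 b^{\ast 2})$, so that
\[ \t\big((b^\ast b - bb^\ast)^2\big) = 2\,\t\big((bb^\ast)^2\big) - 2\,\t\big(b^2 b^{\ast 2}\big). \]
This is exactly the combination of second-order $\ast$-moments already computed in Theorem \ref{t.moments}: equation (\ref{e.m2}) gives $\t\big((bb^\ast)^2\big) = \nu_2(-4st)$, and equation (\ref{e.m3}) gives $\t\big(b^2 b^{\ast 2}\big) = e^{4st} + 4st(1+st)e^{(3s-r)t}$.

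Next I would evaluate $\nu_2$ explicitly from (\ref{e.nuNu0}): for $n=2$ the sum has only the terms $k=0,1$, and one gets $\nu_2(x) = e^{-x}(1-x)$, hence $\nu_2(-4st) = e^{4st}(1+4st)$. Substituting and simplifying,
\[ \t\big((b^\ast b - bb^\ast)^2\big) = 2\Big[e^{4st}(1+4st) - e^{4st} - 4st(1+st)e^{(3s-r)t}\Big] = 8st\,e^{(3s-r)t}\Big(e^{(s+r)t} - 1 - st\Big). \]
Finally, for $t>0$ and $s>0$ we have $8st>0$ and $e^{(3s-r)t}>0$; and since $r\ge 0$ and $st>0$, the elementary inequality $e^u > 1+u$ for $u>0$ gives $e^{(s+r)t}\ge e^{st} > 1+st$. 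Therefore $\t\big((b^\ast b - bb^\ast)^2\big) > 0$, so $b^\ast b \ne bb^\ast$ and $b$ is not normal.

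The computation is essentially routine; the only point requiring care is the cyclic bookkeeping reducing the four terms of $\t(h^2)$ to the two moments supplied by Theorem \ref{t.moments}, together with the (already implicit) fact that $b$ is bounded so that faithfulness of $\t$ applies. As a bonus, the factor $8st$ in the final identity makes transparent why the statement is sharp: $\t(h^2)$ vanishes precisely when $s=0$ (where $u(t)$ is a free unitary Brownian motion, hence normal) or $t=0$.
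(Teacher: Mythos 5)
Your proof is correct and is essentially the paper's own argument: compute $\t\big([b,b^\ast]^2\big) = 2\t\big((bb^\ast)^2\big) - 2\t\big(b^2 b^{\ast 2}\big)$, substitute the values from Theorem \ref{t.moments} and (\ref{e.nuNu0}), and conclude positivity from $e^{u} > 1+u$ together with $r\ge 0$. The only cosmetic differences are the sign convention for the commutator (immaterial after squaring) and how you factor the final expression; making the role of faithfulness of $\t$ explicit, as you do, is a helpful addition.
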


\begin{proof} Let $b_t = b_{r,s}(t)$; we compute that
\[ [b_t,b_t^\ast]^2 = (b_tb_t^\ast)^2 - b_t(b_t^\ast)^2b_t-b_t^\ast b_t^2 b_t^\ast + (b_t^\ast b_t)^2, \]
and so
\[ \t\left([b_t,b_t^\ast]^2\right) = 2\t[(b_tb_t^\ast)^2]-2\t[b_t^2(b_t^\ast)^2]. \]
We now use (\ref{e.nuNu0}), (\ref{e.m2}), and (\ref{e.m3}) to expand this:
\begin{align*} \t[(b_tb_t^\ast)^2]-\t[b_t^2(b_t^\ast)^2] &= \nu_2(-4st) - (e^{4st}+4st(1+st)e^{(3s-r)t}) \\
&= e^{4st}(1+4st) - (e^{4st}+4st(1+st)e^{(3s-r)t}) \\
&= 4st e^{3st}[e^{st}-(1+st)e^{-rt}].
\end{align*}
Since $r\ge 0$, $e^{-rt}\le 1$, and since $s>0$, $e^{st}>1+st$.  It follows that $\t([b_t,b_t^\ast]^2)>0$ for $t>0$, proving that $b_t$ is not normal.
\end{proof}

\begin{proposition} \label{p.b4} For $r>0$, $u(t) = b_{r,0}(t/r)$ is a free unitary Brownian motion. \end{proposition}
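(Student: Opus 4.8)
The plan is to show that the time-rescaled process $u(t)=b_{r,0}(t/r)$ satisfies the defining free SDE \eqref{e.fubm} of the free unitary Brownian motion, whence the claim follows from uniqueness of solutions. First I would specialize \eqref{e.fSDE0} to the case $s=0$: by \eqref{e.w} we have $w_{r,0}(t)=i\sqrt{r}\,x(t)$, so $b\equiv b_{r,0}$ obeys
\[ db(t)=i\sqrt{r}\,b(t)\,dx(t)-\tfrac12 r\,b(t)\,dt,\qquad b(0)=1. \]
The core of the argument is then a deterministic time change. I would set $\tilde x(t)=\sqrt{r}\,x(t/r)$ and first verify that $\tilde x$ is again a free semicircular Brownian motion: a deterministic reparametrization of the time axis preserves $\tilde x(0)=0$, the stationarity of the increments, and their free independence (the increment algebras are just a reparametrized filtration), while $\Var(\tilde x(1))=r\,\Var(x(1/r))=r\cdot\tfrac1r=1$; thus $\tilde x$ has exactly the defining properties recalled in Section~\ref{section free stochastic calculus}.

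Next I would substitute $\tau=t/r$ in the displayed fSDE for $b$. Using $d\tilde x(t)=\sqrt{r}\,dx(t/r)$ and $d(t/r)=dt/r$, this yields
\[ du(t)=db(t/r)=i\sqrt{r}\,u(t)\,dx(t/r)-\tfrac12 r\,u(t)\,\tfrac{dt}{r}=i\,u(t)\,d\tilde x(t)-\tfrac12 u(t)\,dt,\qquad u(0)=b(0)=1, \]
which is precisely \eqref{e.fubm} with $\tilde x$ in the role of the driving free semicircular Brownian motion. By the uniqueness of adapted solutions to free SDEs with polynomial coefficients (Section~\ref{section free stochastic calculus}) and the very definition of the free unitary Brownian motion as the solution of \eqref{e.fubm}, it follows that $u(t)$ is a free unitary Brownian motion. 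The unitarity of $u(t)$ — the remaining part of Theorem~\ref{t.brst} for $s=0$ — can be extracted along the way: by Proposition~\ref{p.b1} the two-sided inverse $a_{r,0}(t)=b_{r,0}(t)^{-1}$ solves $da_{r,0}(t)=-dw_{r,0}(t)\,a_{r,0}(t)-\tfrac12 r\,a_{r,0}(t)\,dt$, and since $x$ has self-adjoint increments we have $dw_{r,0}^\ast=-i\sqrt{r}\,dx=-dw_{r,0}$ precisely when $s=0$; taking adjoints in the fSDE for $b_{r,0}$ then shows that $b_{r,0}(t)^\ast$ solves the same equation as $a_{r,0}(t)$ with the same initial value, so $b_{r,0}(t)^\ast=b_{r,0}(t)^{-1}$.

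The only step requiring genuine care is the justification of the deterministic time change inside the free Itô integral, i.e.\ the identity $\int_0^{t/r}\theta(\tau)\,dx(\tau)=\tfrac{1}{\sqrt r}\int_0^t\theta(s/r)\,d\tilde x(s)$ for adapted integrands $\theta$. This is immediate from the construction of the free Itô integral as an $L^2(\A,\t)$-limit of Riemann-type partition sums: the approximating sums agree term by term once the partitions are reparametrized, exactly as for the classical scalar Itô integral. Everything else in the argument is routine bookkeeping with \eqref{e.fSDE0} and the scaling of semicircular Brownian motion.
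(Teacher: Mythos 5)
Your argument is correct, and it is essentially a carefully fleshed-out version of the ``alternative'' proof that the paper only sketches: specialize \eqref{e.fSDE0} to $s=0$, observe the resulting equation is the fSDE \eqref{e.fubm} for free unitary Brownian motion up to a linear time change, and justify the time change by noting $\tilde x(t)=\sqrt{r}\,x(t/r)$ is again a free semicircular Brownian motion (preservation of $\tilde x(0)=0$, free/stationary increments, and $\Var(\tilde x(1))=1$). The paper's \emph{primary} proof is considerably shorter in spirit: it simply invokes Proposition~\ref{p.B4} (the finite-$N$ statement that $B^N_{r,0}(t/r)$ is Brownian motion on $\U_N$) together with Biane's convergence theorem \cite[Theorem~1]{Biane1997c}, which identifies its large-$N$ limit as free unitary Brownian motion. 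That route buys brevity by outsourcing the analytic work to Biane, at the cost of being indirect (it passes through the matrix model rather than working intrinsically in the free probability space). Your intrinsic fSDE argument buys self-containedness and in fact yields more along the way — in particular your observation that for $s=0$ one has $dw_{r,0}^\ast=-dw_{r,0}$, so that $b_{r,0}(t)^\ast$ and $b_{r,0}(t)^{-1}$ solve the same fSDE with the same initial condition, hence $b_{r,0}(t)$ is unitary — which is a piece of Theorem~\ref{t.brst} the paper handles separately. One small thing worth making explicit in your time-change step: when you write $du(t)=i\,u(t)\,d\tilde x(t)-\tfrac12 u(t)\,dt$ and then invoke uniqueness, the filtration to which $u$ is adapted is $\tilde{\A}_t := \A_{t/r}$, and this is precisely the natural filtration of $\tilde x$; so $u$ is an adapted solution of \eqref{e.fubm} driven by $\tilde x$, and the uniqueness of such solutions gives the conclusion. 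You gesture at this (``the increment algebras are just a reparametrized filtration''), but stating the identification $\tilde{\A}_t=W^\ast\{\tilde x(s):0\le s\le t\}$ closes the loop cleanly.
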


\begin{proof} This follows immediately from Proposition \ref{p.B4}, together with \cite[Theorem 1]{Biane1997c}.  Alternatively, we can see directly that (\ref{e.fSDE0}) reduces to $du(t) = i\,dx(t)-\frac12u(t)\,dt$ for $u(t) = b_{1,0}(t)$, which is the defining SDE of a (left) free unitary Brownian motion, and then do a time change computation as in the proof of Proposition \ref{p.B4} for $b_{r,0}(t/r)$.  \end{proof}

\section{Convergence of the Brownian Motions \label{section Convergence of the Brownian Motions}}

This final section is devoted to the proof of Theorem \ref{t.main}: that the process $(B^N_{r,s}(t))_{t\ge 0}$ converges in noncommutative distribution to the process $(b_{r,s}(t))_{t\ge 0}$.  We first show the convergence of the random matrices $B^N_{r,s}(t)$ for each fixed $t\ge 0$; the multi-time statement then follows from asymptotic freeness considerations.

\subsection{Convergence for a Fixed $t$}

We begin by noting the single-$t$ version of Theorem \ref{t.multi-cov}, which was proved in \cite[Proposition 4.13]{Kemp2013a}.  For any $r,s>0$ and $t\ge 0$, and any noncommutative polynomials $f,g\in\C\langle X,X^\ast\rangle$, there is a constant $C_{r,s}(t,f,g)$ such that
\begin{equation} \label{e.var1} \Cov\left[\tr\!\left(f(B^N_{r,s}(t),B^N_{r,s}(t)^\ast)\right),\tr\!\left(g(B^N_{r,s}(t),B^N_{r,s}(t)^\ast)\right)\right] \le \frac{C_{r,s}(t,f,g)}{N^2},  \end{equation} 
where $C_{r,s}(t,f,g)$ depends continuously on $t$.

We now proceed to prove the fixed-$t$ case of Theorem \ref{t.main}.  The idea is to compare the mSDE for $B^N_{r,s}(t)$ to the fSDE for $b_{r,s}(t)$, and inductively show that traces of $\ast$-moments differ by $O(1/N^2$), using (\ref{e.var1}).

\begin{theorem} \label{t.main-fixed-t} Let $r,s,t\ge 0$.  Let $n\in\N$ and let $\ex = (\ex_1,\ldots,\ex_n)\in\{1,\ast\}^n$.  Then there is a constant $C'_{r,s}(t,\ex)$ that depends continuously on $r,s,t$ so that
\begin{equation} \label{e.conv-t} \left|\E\tr\!\left(B^N_{r,s}(t)^{\ex_1}\cdots B^N_{r,s}(t)^{\ex_n}\right)- \t(b_{r,s}(t)^{\ex_1}\cdots b_{r,s}(t)^{\ex_n})\right|\le \frac{C'_{r,s}(t,\ex)}{N^2}. \end{equation}
\end{theorem}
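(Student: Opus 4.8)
The plan is to compare the $\ast$-moments of $B^N_{r,s}(t)$ and $b_{r,s}(t)$ through coupled ordinary differential equations obtained from the two It\^o calculi. Fix a word $\ex=(\ex_1,\ldots,\ex_n)$ and abbreviate $B=B^N_{r,s}(t)$, $b=b_{r,s}(t)$; set $u^N_\ex(t)=\E\tr(B^{\ex_1}\cdots B^{\ex_n})$ and $u_\ex(t)=\t(b^{\ex_1}\cdots b^{\ex_n})$. First I would record that both $t\mapsto B$ and $t\mapsto B^\ast$ are matrix It\^o processes driven by $X^N,Y^N$ — the adjoint of \eqref{e.mSDE-B} reads $dB^\ast=dW^{N\ast}_{r,s}\,B^\ast-\tfrac12(r-s)B^\ast\,dt$, and $dW^{N\ast}_{r,s}=-\sqrt r\,i\,dX^N+\sqrt s\,dY^N$ still obeys Lemma \ref{l.mIto} — so the $n$-fold product may be differentiated by iterating the product rule \eqref{e.mItoprod}. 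Taking the trace and then the expectation, the first-order ($dW$) terms vanish by \eqref{e.mIto1}; the $n$ first-order $dt$ terms contribute $-\tfrac n2(r-s)u^N_\ex$; and each pairing of two of the $n$ factors is reduced, via \eqref{e.mIto2}--\eqref{e.mIto4} exactly as in Lemma \ref{l.dwdw*}, to a term $(s\pm r)\,\E[\tr(W_1)\tr(W_2)]$ in which the chain between the two differentiated positions has been contracted into a separate trace; here $W_1,W_2$ are sub-words of $\ex$ with $|W_1|+|W_2|=n$. This gives a closed ODE for $u^N_\ex$ with coefficients polynomial in $(r,s)$ (the square roots occur only squared) and with $u^N_\ex(0)=1$. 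Running the identical computation on the free side with Lemma \ref{l.fIto} in place of Lemma \ref{l.mIto} produces the same ODE with $\tr$ replaced by $\t$; but since $\t$ is scalar-valued the products there factor exactly, so $u_\ex$ solves $\tfrac{d}{dt}u_\ex=-\tfrac n2(r-s)u_\ex+\sum_{\text{pairings}}(s\pm r)\,u_{W_1}u_{W_2}$, with $u_\ex(0)=1$.

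Next I would set $D^N_\ex(t)=u^N_\ex(t)-u_\ex(t)$ and subtract the two ODEs, using the decomposition
\[
\E[\tr(W_1)\tr(W_2)]-\t(w_1)\t(w_2)=D^N_{W_1}\,\E\tr(W_2)+\t(w_1)\,D^N_{W_2}+\big(\E[\tr(W_1)\tr(W_2)]-\E\tr(W_1)\,\E\tr(W_2)\big),
\]
together with the observation that the last bracket equals $\Cov(\tr(W_1),\tr(W_2^\ast))$, since $\tr(W_2)=\overline{\tr(W_2^\ast)}$ and $\overline{\E\tr(W_2^\ast)}=\E\tr(W_2)$; by the single-$t$ covariance bound \eqref{e.var1} this is $O(1/N^2)$ with a constant continuous in $t$ (and in $r,s$). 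Hence $D^N_\ex$ satisfies a linear ODE with $D^N_\ex(0)=0$ whose right-hand side is: a linear combination (with fixed, $(r,s)$-polynomial coefficients) of the $D^N_{\ex'}$ with $|\ex'|=n$ — coming from the self-term $-\tfrac n2(r-s)$ and from the ``degenerate'' pairings whose contracted outer trace is $\tr(I_N)=1$; plus terms linear in $D^N_{\ex'}$ with $|\ex'|<n$ multiplied by the bounded quantities $\E\tr(W_2)$ or $\t(w_1)$; plus an $O(1/N^2)$ remainder. The structural point making this work is that $|W_1|+|W_2|=n$ forces at most one of $W_1,W_2$ to have length $n$ (the other then being empty, contributing $\tr(I_N)=1$), so the length-$n$ subsystem is genuinely linear.

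I would then finish by induction on $n$. The cases $n=0,1$ are trivial ($D^N_\emptyset=0$, and $u^N_{(1)}=u_{(1)}=e^{-\frac12(r-s)t}$, so $D^N_{(1)}=0$). Assuming \eqref{e.conv-t} for all words of length $<n$, the length-$<n$ contributions to $\tfrac{d}{dt}D^N_\ex$ are $O(1/N^2)$ with a continuous constant, and the coefficients $\E\tr(W_2)$ with $|W_2|<n$ are bounded uniformly for $N\ge N_0$ (being within $O(1/N^2)$ of the bounded limits $\t(w_2)$, cf.\ Theorem \ref{t.limdist}). Therefore the vector $\vec D_N^{(n)}(t)=(D^N_\ex(t))_{|\ex|=n}$ solves a finite-dimensional linear system $\tfrac{d}{dt}\vec D_N^{(n)}=M(r,s)\vec D_N^{(n)}+\vec h_N(t)$ with $M$ fixed, $\vec D_N^{(n)}(0)=0$, and $\|\vec h_N(t)\|\le c_{r,s}(t)/N^2$ with $c_{r,s}$ continuous; variation of parameters gives $\|\vec D_N^{(n)}(t)\|\le\big(t\,e^{\|M(r,s)\|t}\sup_{[0,t]}c_{r,s}\big)/N^2$ for $N\ge N_0$, and enlarging the constant to absorb the finitely many $N<N_0$ yields \eqref{e.conv-t} with $C'_{r,s}(t,\ex)$ continuous in $(r,s,t)$.

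The part I expect to be the main obstacle is the combinatorial bookkeeping of the second-order It\^o terms: tracking which ``remnant letter'' from each of the two differentiated factors lands inside the contracted trace and which lands outside (this depends on whether that letter is $B$ or $B^\ast$, i.e.\ on whether $dB=B\,dW+\cdots$ or $dB^\ast=dW^\ast B^\ast+\cdots$), and confirming the identification $|W_1|+|W_2|=n$ that makes the length-$n$ subsystem linear. Once the ODE system is written down correctly, the rest — Gr\"onwall on a finite linear system, and the continuous dependence of $C'_{r,s}(t,\ex)$ on $(r,s,t)$, automatic since every coefficient is polynomial in $(r,s)$ and the input bound \eqref{e.var1} is continuous — is routine.
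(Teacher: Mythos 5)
Your proof is correct and follows essentially the same route as the paper's: differentiate the $\ast$-moment via the matrix and free It\^o product rules, observe that the second-order contractions produce shorter words except for a few degenerate pairings that reproduce the word itself, subtract the two resulting ODE systems, decompose the residual into a single-$t$ covariance (controlled by \eqref{e.var1}) plus two inductive differences, and induct on word length. The only cosmetic differences are that the paper first removes the drift by passing to $A^N_{r,s}(t)=e^{\frac12(r-s)t}B^N_{r,s}(t)$ and then solves the resulting scalar ODE with an integrating factor $e^{\kappa(\ex)(s+r)T}$, whereas you keep the drift and invoke Gr\"onwall on a finite linear system; and in fact the degenerate pairings always return the \emph{same} word $\ex$ (not merely a word of length $n$), so the length-$n$ subsystem is diagonal, making the paper's integrating-factor solution the natural simplification of your setup.
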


\begin{proof} In the case $n=0$, (\ref{e.conv-t}) holds true vacuously with $C'_{r,s}(t,\emptyset) = 0$.  When $n=1$, as computed in (\ref{e.m1}) we have $\t(b_{r,s}(t)^{\ex_1}) = \nu_1((r-s)t)$, and so (\ref{e.conv-t}) follows immediately from \cite[Theorem 1.3]{Kemp2013a}.  From here, we proceed by induction: assume that (\ref{e.conv-t}) has been verified up to, but not including, level $n$.

Fix $\ex=(\ex_1,\ldots,\ex_n)\in\{1,\ast\}^n$.  Let $A^N_{r,s}(t) = e^{\frac12(r-s)t}B^N_{r,s}(t)$, so that, following precisely the proof of Lemma \ref{l.da} but using (\ref{e.mItoprod}) instead of (\ref{e.fItoprod}), we have
\begin{equation} \label{e.dA0} dA^N_{r,s}(t) = A^N_{r,s}(t)\,dW^N_{r,s}(t). \end{equation}
For convenience, denote $A=A^N_{r,s}(t)$, and denote $A^\ex = A^{\ex_1}\cdots A^{\ex_n}$.  Then, using the It\^o product rule (\ref{e.mItoprod}), we have
\begin{align} \label{e.dA1} d(A^\ex) &= \sum_{j=1}^n A^{\ex_1}\cdots A^{\ex_{j-1}}\cdot dA^{\ex_j}\cdot A^{\ex_{j+1}}\cdots A^{\ex_n} \\ \label{e.dA2}
&\qquad + \sum_{1\le j<k\le n} A^{\ex_1}\cdots A^{\ex_{j-1}}\cdot dA^{\ex_j}\cdot A^{\ex_{j+1}}\cdots A^{\ex_{k-1}}\cdot dA^{\ex_k}\cdot A^{\ex_{k+1}}\cdots A^{\ex_n}. \end{align}
From (\ref{e.mIto3}) and (\ref{e.dA0}), the terms in (\ref{e.dA2}) become
\begin{align*} &A^{\ex_1}\cdots A^{\ex_{j-1}}\cdot dA^{\ex_j}\cdot A^{\ex_{j+1}}\cdots A^{\ex_{k-1}}\cdot dA^{\ex_k}\cdot A^{\ex_{k+1}}\cdots A^{\ex_n} \\
=\;\;& A^{\ex_1}\cdots A^{\ex_{j-1}}A^{\ex_j'}\,dW^{\ex_j}\,A^{\ex_j''}A^{\ex_{j+1}}\cdots A^{\ex_{k-1}} A^{\ex_k'}\,dW^{\ex_k}\,A^{\ex_k''}A^{\ex_{k+1}}\cdots A^{\ex_n}
\end{align*}
where $W = W_{r,s}^N(t)$, and $1'=1$, $1''=\ast'=0$, and $\ast''=\ast$.  As in Lemma \ref{l.dwdw*}, (\ref{e.mIto2}) -- (\ref{e.mIto4}) show that, for any adapted process $\Theta$,
\begin{equation} \label{e.dWdW*} dW^{\ex}\,\Theta\,dW^{\ex'} = (s\pm r)\,\tr(\Theta)\,dt\, \end{equation}
where the sign is $-$ if $\ex=\ex'$ and $+$ is $\ex\ne\ex'$.  Hence, the terms in (\ref{e.dA2}) become
\[ (s\pm r) \tr(A^{\ex_j''}A^{\ex_{j+1}}\cdots A^{\ex_{k-1}}A^{\ex_k'}) A^{\ex_1}\cdots A^{\ex_{j-1}}A^{\ex_j'}A^{\ex_k''}A^{\ex_{k+1}}\cdots A^{\ex_n}. \]
Now, note that the expected value of all the terms in (\ref{e.dA1}) is $0$ by (\ref{e.mIto1}) and (\ref{e.dA0}).  Therefore, taking $\E\tr$ in (\ref{e.dA1}) and (\ref{e.dA2}), we have
\[ \frac{d}{dt}\E\tr(A^\ex) = \sum_{1\le j<k\le n} (s\pm r)\E\left[\tr(A^{\ex_j''}A^{\ex_{j+1}}\cdots A^{\ex_{k-1}}A^{\ex_k'}) \tr(A^{\ex_1}\cdots A^{\ex_{j-1}}A^{\ex_j'}A^{\ex_k''}A^{\ex_{k+1}}\cdots A^{\ex_n})\right]. \]
It is possible for one of the two trace terms to be trivial, in two special cases.
\begin{itemize}
\item If $j=1$ and $k=n$, and if $\ex_1=\ast$ and $\ex_n=1$, then the first trace term is equal to $\tr(A^\ex)$, while the second one is just $\tr(I_N) = 1$.
\item For $1\le j<n$, if $k=j+1$, and $\ex_j=1$ while $\ex_k=\ast$, then the second trace term is equal to $\tr(A^\ex)$, while the first one is just $\tr(I_N)=1$.
\end{itemize}
In all other $(\ex,j,k)$ configurations, each trace term involves a non-trivial string of length $<n$.  Note that, in both these exceptional cases, the two exponents must be different, and so the factor in front is $s+r$.  We separate out these cases as follows:
\begin{align*} \frac{d}{dt}\E\tr(A^\ex) &= (s+r)\1_{(\ex_1,\ex_n)=(\ast,1)}\E\tr(A^\ex) + (s+r)\sum_{j=1}^{n-1} \1_{(\ex_j,\ex_{j+1})=(1,\ast)} \E\tr(A^\ex) \\
&+ \widetilde{\sum_{1\le j< k\le n}} (s\pm r)\E\left[\tr(A^{\ex_j''}A^{\ex_{j+1}}\cdots A^{\ex_{k-1}}A^{\ex_k'}) \tr(A^{\ex_1}\cdots A^{\ex_{j-1}}A^{\ex_j'}A^{\ex_k''}A^{\ex_{k+1}}\cdots A^{\ex_n})\right],
\end{align*}
where $\widetilde{\sum}$ indicates that the sum excludes the at-most-$n$ terms accounted for in the special cases.  Define
\[ \kappa(\ex) = \1_{(\ex_1,\ex_n)=(\ast,1)} + \sum_{j=1}^{n-1} \1_{(\ex_j,\ex_{j+1})=(1,\ast)},\]
and let 
\[ \ex_{j,k}^1 = (\ex_j'',\ldots,\ex_k'), \qquad \ex_{j,k}^2 = (\ex_1,\ldots,\ex_j',\ex_k'',\ldots,\ex_n). \]
Thus we have shown that $\E\tr(A^\ex)$ satisfies the ODE
\begin{equation} \label{e.ODEA} \frac{d}{dt}\E\tr(A^\ex) = \kappa(\ex)(s+r)\E\tr(A^\ex) + \widetilde{\sum_{1\le j<k\le n}} (s\pm r)\E\left[\tr(A^{\ex^1_{j,k}})\tr(A^{\ex^2_{j,k}})\right], \end{equation}
where all the terms in the sum are expectations of products of traces of words in $A$ and $A^\ast$ of length {\em strictly less} than $n$. Since $A(0)=I_N$, the unique solution of this ODE (in terms of these functions in the sum) is
\begin{equation} \label{e.ODEA0}
\E\tr(A_T^\ex) = e^{\kappa(\ex)(s+r)T} + \widetilde{\sum_{1\le j<k\le n}} (s\pm r)\int_0^T e^{\kappa(\ex)(s+r)(T-t)} \E\left[\tr(A_t^{\ex^1_{j,k}})\tr(A_t^{\ex^2_{j,k}})\right]\,dt
\end{equation}
where we have written $A_t = A^N_{r,s}(t)$ to emphasize the different times of evaluation.  Now returning to $B_t = B^N_{r,s}(t) = e^{-\frac12(r-s)t}A_t$, and noting that the total length of the two strings $\ex^1_{j,k}$ and $\ex^2_{j,k}$ is $n$, the same as the length of $\ex$, this gives
\begin{align} \nonumber 
\E\tr(B_T^\ex) &= e^{[\kappa(\ex)(s+r) -\frac{n}{2}(r-s)]T} \\ \label{e.ODEB0}
&+  \widetilde{\sum_{1\le j<k\le n}} (s\pm r)\int_0^T e^{[\kappa(\ex)(s+r)-\frac{n}{2}(r-s)](T-t)}e^{\frac{n}{2}(r-s)t} \E\left[\tr(B_t^{\ex^1_{j,k}})\tr(B_t^{\ex^2_{j,k}})\right]\,dt.
\end{align}

Now, repeating this deviation line-by-line, we find that, setting $b_t = b_{r,s}(t)$,
\begin{align} \nonumber 
\t(b_T^\ex) &= e^{[\kappa(\ex)(s+r) -\frac{n}{2}(r-s)]T} \\ \label{e.ODEb0}
&+  \widetilde{\sum_{1\le j<k\le n}} (s\pm r)\int_0^T e^{[\kappa(\ex)(s+r)-\frac{n}{2}(r-s)](T-t)}e^{\frac{n}{2}(r-s)t} \t(b_t^{\ex^1_{j,k}})\t(b_t^{\ex^2_{j,k}})\,dt.
\end{align}
The principal difference is that, when applying the free It\^o product rule (\ref{e.fItoprod}), the trace $\t$ factors through completely, while in the matrix It\^o product rule (\ref{e.mItoprod}), only the trace $\tr$ factors through, while the expectation $\E$ does not.  Thus, the desired quantity (on the left-hand-side of (\ref{e.conv-t})) at time $T$ is equal to
\begin{equation} \label{e.B-b} \widetilde{\sum_{1\le j<k\le n}} (s\pm r)\int_0^T e^{[\kappa(\ex)(s+r)-\frac{n}{2}(r-s)](T-t)}e^{\frac{n}{2}(r-s)t} \left( \E\left[\tr(B_t^{\ex^1_{j,k}})\tr(B_t^{\ex^2_{j,k}})\right]-  \t(b_t^{\ex^1_{j,k}})\t(b_t^{\ex^2_{j,k}})\right)\,dt. \end{equation}
Again to simplify notation, fix $j,k$ in the sum and let $B^\ell = B^{\ex^\ell_{j,k}}_t$ and $b^\ell = b^{\ex^\ell_{j,k}}_t$ for $\ell=1,2$.  Then we expand the difference as
\begin{equation} \label{e.cov2} \E[\tr(B^1)\tr(B^2)]-\t(b^1)\t(b^2) = \Cov[\tr(B^1),\tr(B^2)] + \E\tr(B^1)\E\tr(B^2)-\t(b^1)\t(b^2), \end{equation}
and the last two terms may be written (by adding and subtracting $\t(b^1)\E\tr(B^2)$) as
\begin{equation} \label{e.cocov} \E\tr(B^1)\E\tr(B^2)-\t(b^1)\t(b^2) = \E\tr(B^2)\cdot[\E\tr(B^1)-\t(b^1)] + \t(b^1)\cdot[\E\tr(B^2)-\t(b^2)]. \end{equation}

We now appeal to the inductive hypothesis.  By construction, all the terms in the sum $\widetilde{\sum}$ have both strings $\ex_{j,k}^1$ and $\ex_{j,k}^2$ of length {\em strictly} $<n$.  As such, the inductive hypothesis yields that $|\E\tr(B^\ell)-\t(b^\ell)| \le C^\ell(t)/N^2$ for constants $C^\ell(t)$ that depend continuously on $t$ (and all of the hidden parameters $r,s,\ex$).  It follows, in particular, that the constants $\E\tr(B^2)$ are uniformly bounded in $N$ and $t\in[0,T]$.  Thus, the terms in (\ref{e.cocov}) are bounded by $C(t)/N^2$ for some constant $C(t)$ that is uniformly bounded in $t\in[0,T]$.  By (\ref{e.var1}), the covariance term in (\ref{e.cov2}) is also bounded by $C'(t)/N^2$ for such a constant $C'(t)$. Integrating $C(t)+C'(t)$ times the relevant exponentials, summed over $j,k$, in (\ref{e.B-b}) now shows that the whole expression is $\le C''(T)/N^2$ for some constant $C''(T)$ that depends continuously on $T$.  This concludes the proof.
\end{proof}

\begin{remark} In \cite[Theorem 1.6]{Kemp2013a}, the author showed that there exists a linear functional $\p_{r,s}^t\colon \C\langle X,X^\ast\rangle\to\C$ so that (\ref{e.conv-t}) holds with $\p_{r,s}^t(X^{\ex_1}\cdots X^{\ex_n})$ in place of $\t(b_{r,s}(t)^{\ex_1}\cdots b_{r,s}(t)^{\ex_n})$; the upshot of the present theorem is to identify this linear functional as the noncommutative distribution of $b_{r,s}(t)$.  In particular, it lives in a faithful, normal, tracial $W^\ast$-probability space, which could not be easily proved using the techniques in \cite{Kemp2013a}.
\end{remark}


\subsection{Asymptotic Freeness and Convergence of the Process \label{subsection Asymp Freeness}} 

In this final section, we use the freeness of the increments of $b_{r,s}(t)$ and the asymptotic freeness of the increments of $B^N_{r,s}(t)$, together with Theorem \ref{t.main-fixed-t}, to prove Theorem \ref{t.main}.  We begin with some preliminary lemmas.

\begin{lemma} \label{l.time1} Let $\ex_1,\ldots,\ex_n\in\{1,\ast\}$, and let $f\in\C\langle X_1,\ldots,X_n\rangle$ be a noncommutative polynomial.  Given any permutation $\sigma\in\Sigma_n$, there is a noncommutative polynomial $g\in\C\langle X_1,\ldots,X_n,X_1^\ast,\ldots,X_n^\ast\rangle$ with the following property.  If $b_1,\ldots,b_n$ are invertible random variables in a noncommutative probability space, and $a_1=b_1, a_2 = b_1^{-1}b_2, \ldots, a_n = b_{n-1}^{-1}b_n$ are the corresponding multiplicative increments, then
\[ f(b_{\sigma(1)}^{\ex_1},\ldots,b_{\sigma(n)}^{\ex_n}) = g(a_1,\ldots,a_n,a_1^\ast,\ldots,a_n^\ast). \]
\end{lemma}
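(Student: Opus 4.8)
The plan is to build $g$ explicitly by substitution. First I would rewrite each $b_{\sigma(j)}$ as a word in the increments: since $b_1,\ldots,b_n$ are invertible and the increments are defined by $a_1=b_1$ and $a_k=b_{k-1}^{-1}b_k$ for $2\le k\le n$, we have $b_k=b_{k-1}a_k$, hence by induction $b_k=a_1a_2\cdots a_k$ for every $1\le k\le n$, and taking adjoints $b_k^\ast=a_k^\ast a_{k-1}^\ast\cdots a_1^\ast$. Therefore, for each $j$,
\[ b_{\sigma(j)}^{\ex_j}=a_1a_2\cdots a_{\sigma(j)}\ \text{ if }\ \ex_j=1,\qquad b_{\sigma(j)}^{\ex_j}=a_{\sigma(j)}^\ast a_{\sigma(j)-1}^\ast\cdots a_1^\ast\ \text{ if }\ \ex_j=\ast; \]
in either case $b_{\sigma(j)}^{\ex_j}$ is a monomial in the $2n$ letters $a_1,\ldots,a_n,a_1^\ast,\ldots,a_n^\ast$, and which monomial it is depends only on $\sigma$ and $\ex$.

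Next I would package these monomials as polynomials and substitute them into $f$. For $1\le j\le n$ let $P_j\in\C\langle X_1,\ldots,X_n,X_1^\ast,\ldots,X_n^\ast\rangle$ be the monomial $X_1X_2\cdots X_{\sigma(j)}$ when $\ex_j=1$ and $X_{\sigma(j)}^\ast X_{\sigma(j)-1}^\ast\cdots X_1^\ast$ when $\ex_j=\ast$. Since $\C\langle X_1,\ldots,X_n\rangle$ is free on the generators $X_1,\ldots,X_n$, the assignment $X_j\mapsto P_j$ extends uniquely to a unital algebra homomorphism $\Phi\colon\C\langle X_1,\ldots,X_n\rangle\to\C\langle X_1,\ldots,X_n,X_1^\ast,\ldots,X_n^\ast\rangle$, and I would set $g\equiv\Phi(f)=f(P_1,\ldots,P_n)$. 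To verify the required identity, let $\mathrm{ev}$ be the $\ast$-algebra homomorphism from $\C\langle X_1,\ldots,X_n,X_1^\ast,\ldots,X_n^\ast\rangle$ into the ambient algebra sending $X_k\mapsto a_k$; by the first step $\mathrm{ev}(P_j)=b_{\sigma(j)}^{\ex_j}$, so $\mathrm{ev}\circ\Phi$ is the unique homomorphism on $\C\langle X_1,\ldots,X_n\rangle$ sending $X_j\mapsto b_{\sigma(j)}^{\ex_j}$, i.e.\ $(\mathrm{ev}\circ\Phi)(f)=f(b_{\sigma(1)}^{\ex_1},\ldots,b_{\sigma(n)}^{\ex_n})$. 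Since also $(\mathrm{ev}\circ\Phi)(f)=\mathrm{ev}(g)=g(a_1,\ldots,a_n,a_1^\ast,\ldots,a_n^\ast)$, the lemma follows.

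There is no substantive obstacle here; the content is just the universal property of the free algebra together with the relation $b_k=a_1\cdots a_k$. The only points deserving care are the reversal of factor order under the adjoint in the case $\ex_j=\ast$, and the fact that $\sigma$ may carry $j$ to any index in $\{1,\ldots,n\}$, so that the words $P_j$ may have any length from $1$ to $n$. It is worth recording, for later use in the proof of Theorem~\ref{t.main}, that the polynomial $g$ produced this way is determined combinatorially by $f$, $\sigma$ and $\ex$ alone, independently of the noncommutative probability space and of the particular elements $b_1,\ldots,b_n$.
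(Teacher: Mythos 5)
Your proof is correct and follows essentially the same route as the paper: both rest on the telescoping identity $b_k = a_1a_2\cdots a_k$ and then substitute the resulting monomials (and their adjoints, for $\ex_j=\ast$) into $f$ to produce $g$. The only cosmetic difference is that the paper first reindexes $f$ by $\sigma$ (introducing $f_\sigma$) before expanding $b_j^{\ex_{\sigma^{-1}(j)}}$, whereas you expand $b_{\sigma(j)}^{\ex_j}$ directly and invoke the universal property of the free algebra explicitly; the content is identical.
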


\begin{proof} For $1\le j\le n$, write
\begin{equation} \label{e.incr1} b_j = b_1(b_1^{-1}b_2)\cdots(b_{j-1}^{-1}b_j) = a_1a_2\cdots a_j. \end{equation}
Let $f_\sigma(X_1,\ldots,X_n) = f(X_{\sigma(1)},\ldots,X_{\sigma(n)})$; then
\[ f(b_{\sigma(1)}^{\ex_1},\cdots,b_{\sigma(n)}^{\ex_n}) = f_\sigma(b_1^{\ex_{\sigma^{-1}(1)}},\ldots,b_n^{\ex_{\sigma^{-1}(n)}}). \]
In each variable, expand the term $b_j^{\ex_{\sigma^{-1}(j)}}$ using (\ref{e.incr1}) (to the $\ex_{\sigma^{-1}(j)}$ power); this yields the polynomial $g$.
\end{proof}

The next lemma uses the language of Section \ref{section Trace Polynomials} to give a more precise formulation of how free independence reduces the calculation of joint moments to separate moments.

\begin{lemma} \label{l.freeness} Given any $n\in\N$ and any noncommutative polynomial $g\in\C\langle X_1,\ldots,X_n,X_1^\ast,\ldots,X_n^\ast\rangle$, there is an $m\in\N$ and a collection $\{P^{j,k}\colon 1\le j\le n, 1\le k\le m\}$ of elements of $\PP$ with the property that, if $(\A,\t)$ is a noncommutative probability space, and $a_1,\ldots,a_n\in\A$ are freely independent, then
\begin{equation} \label{e.freeness2} \t(g(a_1,\ldots,a_n,a_1^\ast,\ldots,a_n^\ast)) =  \sum_{k=1}^m P^{1,k}_\t(a_1)\cdots P^{n,k}_\t(a_n). \end{equation}
\end{lemma}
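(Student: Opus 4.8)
The plan is to reduce to monomials by $\C$-linearity and then to induct on the length of the monomial, using the centering trick together with Definition~\ref{d.free} (freeness annihilates alternating products of centered elements of the separate subalgebras). By linearity it suffices to treat the case in which $g$ is a single word, $g(X_1,\ldots,X_n,X_1^\ast,\ldots,X_n^\ast)=X_{i_1}^{\ex_1}\cdots X_{i_\ell}^{\ex_\ell}$ with $i_1,\ldots,i_\ell\in\{1,\ldots,n\}$ and $\ex_1,\ldots,\ex_\ell\in\{1,\ast\}$; write $w=a_{i_1}^{\ex_1}\cdots a_{i_\ell}^{\ex_\ell}$ for its evaluation. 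The point to keep in mind is that any trace of a word in a single variable $a_j$ and $a_j^\ast$ is the evaluation at $a_j$ of a monomial $v_\ex\in\PP$, and a product of such traces is the evaluation of a product of such monomials, i.e.\ of an element of $\PP$; so the right-hand side of \eqref{e.freeness2} is nothing but a $\C$-linear combination of products, each having exactly one factor — a trace polynomial in that variable alone — for each of $a_1,\ldots,a_n$. I will call such an expression \emph{separated}.

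I would argue by induction on $\ell$; the cases $\ell=0,1$ are immediate. For the inductive step, group $w$ into its maximal runs of equal index, $w=u_1u_2\cdots u_r$, where each $u_t$ is a nonempty word in a single variable $a_{j_t}$ and $j_t\ne j_{t+1}$ for $1\le t<r$. If $r=1$ then $\t(w)$ is already the evaluation of a single-variable trace polynomial and we are done, so assume $r\ge 2$. Set $\mu_t:=\t(u_t)$ — a scalar which, as noted, equals $[v_{\ex^{(t)}}]_\t(a_{j_t})$ for a suitable $\ex^{(t)}$ — and write $u_t=u_t^{\circ}+\mu_t 1$ with $u_t^{\circ}$ centered and lying in the unital $\ast$-algebra generated by $a_{j_t}$. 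Expanding the product and using linearity of $\t$,
\[
\t(w)=\sum_{S\subseteq\{1,\ldots,r\}}\Bigl(\prod_{t\notin S}\mu_t\Bigr)\,\t\bigl(\textstyle\prod_{t\in S}u_t^{\circ}\bigr),
\]
the inner products taken in increasing order of $t$.

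The term $S=\{1,\ldots,r\}$ vanishes, because $u_1^{\circ}\cdots u_r^{\circ}$ is an alternating product (consecutive indices distinct, by maximality of the runs) of centered elements of the free subalgebras generated by $a_{j_1},\ldots,a_{j_r}$, so its trace is $0$ by Definition~\ref{d.free}. For every proper $S\subsetneq\{1,\ldots,r\}$, expanding $u_t^{\circ}=u_t-\mu_t1$ back inside the remaining trace rewrites $\t\bigl(\prod_{t\in S}u_t^{\circ}\bigr)$ as a $\C$-linear combination of terms $\bigl(\prod_{t\in S\setminus S'}\mu_t\bigr)\,\t\bigl(\prod_{t\in S'}u_t\bigr)$ over $S'\subseteq S$, and since $S$ omits at least one run (of length $\ge 1$), each word $\prod_{t\in S'}u_t$ has length $\le\ell-1$; by the inductive hypothesis each $\t\bigl(\prod_{t\in S'}u_t\bigr)$ is separated. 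Multiplying by the accumulated scalars $\mu_t=[v_{\ex^{(t)}}]_\t(a_{j_t})$ — each a single-variable trace polynomial, hence absorbable by multiplication in $\PP$ into the $a_{j_t}$-factor of each separated term — and summing the finitely many contributions, one stays separated. The collection $\{P^{j,k}\}$ so produced is built purely combinatorially from the word $w$ (the various $S$, $S'$, signs, and rearrangements), hence does not depend on $(\A,\t)$ or on the $a_j$, and \eqref{e.freeness2} follows. (Alternatively and more briefly: expand $\t(w)$ over non-crossing partitions via the moment--cumulant formula, discard the partitions having a non-monochromatic block by vanishing of mixed free cumulants, group the surviving blocks by colour, and convert each monochromatic block-product of cumulants back into moments of a single variable; this is faster but imports the free cumulant machinery of \cite{NicaSpeicherBook}.)

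The main obstacle is purely organizational bookkeeping: one must verify that the fully-centered term is genuinely annihilated by freeness, that every other term really does collapse to strictly shorter words so that the induction closes, and — the fussiest point — that the separated structure (exactly one trace-polynomial factor per variable) survives the repeated scalar multiplications and sums the recursion generates, since the leftover scalars must first be recognized as single-variable trace polynomials before they can be reabsorbed.
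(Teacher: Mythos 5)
Your proof is correct and follows the same route the paper gestures at: the paper does not supply a detailed argument but instead cites \cite[Lemma 5.13]{NicaSpeicherBook} and summarizes the idea as ``center the variables and proceed inductively,'' which is precisely the decomposition into maximal runs, centering $u_t = u_t^\circ + \mu_t 1$, vanishing of the fully alternating centered term by Definition~\ref{d.free}, and induction on word length that you carry out. Your alternative via the moment--cumulant formula is also sound and matches the paper's comment that the explicit computation of the $P^{j,k}$ belongs to the free-cumulant machinery of \cite{NicaSpeicherBook}.
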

\noindent Here $\PP$ denotes the polynomial space $\PP(J)$ with the index set $J$ a singleton.  The proof of Lemma \ref{l.freeness} is contained in the proof of \cite[Lemma 5.13]{NicaSpeicherBook}.  The idea is to center the variables and proceed inductively.  The exact machinery of how $P^{j,k}$ are computed from $g$ is the business of the rich theory of free cumulants, which is the primary topic of the monograph \cite{NicaSpeicherBook}.

Now, suppose $A_1^N,\ldots,A_n^N$ are $N\times N$ random matrices that are asymptotically free; cf.\ Definition \ref{d.asympfree}.  This means precisely that $(A_1^N,\ldots,A_n^N)\to (a_1,\ldots,a_n)$ in noncommutative distribution, for some freely independent collection $a_1,\ldots,a_n$ in a noncommutative probability space $(\A,\t)$.  In other words, for any noncommutative polynomial $g\in\C\langle X_1,\ldots,X_n,X_1^\ast,\ldots,X_n^\ast\rangle$,
\begin{align*} \lim_{N\to\infty} \E\tr\!\left(g(A_1^N,\ldots,A_n^N,(A_1^N)^\ast,\ldots,(A_n^N)^\ast)\right) &= \t(g(a_1,\ldots,a_n,a_1^\ast,\ldots,a_n^\ast)) \\
&= \sum_{k=1}^m P^{1,k}_\t(a_1)\cdots P^{n,k}_\t(a_n)
\end{align*}
where the second equality is Lemma \ref{l.freeness}.  Note that $P^{j,k}_\t(a)$ is a polynomial in the trace moments of $a,a^\ast$, and by assumption of convergence of the joint distribution, we also therefore have $(P^{j,k}_{\E\tr}(A^N_j))\to P^{j,k}_\t(a_j)$ as $N\to\infty$.  Hence, we can alternatively state asymptotic freeness as
\begin{equation}\label{e.asympfree3} \lim_{N\to\infty} \E\tr\!\left(g(A_1^N,\ldots,A_n^N,(A_1^N)^\ast,\ldots,(A_n^N)^\ast)\right) = \lim_{N\to\infty}\sum_{k=1}^m P^{1,k}_{\E\tr}(A_1^N)\cdots P^{n,k}_{\E\tr}(A^N_n). \end{equation}

We now stand ready to prove Theorem \ref{t.main}.

\begin{proof}[Proof of \ref{t.main}] For convenience, denote $B^N_{r,s}(t) = B_t$ and $b_{r,s}(t) = b_t$.  Fix $t_1,\ldots,t_n\ge 0$ and $\ex_1,\ldots,\ex_n\in\{1,\ast\}$.  Fix a permutation $\sigma\in\Sigma_n$ such that $t_{\sigma(1)}\le \cdots \le t_{\sigma(n)}$ and let   $t_j' = t_{\sigma(j)}$. Let
\[ A_1 = B_{t_1'}, \; A_2 = B_{t_1'}^{-1}B_{t_2},\; \ldots,\; A_n = B_{t_{n-1}'}^{-1}B_{t_n}\]
be the increments for the partition $t_1'\le\cdots\le t_n'$.  Using Lemma \ref{l.time1}, we can write
\begin{equation} \label{e.final1} \E\tr\!\left(B_{t_1}^{\ex_1} \cdots B_{t_n}^{\ex_n}\right) = \E\tr(g(A_1,\ldots,A_n,A_1^\ast,\ldots,A_n^\ast)) \end{equation}
where $g\in\C\langle X_1,\ldots,X_n,X_1^\ast,\ldots,X_n^\ast\rangle$ is determined by $\sigma$ and $\ex_1,\ldots,\ex_n$.

By Proposition \ref{p.B2}, the increments $A_j$ are independent; moreover, their stationarity means that $A_j$ has the same distribution as $B_{\Delta t_j'}$ where $\Delta t_1' = t_1'$ and $\Delta t_j' = t_j'-t_{j-1}'$ for $1<j\le n$.  Thus, by Corollary \ref{c.asympfree2}, $A_1,\ldots,A_n$ are asymptotically free.  In addition, the equality of distributions means that all $\ast$-moments of $A_j$ are equal to the same $\ast$-moments of $B_{\Delta t_j'}$.  Thus, combining (\ref{e.asympfree3}) and (\ref{e.final1}), we have
\[ \lim_{N\to\infty}\E\tr(B_{t_1}^{\ex_1}\cdots B_{t_n}^{\ex_n}) = \lim_{N\to\infty}\sum_{k=1}^m P^{1,k}_{\E\tr}(B_{\Delta t_1'})\cdots P^{n,k}_{\E\tr}(B_{\Delta t_n'}). \]
From Theorem \ref{t.main-fixed-t}, we therefore have
\[ \lim_{N\to\infty}\E\tr(B_{t_1}^{\ex_1}\cdots B_{t_n}^{\ex_n}) = \sum_{k=1}^m P^{1,k}_\t(b_{\Delta t_1'})\cdots P^{n,k}_\t(b_{\Delta t_n'}). \]
Now, by Proposition \ref{p.b2}, the increments $b_{\Delta t_j'}$ are freely independent and stationary; so letting
\[ a_1 = b_{t'_1}, \; a_2 = b_{t'_1}^{-1}b_{t'_2}, \; \ldots, \; a_n = b_{t'_{n-1}}^{-1}b_{t'_n} \]
we see that $\{b_{\Delta t_1'},\ldots,b_{\Delta t_n'}\}$ have the same joint distribution as $\{a_1,\ldots,a_n\}$.  Thus
\[ \lim_{N\to\infty}\E\tr(B_{t_1}^{\ex_1}\cdots B_{t_n}^{\ex_n}) = \sum_{k=1}^m P^{1,k}_\t(b_{\Delta t_1'})\cdots P^{n,k}_\t(b_{\Delta t_n'}) = \sum_{k=1}^m P^{1,k}_\t(a_1)\cdots P^{n,k}_\t(a_n), \]
and by the definition (\ref{e.freeness2}) of $P^{j,k}$, this yields
\[ \lim_{N\to\infty}\E\tr(B_{t_1}^{\ex_1}\cdots B_{t_n}^{\ex_n}) = \t(g(a_1,\ldots,a_n,a_1^\ast,\ldots,a_n^\ast)). \]
Finally, by the definition (\ref{e.final1}) of $g$, we conclude that
\[ \lim_{N\to\infty}\E\tr(B_{t_1}^{\ex_1}\cdots B_{t_n}^{\ex_n}) = \t(b_{t_1}^{\ex_1}\cdots b_{t_n}^{\ex_n}), \]
concluding the proof.  \end{proof}

\subsection*{Acknowledgments} The author wishes to thank Philippe Biane, Bruce Driver, Pat Fitzsimmons, and Jamie Mingo for useful conversations.
\bibliographystyle{acm}
\bibliography{IBM}

\end{document}